\newcommand{\Rbb}{\mathbb{R}}
\newcommand{\Nbb}{\mathbb{N}}
\newcommand{\Fcal}{\mathcal{F}}
\newcommand{\Ical}{\mathcal{I}}
\newcommand{\Mcal}{\mathcal{M}}
\newcommand{\Ocal}{\mathcal{O}}
\newcommand{\Vcal}{\mathcal{V}}
\newcommand{\Xcal}{\mathcal{X}}
\newcommand{\Zcal}{\mathcal{Z}}
\DeclareMathOperator*{\argmax}{arg\,max}
\DeclareMathOperator*{\argmin}{arg\,min}
\newcommand \al[1]{\begin{align*}
#1
\end{align*}
}
\newcommand \eqv{\Leftrightarrow}
\newcommand \bra{\left\langle}
\newcommand \ket{\right\rangle}
\newcommand \braket[2]{\bra #1, #2 \ket}
\newcommand \parenth[1]{\left( #1 \right)}
\tiny\color{gray},
\newtheorem{definition}{Definition}[section]
\newtheorem*{definition*}{Definition}
\newtheorem{lemma}{Lemma}
\newtheorem*{lemma*}{Lemma}
\newtheorem{theorem}{Theorem}
\newtheorem{remark}{Remark}
\newtheorem*{fact*}{Fact}
\newtheorem{proposition}{Proposition}
\newtheorem*{proposition*}{Proposition}
\newtheorem{corollary}{Corollary}
\newtheorem{assumption}{Assumption}
\DeclareMathOperator{\MD}{MD}
\DeclareMathOperator{\AMD}{AMD}
\DeclareMathOperator{\SMD}{SMD}
\DeclareMathOperator{\SAMD}{SAMD}
\DeclareMathOperator{\tr}{tr}
\newcommand\Ebb{\mathbb{E}}
\title{Acceleration and Averaging\\In Stochastic Mirror Descent Dynamics}
\author{
Walid Krichene\thanks{Research and Machine Intelligence, Google Inc. \texttt{walidk@google.com}}
\and
Peter Bartlett\thanks{Department of Statistics and Computer Science division, University of California at Berkeley, and Queensland University of Technology. \texttt{bartlett@cs.berkeley.edu}
}}
\begin{document}

\maketitle

\begin{abstract}
We formulate and study a general family of (continuous-time) stochastic dynamics for accelerated first-order minimization of smooth convex functions.

Building on an averaging formulation of accelerated mirror descent, we propose a stochastic variant in which the gradient is contaminated by noise, and study the resulting stochastic differential equation. We prove a bound on the rate of change of an energy function associated with the problem, then use it to derive estimates of convergence rates of the function values, (a.s. and in expectation) both for persistent and asymptotically vanishing noise. We discuss the interaction between the parameters of the dynamics (learning rate and averaging weights) and the covariation of the noise process, and show, in particular, how the asymptotic rate of covariation affects the choice of parameters and, ultimately, the convergence rate.
\end{abstract}

\section{Introduction}
\label{sec:intro}
We consider the constrained convex minimization problem
\[
\min_{x \in \Xcal} f(x),
\]
where $\Xcal$ is a closed, convex, subset of $E = \Rbb^n$, and $f$ is a proper closed convex function, assumed to be differentiable with Lipschitz gradient, and we will denote $\Xcal^\star$ the set of minimizers of this problem, assumed to be non-empty. First-order optimization methods play an important role in minimizing such functions, in particular in large-scale machine learning applications, in which the dimensionality (number of features) and size (number of samples) in typical datasets makes higher-order methods intractable. 
Many such algorithms can be viewed as a discretization of a continuous-time dynamics. The simplest example is gradient descent, which can be viewed as the discretization of the gradient flow dynamics $\dot x(t) = -\nabla f(x(t))$, where $\dot x(t)$ denotes the time derivative of a $C^1$ trajectory $x(t)$. An important generalization of gradient descent, which elegantly handles the constraint set $\Xcal$, was developed by~\cite{nemirovski1983problem}, and termed mirror descent: it couples a dual variable $z(t)$ accumulating gradients, and its ``mirror'' primal variable~$x(t)$. More specifically, the dynamics are given by
\begin{equation}
\label{eq:ODE_MD}
\MD\begin{cases}
\dot z(t) = -\nabla f(x(t)) \\
x(t) = \nabla \psi^*(z(t)),
\end{cases}
\end{equation}
where $\nabla \psi^*: E^* \to \Xcal$ is a Lipschitz function defined on the entire dual space $E^*$, with values in the feasible set $\Xcal$; it is often referred to as a mirror map, and we will recall its definition and properties in Section~\ref{sec:smd}. Mirror descent can be viewed as a generalization of projected gradient descent, where the Euclidean projection is replaced by the mirror map $\nabla \psi^*$~\citep{beck2003mirror}. This makes it possible to adapt the choice of the mirror map to the particular geometry of the problem, leading to closed-form solutions of the projection, or to better dependence on the dimension $n$, see~\citep{bental2001Lectures}, \citep{bental2001oredered}.

\subsection*{Continuous-time dynamics}
Although optimization methods are inherently discrete, the continuous-time point of view can help in their design and analysis, since it can leverage the rich literature on dynamical systems, control theory, and mechanics, see~\citep{helmke1994optimization}, \citep{bloch1994hamiltonian}, and the references therein. Continuous-time models are also commonly used in financial applications, such as option pricing~\citep{black1973pricing}, even though the actions are taken in discrete time. In convex optimization, beyond simplifying the analysis, continuous-time models have also motivated new algorithms and heuristics: mirror descent is one such example, since it was originally motivated in continuous-time (Chapter 3 in~\citep{nemirovski1983problem}). In a more recent line of work (\citep{su2014differential}, \citep{krichene2015amd}, \citep{wibisono2016variational}), Nesterov's accelerated method~\citep{nesterov1983method} was shown to be the discretization of a second-order ordinary differential equation (ODE), which, in the unconstrained case, can be interpreted as a damped non-linear oscillator~\citep{cabot2009dissipation, attouch2015fast}. This motivated a restarting heuristic \citep{ODonoghue2015adaptive}, which aims at further dissipating the energy of the oscillator. \cite{krichene2015amd} generalized this ODE to mirror descent dynamics, and gave an averaging interpretation (a connection which was previously pointed out by~\cite{flammarion2015colt} for quadratic functions). This averaging formulation is the starting point of this paper, in which we introduce and study a stochastic variant of accelerated mirror descent dynamics.

\subsection*{Stochastic dynamics and related work}
The dynamics which we discussed so far (gradient descent, mirror descent, and their accelerated variants) are \emph{deterministic} first-order dynamics, since they use the exact gradient $\nabla f$. However, in many machine learning applications, evaluating the exact gradient $\nabla f$ can be prohibitively expensive, e.g. when the objective function $f$ involves the sum of loss functions over a training set, of the form $f(x) = \frac{1}{|\Ical|}\sum_{i \in \Ical} f_i(x) + g(x)$, where $\Ical$ indexes the training samples, and $g$ is a regularization function. Instead of computing the exact gradient $\nabla f(x) = \frac{1}{|\Ical|}\sum_{i \in \Ical} \nabla f_i(x) + \nabla g(x)$, a common approach is to compute an unbiased, stochastic estimate of the gradient, given by $\frac{1}{|\tilde\Ical|} \sum_{i \in \tilde \Ical} \nabla f_i(x) + \nabla g(x)$, where $\tilde I$ is a uniformly random subset of $\Ical$, indexing a random batch of samples from the training set. This approach motivates the study of stochastic dynamics for convex optimization. But despite an extensive literature on stochastic gradient and mirror descent in discrete time, e.g. \citep{nemirovski2009robust}, \citep{duchi2012ergodic}, \citep{lan2012optimal}, \citep{johnson2013SVRG},  \citep{xiao2014progressive}, and many others, few results are known for stochastic mirror descent in continuous-time. To the best of our knowledge, the only published results are by \cite{raginsky2012continuous} and \cite{mertikopoulos2016convergence}.

In its simplest form, the stochastic gradient flow dynamics can be described by the It\^o stochastic differential equation (SDE) \citep{oksendal2003stochastic}
\[
dX(t) = -\nabla f(X(t)) + \sigma dB(t),
\]
where $B(t)$ denotes a standard Wiener process (Brownian motion). It is well known that this dynamics admits a unique invariant measure with density proportional to the Gibbs distribution $e^{-\frac{2f(x)}{\sigma}}$. Such dynamics have recently played an important role in the analysis of sampling methods~\citep{dalalyan2017sampling}, \citep{bubeck2015langevin}, \citep{cheng2017convergence}, \citep{cheng2017underdamped}, where $f$ is taken to be the logarithm of a target distribution $p$. The stationary distribution of the SDE has also been recently interpreted as an approximate Bayesian inference~\citep{mandt2017SGD}, and to derive convergence rates (in expectation) for smooth, non-convex optimization where the objective is dissipative~\citep{raginsky2017nonconvex}.

For mirror descent dynamics, \cite{raginsky2012continuous} were the first to propose a stochastic variant of the mirror descent ODE~\eqref{eq:ODE_MD}, given by the It\^o SDE:
\begin{equation}
\label{eq:ODE_SMD}
\SMD \begin{cases}
dZ(t) = -\nabla f(X(t)) + \sigma dB(t) \\
X(t) = \nabla \psi^*(Z(t))
\end{cases}
\end{equation}
where $\sigma$ is a constant volatility. In particular, they argued that the function values $f(X(t))$ along sample trajectories do not converge to the minimum value of $f$ due to the persistent noise, but the optimality gap is bounded by a quantity proportional to $\sigma^2$. They also proposed a method to reduce the variance by simultaneously sampling multiple trajectories and linearly coupling them. \cite{mertikopoulos2016convergence} extended the analysis in some important directions: they replaced the constant volatility $\sigma$ with a general volatility matrix $\sigma(x, t)$ which can depend on the current point $x$ and current time $t$, and studied two regimes: the vanishing noise regime given by the condition $\sup_{x \in \Xcal} \sigma(x, t) = o(1/\sqrt{ \log t})$, in which case they prove almost sure convergence of solution trajectories; and the persistent noise regime, given by the condition $\sup_{x \in \Xcal} \sigma(x, t) \leq \sigma_*$ uniformly in $t$, in which case they define a rectified variant of $\SMD$, obtained by replacing the second equation by $X(t) = \nabla \psi^*(Z(t) / s(t))$, where $1/s(t)$ is a sensitivity parameter. The resulting dynamics is given by
\begin{equation}
\label{eq:SMD}
\SMD_s
\begin{cases}
\dot Z(t) = -\nabla f(X(t)) + \sigma(X(t), t) dB(t), \\
X(t) = \nabla \psi^*(Z(t)/s(t)).
\end{cases}
\end{equation}

Intuitively, a decreasing sensitivity reduces the impact of accumulated noise on the primal trajectory. In particular, they prove that with $1/s(t) = 1/\sqrt t$, the function values converge to the optimal value at a $\Ocal(\sqrt{ \log \log t / t})$ rate, almost surely. They also give concentration estimates around interior solutions in the strongly convex case. While these recent results paint a broad picture of mirror descent dynamics under different noise regimes, they leave many questions open: in particular, they do not provide estimates for convergence rates in the vanishing noise regime, which is an important regime in machine learning applications, since one can often control the variance of the gradient estimate, for example by gradually increasing the batch size, as done by~\cite{xiao2014progressive}. Besides, they do not study accelerated dynamics, and the interaction between acceleration and noise remains unexplored in continuous time.

\subsection*{Our contributions}
In this paper, we answer many of the questions left open in previous works. We formulate and study a family of stochastic accelerated mirror descent dynamics, and we characterize the interaction between its different parameters: the volatility of the noise, the (primal and dual) learning rates, and the sensitivity of the mirror map. More specifically:
\begin{itemize}
\item In Theorem~\ref{thm:as}, we give sufficient conditions for a.s. convergence of solution trajectories to the set of minimizers $\Xcal^\star$. In particular, we show that it is possible to guarantee almost sure convergence even when the volatility is unbounded asymptotically.
\item In Theorem~\ref{thm:rate_exp}, we derive a bound on the expected function values.
\item In Theorem~\ref{thm:rate_as}, we provide estimates of sample trajectory convergence rates.
\end{itemize}

The rest of the paper is organized as follows: We start by reviewing the building blocks of our construction in Section~\ref{sec:smd}, then formulate the stochastic dynamics in Section~\ref{sec:sto}, and prove two instrumental lemmas. Section~\ref{sec:convergence} is dedicated to the convergence results. We give additional numerical examples in Section~\ref{sec:numerics}, to illustrate the effect of acceleration on stochastic mirror descent dynamics. We conclude with a brief discussion in Section~\ref{sec:discussion}.

\section{Accelerated Mirror Descent Dynamics}
\label{sec:smd}

\subsection{Smooth mirror maps}
The mirror map is central in defining mirror descent dynamics. We first give a generic method for constructing mirror maps, adapted to the feasible set $\Xcal \subset E$. We fix a pair of dual reference norms, $\|\cdot\|, \|\cdot\|_*$, defined, respectively, on $E$ and its dual space $E^*$. We say that a map $F: E \to E^*$ is Lipschitz continuous on $\Xcal$ with constant $L$ if for all $x, x' \in \Xcal$, $\|F(x) - F(x')\|_* \leq L \|x - x'\|$. We recall that the effective domain of a convex function $\psi$ is the set $\{x \in E: \psi(x) < \infty\}$, and its convex conjugate $\psi^* : E^* \to \Rbb$ is defined on $E^*$ by $\psi^*(z) = \sup_{x \in \Xcal} \braket{z}{x} - \psi(x)$. We recall that the sub-differential of $\psi$ at $x$ is the set $\partial \psi(x) = \{g \in E^* : \psi(x') \geq \psi(x) + \braket{g}{x'-x} \ \forall x' \in \Xcal\}$, and that $\psi$ is said to be $\mu$-strongly convex (w.r.t. $\|\cdot\|$) if $\forall x, x' \in \Xcal, \ \forall g \in \partial \psi(x)$, $\psi(x) \geq \psi(x') + \braket{g}{x'-x} + \frac{\mu}{2}\|x' - x\|^2$.

\begin{proposition}
\label{prop:mirror_map}
Let $\psi$ be a $\mu$-strongly convex function (w.r.t. $\|\cdot\|$) with effective domain $\Xcal$, and let $\psi^*$ be its convex conjugate. Then $\psi^*$ is finite and differentiable on all of $E^*$, $\nabla \psi^*$ is $\frac{1}{\mu}$-Lipschitz, and has values in $\Xcal$: specifically, for all $z \in E^*$,
\begin{equation}
\label{eq:mirror_solution}
\nabla \psi^*(z) = \argmax_{x \in \Xcal} \braket{z}{x} - \psi(x).
\end{equation}
\end{proposition}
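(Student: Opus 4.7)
The plan is to establish the proposition in four classical steps: (i) show the supremum in the definition of $\psi^*(z)$ is finite and attained at a unique point $x^*(z) \in \Xcal$; (ii) verify that $x^*(z) \in \partial \psi^*(z)$; (iii) deduce that $\partial \psi^*(z)$ is a singleton, so $\psi^*$ is differentiable with $\nabla\psi^*(z) = x^*(z)$; and (iv) upgrade strong convexity of $\psi$ into $\tfrac{1}{\mu}$-Lipschitz continuity of $\nabla\psi^*$.

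For step (i), I fix any $x_0 \in \ri(\Xcal)$, where $\partial\psi(x_0) \neq \emptyset$ by a standard result on proper convex functions in finite dimensions, and any $g_0 \in \partial\psi(x_0)$. Strong convexity yields
\[
\psi(x) \geq \psi(x_0) + \braket{g_0}{x - x_0} + \tfrac{\mu}{2}\|x - x_0\|^2,
\]
so $\braket{z}{x} - \psi(x)$ is upper-bounded, after completing the square, by $\braket{z}{x_0} - \psi(x_0) + \tfrac{1}{2\mu}\|z - g_0\|_*^2$, showing $\psi^*(z) < \infty$. The same inequality makes the objective strictly concave and coercive on $\Xcal$; combined with lower semicontinuity of $\psi$ (since $\psi$ is closed), this yields existence and uniqueness of a maximizer $x^*(z) \in \Xcal$.

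For step (ii), the definition of $\psi^*$ gives, for any $z' \in E^*$,
\[
\psi^*(z') \geq \braket{z'}{x^*(z)} - \psi(x^*(z)) = \psi^*(z) + \braket{z'-z}{x^*(z)},
\]
which is the subgradient inequality $x^*(z) \in \partial\psi^*(z)$. For step (iii), I appeal to the Fenchel--Young equality: any $y \in \partial\psi^*(z)$ satisfies $\psi(y) + \psi^*(z) = \braket{z}{y}$, so $y$ attains the supremum defining $\psi^*(z)$, and uniqueness from (i) forces $y = x^*(z)$. Hence $\partial\psi^*(z)$ is a singleton, and since $\psi^*$ is convex on the open set $E^*$, it is differentiable there with $\nabla\psi^*(z) = x^*(z) \in \Xcal$, establishing~(\ref{eq:mirror_solution}). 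For step (iv), writing the optimality inequalities at $z_1, z_2$ with $x_i = \nabla\psi^*(z_i)$,
\begin{align*}
\braket{z_1}{x_1} - \psi(x_1) &\geq \braket{z_1}{x_2} - \psi(x_2) + \tfrac{\mu}{2}\|x_1 - x_2\|^2, \\
\braket{z_2}{x_2} - \psi(x_2) &\geq \braket{z_2}{x_1} - \psi(x_1) + \tfrac{\mu}{2}\|x_1 - x_2\|^2,
\end{align*}
and adding gives $\mu\|x_1-x_2\|^2 \leq \braket{z_1 - z_2}{x_1 - x_2} \leq \|z_1 - z_2\|_*\,\|x_1 - x_2\|$ by the dual-norm inequality, so $\|\nabla\psi^*(z_1) - \nabla\psi^*(z_2)\| \leq \tfrac{1}{\mu}\|z_1 - z_2\|_*$.

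The only step with any real subtlety is the passage from ``singleton subdifferential at every point of $E^*$'' to Fréchet differentiability; this is a classical fact for convex functions on open subsets of a finite-dimensional space, which I would cite rather than reprove. Everything else is a routine application of strong convexity and Fenchel duality.
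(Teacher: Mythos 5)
Your proof is correct. Note, though, that the paper does not actually argue this proposition: it simply cites standard convex-analysis results (Theorems 13.3 and 25.3 of Rockafellar), so your contribution is a self-contained elementary derivation rather than a different decomposition of the paper's argument. Your route — (finiteness and a unique attained maximizer via the strong-convexity quadratic lower bound and coercivity) $\to$ (the maximizer is a subgradient of $\psi^*$) $\to$ (Fenchel--Young forces any subgradient to be that maximizer, so the subdifferential is a singleton and $\psi^*$ is differentiable) $\to$ (the two-point strong-concavity inequalities give $\mu\|x_1-x_2\|^2 \leq \braket{z_1-z_2}{x_1-x_2}$, hence $\tfrac{1}{\mu}$-Lipschitzness) — is the classical one and buys the reader a proof they can check line by line, at the cost of two implicit hypotheses worth making explicit: the Fenchel--Young equality step uses $\psi = \psi^{**}$, i.e.\ closedness of $\psi$ (which you already invoke for lower semicontinuity in step (i), and which the paper's standing assumptions support since $\Xcal$ is closed and $\psi$ is continuous on $\Xcal$), and the passage from a singleton subdifferential to differentiability is a genuinely nontrivial finite-dimensional fact (Rockafellar, Theorem 25.1) that you correctly flag as the one item to cite. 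The strong-concavity growth inequality you use in step (iv), $\braket{z_1}{x_1}-\psi(x_1) \geq \braket{z_1}{x_2}-\psi(x_2)+\tfrac{\mu}{2}\|x_1-x_2\|^2$, is also valid because the constraint is absorbed into the effective domain of $\psi$, so $x_1$ is an unconstrained maximizer with $0$ in the relevant subdifferential.
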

This follows from standard results from convex analysis, e.g. Theorems 13.3 and 25.3 in~\citep{rockafellar1970convex}. To give an example of a Lipschitz mirror map, take $\psi$ to be the squared Euclidean norm, $\psi(x) = \frac{1}{2} \|x\|_2^2$. Then $\psi^*(z) = \argmax_{x \in \Xcal} \braket{z}{x} - \frac{1}{2}\|x\|^2_2 = \argmin_{x \in \Xcal} \|z - x\|_2^2$, and the mirror map reduces to the Euclidean projection on $\Xcal$. It is worth noting that although one can theoretically construct a smooth mirror map given any convex feasible set $\Xcal$, using Proposition~\ref{prop:mirror_map}, this does not necessarily mean that the mirror map can be implemented efficiently, since in its general form, it is given by the solution to the problem~\eqref{eq:mirror_solution}. However, many convex sets have known mirror maps that are efficient to compute. For a concrete example, when $\Xcal$ is the probability simplex $\Delta = \{x \in \Rbb^n_+ : \sum_{i = 1}^n x_i = 1\}$, choosing $\psi$ to be the negative entropy $\psi(x) = \sum_{i = 1}^n x_i \log x_i$ yields a closed-form mirror map given by $(\nabla \psi^*(z))_i = e^{z_i} / \sum_{j = 1}^n e^{z_j}$, see e.g.~\cite{banerjee2005clustering} for additional examples. We will make the following regularity assumption throughout the paper:
\begin{assumption}
\label{assumption:lip}
$\Xcal$ is convex and closed, $\Xcal^\star$ is non-empty, $\psi$ is strongly convex continuous and non-negative on~$\Xcal$, $\psi^*$ is twice differentiable with a Lipschitz gradient, and $f$ is differentiable with Lipschitz gradient. We denote by $L_{\psi^*}$ the Lipschitz constant of $\nabla \psi^*$, and by $L_f$ the Lipschitz constant of $\nabla f$.
\end{assumption}

The assumption that $\psi$ is non-negative is made without loss of generality: since $\psi$ is strongly convex, its infimum is finite, and one can simply translate $\psi$ (without changing the mirror map). Some of our results will also require the feasible set to be compact, so we formulate the following assumption:
\begin{assumption}
\label{assumption:compact}
The feasible set $\Xcal$ is compact.
\end{assumption}
\subsection{Averaging formulation of accelerated mirror descent}
We start from the averaging formulation of accelerated mirror descent given by~\cite{krichene2015amd}, and propose a variant which includes a time-varying sensitivity parameter, similar to~\cite{mertikopoulos2016convergence}. 
We consider the following ODE:
\begin{equation}
\AMD_{\eta, a, s}
\begin{cases}
\dot z(t) = -\eta(t) \nabla f(x(t)) \\
\dot x(t) = a(t) (\nabla \psi^*(z(t) / s(t)) - x(t)),
\end{cases}
\end{equation}
with initial conditions $(x(t_0), z(t_0)) = (x_0, z_0)$. The ODE system is parameterized by the following functions, all assumed to be positive and continuous on $[t_0, \infty)$.
\begin{itemize}
\item $s(t)$ is a non-decreasing, inverse sensitivity parameter. As we will see, the main role of $s(t)$ will be to scale the noise term, in order to reduce its impact on the primal trajectory.
\item $\eta(t)$ is a learning rate in the dual space.
\item $a(t)$ is an averaging rate in the primal space. In order to see this connection with averaging, we can rewrite the primal ODE in integral form as a weighted average of the mirror trajectory $\Mcal_t = \{\nabla \psi^*(z(\tau)/s(\tau)), \tau \in [t_0, t]\}$ as follows: let $a(t) = \frac{\dot w(t)}{w(t)}$ (equivalently, $w(t) = w(t_0) e^{\int_{t_0}^t a(\tau) d\tau}$), then the primal ODE is equivalent to
\[
w(t) \dot x(t) + \dot w(t) x(t) = \dot w(t)\nabla \psi^*(z(t) / s(t)),
\]
and integrating and rearranging,
\begin{equation}
\label{eq:primal_integral_form}
x(t) = \frac{x(t_0)w(t_0) + \int_{t_0}^t \dot w(\tau) \nabla \psi^*(Z(\tau) / s(\tau)) d\tau}{w(t)}.
\end{equation}
\end{itemize}

\subsection{Energy decay}

Next, we define an energy function which will be central in our analysis. The analysis of continuous-time dynamics often relies on a Lyapunov argument (in reference to~\cite{lyapunov1992general}): one starts by defining a non-negative energy function, then bounding its rate of change along solution trajectories. This bound can then be used to prove convergence to the set of minimizers $\Xcal^\star$. We will consider a modified version of the energy function used by~\cite{krichene2016adaptive}: given a positive, $C^1$ function $r(t)$, and a pair of optimal primal-dual points $(x^\star, z^\star)$ such that\footnote{Note that in general, $\nabla \psi^*$ may not be surjective (specifically, points on the boundary of $\Xcal$ may not be attained), but the analysis can be extended to such cases by replacing the Bregman divergence term in $L$ by the Fenchel coupling defined by~\cite{mertikopoulos2016convergence}.} $x^\star \in \Xcal^\star$ and $\nabla \psi^*(z^\star) = x^\star$, let
\begin{equation}
\label{eq:lyap}
L(x, z, t) = r(t)(f(x) - f(x^\star)) + s(t) D_{\psi^\star}(z(t) / s(t), z^\star).
\end{equation}
Here, $D_{\psi^*}$ is the Bregman divergence~\citep{bregman1967relaxation} associated to $\psi^*$, defined by
\al{
D_{\psi^*}(z', z) = \psi^*(z') - \psi^*(z) - \braket{\nabla \psi^*(z)}{z'-z}, && \text{for all $z, z' \in E^*$}.
}
Then we can prove a bound on the time derivative of $L$ along solution trajectories of $\AMD_{\eta, a, s}$, given in the following proposition. To keep the equations compact, we will occasionally omit explicit dependence on time, and write, e.g. $z / s$ instead of $z(t) / s(t)$.
\begin{lemma}
\label{lem:lyap_bound}
Suppose that Assumption~\ref{assumption:lip} holds, and suppose that $a = \eta / r$. Then under $\AMD_{\eta, \eta/r, s}$, for all $t \geq t_0$,
\begin{equation}
\label{eq:lyap_bound}
\frac{d}{dt} L(x(t), z(t), t) \leq (f(x(t)) - f(x^\star))(\dot r(t) - \eta(t)) + \psi(x^\star) \dot s(t)
\end{equation}
\end{lemma}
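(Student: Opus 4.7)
The plan is to compute $\frac{d}{dt}L$ directly by applying the chain rule to the two summands, use the averaging relation $a = \eta/r$ to simplify the primal contribution, and isolate the $\dot{s}$ terms in the Bregman contribution via Fenchel--Young duality. Writing $\tilde x(t) = \nabla\psi^\star(z(t)/s(t))$ for the mirrored point (so the primal ODE reads $\dot x = a(\tilde x - x)$), I expect the cross-terms involving $\nabla f(x)$ and $\tilde x$ to cancel, leaving only a convexity inequality on $f$ and a non-positive $-\dot s\,\psi(\tilde x)$ correction.

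For the first summand, the chain rule gives
\[
\frac{d}{dt}\bigl[r(t)(f(x)-f(x^\star))\bigr] = \dot r\,(f(x)-f(x^\star)) + r\,\langle \nabla f(x), \dot x\rangle.
\]
The plugin $\dot x = a(\tilde x - x)$ with $ra = \eta$ collapses the second term to $\eta\langle \nabla f(x),\,\tilde x - x\rangle$, which is the only form in which $\dot x$ will appear.

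For the second summand, the key observation is that $\nabla_1 D_{\psi^\star}(\cdot, z^\star) = \nabla\psi^\star(\cdot) - x^\star$, so
\[
\frac{d}{dt}D_{\psi^\star}(z/s,\,z^\star) = \bigl\langle \tilde x - x^\star,\ \tfrac{\dot z}{s} - \tfrac{\dot s\, z}{s^2}\bigr\rangle.
\]
Multiplying by $s$ and adding the product-rule term $\dot s\,D_{\psi^\star}(z/s,z^\star)$, and substituting $\dot z = -\eta\nabla f(x)$, yields
\[
\frac{d}{dt}\bigl[s\,D_{\psi^\star}(z/s, z^\star)\bigr] = \dot s\,D_{\psi^\star}(z/s, z^\star) - \eta\,\langle \tilde x - x^\star, \nabla f(x)\rangle - \tfrac{\dot s}{s}\langle \tilde x - x^\star, z\rangle.
\]
The main obstacle is simplifying the $\dot s$ pieces: the term $\dot s D_{\psi^\star}(z/s,z^\star) - (\dot s/s)\langle \tilde x - x^\star, z\rangle$ does not obviously reduce. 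This is where I would invoke Fenchel--Young equalities $\psi^\star(z/s) = \langle \tilde x, z/s\rangle - \psi(\tilde x)$ and $\psi^\star(z^\star) = \langle x^\star, z^\star\rangle - \psi(x^\star)$, both of which hold because $\tilde x \in \partial\psi^\star(z/s)$ and $x^\star \in \partial\psi^\star(z^\star)$. Expanding $D_{\psi^\star}(z/s,z^\star)$ and substituting these two identities, the $\langle\tilde x, z\rangle$ and $\langle x^\star, z^\star\rangle$ terms cancel telescopically, leaving the clean expression
\[
\frac{d}{dt}\bigl[s\,D_{\psi^\star}(z/s, z^\star)\bigr] = \dot s\,\bigl(\psi(x^\star) - \psi(\tilde x)\bigr) - \eta\,\langle \tilde x - x^\star, \nabla f(x)\rangle.
\]

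Finally I would sum the two contributions. The two $\eta\langle\nabla f(x),\tilde x,\cdot\rangle$ terms combine as $\eta\langle \nabla f(x), \tilde x - x\rangle - \eta\langle \nabla f(x), \tilde x - x^\star\rangle = \eta\langle \nabla f(x), x^\star - x\rangle$, at which point convexity of $f$ gives $\eta\langle \nabla f(x), x^\star - x\rangle \leq -\eta\,(f(x)-f(x^\star))$. Using $\dot s \geq 0$ (monotonicity of $s$) and $\psi \geq 0$ (Assumption~\ref{assumption:lip}) to drop the $-\dot s\,\psi(\tilde x)$ term, I arrive at
\[
\frac{d}{dt} L \leq (\dot r - \eta)(f(x)-f(x^\star)) + \dot s\,\psi(x^\star),
\]
which is precisely \eqref{eq:lyap_bound}. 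The only non-routine ingredient is the Fenchel--Young cancellation in the Bregman derivative; everything else is chain rule plus one use of convexity.
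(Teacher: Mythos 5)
Your proposal is correct and follows essentially the same route as the paper: chain rule plus product rule on $s\,D_{\psi^*}(z/s,z^\star)$, the Fenchel--Young cancellation to reduce the $\dot s$ terms to $\dot s(\psi(x^\star)-\psi(\tilde x))$ (the paper packages this as the identity~\eqref{eq:Bregman_identity}), substitution of the ODEs with $ra=\eta$ to cancel the $\nabla f$ cross-terms, convexity of $f$, and dropping $-\dot s\,\psi(\tilde x)\le 0$ via $\dot s\ge 0$ and $\psi\ge 0$. The only cosmetic difference is that you expand $\dot x = a(\tilde x - x)$ directly, whereas the paper writes $\tilde x = \dot x/a + x$ and observes that the coefficient $r-\eta/a$ of $\braket{\nabla f(x)}{\dot x}$ vanishes.
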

\begin{proof}
We start by recalling a Bregman identity which will be useful in the proof. We have for all $x \in E$ and $z \in E^*$, $\psi(x) + \psi^*(z) = \braket{x}{z} \eqv x \in \partial \psi^*(z) \eqv z \in \partial \psi(x)$ (Theorem 23.5 in \cite{rockafellar1970convex}). Thus
\begin{align}
\psi(\nabla \psi^*(z_1)) - \psi(\nabla \psi^*(z_2))
&= \braket{\nabla \psi^*(z_1)}{z_1} - \psi^*(z_1) - \braket{\nabla \psi^*(z_2)}{z_2} + \psi^*(z_2) \notag \\
&= D_{\psi^*}(z_2, z_1) - \braket{\nabla \psi^*(z_2) - \nabla \psi^*(z_1)}{z_2}. \label{eq:Bregman_identity}
\end{align}

We proceed by bounding the rate of change of the Bregman divergence term:
\begin{align}
\frac{d}{dt} s(t) &D_{\psi^*}(z(t)/s(t), z^\star) \notag\\
&= \dot s D_{\psi^*}(z/s, z^\star) + s \braket{\nabla \psi^*(z/s) - \nabla \psi^*(z^\star)}{\dot z/s - \dot s z/s^2} \notag\\
&= \braket{\nabla \psi^*(z/s) - x^\star}{\dot z} + \dot s (D_{\psi^*}(z/s, z^\star) - \braket{\nabla \psi^*(z/s) - \nabla \psi^*(z^\star)}{z/s}) \notag\\
&= \braket{\nabla \psi^*(z/s) - x^\star}{\dot z} + \dot s (\psi(x^\star) - \psi(\nabla \psi^*(z/s))) \notag\\
&\leq \braket{\nabla \psi^*(z/s) - x^\star}{\dot z} + \dot s \psi(x^\star), \label{eq:Bregman_change_bound}
\end{align}
where the third equality uses the identity~\eqref{eq:Bregman_identity}, and the last inequality follows from the assumption that $s$ is non-decreasing, and that $\psi$ is non-negative. Using this expression, we can then compute
\al{
\frac{d}{dt} L(x(t), z(t), t)
&\leq \dot r (f(x) - f(x^\star)) + r \braket{\nabla f(x)}{\dot x} + \braket{\nabla \psi^*(z / s) - x^\star}{\dot z} + \psi(x^\star) \dot s \\
&= \dot r (f(x) - f(x^\star)) + r \braket{\nabla f(x)}{\dot x} + \braket{\dot x/a + x - x^\star}{-\eta \nabla f(x)} + \psi(x^\star) \dot s \\
&\leq (f(x) - f(x^\star))(\dot r - \eta) + \braket{\nabla f(x)}{\dot x} (r - \eta/a) + \psi(x^\star) \dot s,
}
where we used the chain rule in the first equality, we plugged in the expression of $\dot z$ and $\nabla \psi^*(z/s)$ from $\AMD_{\eta, a, s}$ to obtain the second equality, and used convexity of $f$ in the last inequality. The assumption $a = \eta / r$ ensures that the middle term vanishes\footnote{Note that this assumption can be replaced by an adaptive rate $a(t)$ similar to the heuristic developed in~\citep{krichene2016adaptive}.}, which concludes the proof.
\end{proof}

As a consequence of the previous proposition, we can prove the following convergence rate:
\begin{corollary}
\label{cor:deterministic_rate}
Suppose that $a = \eta / r$ and that $\eta \geq \dot r$. Then under $\AMD_{\eta, \eta/r, s}$, for all $t \geq t_0$
\[
f(x(t)) - f(x^\star) \leq \frac{\psi(x^\star) (s(t) - s(t_0))+ L(x_0, z_0, t_0)}{r(t)}.
\]
\end{corollary}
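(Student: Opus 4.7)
The plan is to apply Lemma~\ref{lem:lyap_bound} directly and integrate. Under the hypotheses $a = \eta/r$ and $\eta \geq \dot r$, the first term on the right-hand side of the bound~\eqref{eq:lyap_bound} is non-positive, since $f(x(t)) - f(x^\star) \geq 0$ by optimality of $x^\star$ and $\dot r - \eta \leq 0$ by hypothesis. Therefore
\[
\frac{d}{dt} L(x(t), z(t), t) \leq \psi(x^\star)\, \dot s(t).
\]

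Next, I would integrate this differential inequality from $t_0$ to $t$. Since $\psi(x^\star)$ is a constant with respect to time, the right-hand side integrates to $\psi(x^\star)(s(t) - s(t_0))$, yielding
\[
L(x(t), z(t), t) \leq L(x_0, z_0, t_0) + \psi(x^\star)(s(t) - s(t_0)).
\]

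Finally, I would lower-bound the left-hand side using the definition~\eqref{eq:lyap} of $L$. Because $D_{\psi^*}$ is a Bregman divergence associated with a convex function, it is non-negative, and $s(t) > 0$, so
\[
L(x(t), z(t), t) = r(t)(f(x(t)) - f(x^\star)) + s(t) D_{\psi^*}(z(t)/s(t), z^\star) \geq r(t)(f(x(t)) - f(x^\star)).
\]
Combining the last two displays and dividing by $r(t) > 0$ gives exactly the stated bound.

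There is no real obstacle here: the work is all front-loaded in Lemma~\ref{lem:lyap_bound}, and this corollary is essentially a Grönwall-style integration combined with two sign observations (non-negativity of the Bregman divergence and the sign of $\dot r - \eta$). The only subtlety worth mentioning is checking that $L$ is differentiable along trajectories, which follows from the regularity in Assumption~\ref{assumption:lip} together with the $C^1$ assumption on $r$ and $s$; this justifies using the fundamental theorem of calculus on $L(x(t),z(t),t)$.
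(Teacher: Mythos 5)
Your proof is correct and follows exactly the paper's own argument: drop the non-positive first term of the bound in Lemma~\ref{lem:lyap_bound}, integrate to get $L(x(t),z(t),t) \leq L(x_0,z_0,t_0) + \psi(x^\star)(s(t)-s(t_0))$, and discard the non-negative Bregman term to conclude. No gaps.
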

\begin{proof}
Starting from the bound~\eqref{eq:lyap_bound}, the first term is non-positive by assumption on $\eta$. Integrating, we have
\[
L(x(t), z(t), t) - L(x_0, z_0, t_0) \leq \psi(x^\star) (s(t) - s(t_0)),
\]
and we conclude by observing that by definition of $L$,
\al{
f(x(t) - f(x^\star))
&= \frac{L(x(t), z(t), t) - s(t)D_{\psi^*}(z(t)/s(t), z^\star)}{r(t)} \\
&\leq \frac{L(x(t), z(t), t)}{r(t)}
}
since the Bregman divergence term is non-negative).
\end{proof}

\begin{remark}
In the deterministic case, it appears from the bound of Corollary~\ref{cor:deterministic_rate} that a strictly increasing $s(t)$ would degrade the convergence rate. However, as we will see in the next section, this parameter will be essential in controlling the effect of noise.
\end{remark}

\begin{remark}
We can recover the (non-accelerated) $\MD$ dynamics as a limiting case of $\AMD$: writing the second equation of $\AMD$ as
\[
x(t) = \nabla {\psi^*}(z(t)/s(t)) - \frac{\dot x(t)}{a(t)}
\]
we can see that $\MD$ can be formally recovered by taking $a(t)$ infinite.

\end{remark}

\begin{remark}
Nesterov's accelerated method can be seen as a special case of $\AMD$ dynamics in the unconstrained Euclidean case, with a quadratic $r(t)$. This is discussed in Appendix~\ref{sec:app:nesterov_ode}.
\end{remark}
\section{Stochastic dynamics}
\label{sec:sto}
We now formulate the stochastic variant of accelerated mirror descent dynamics ($\SAMD$). Intuitively, we would like to replace the gradient terms $\nabla f(x(t))$ in $\AMD_{\eta, a, s}$ by a noisy gradient $G$. More formally, we define a process $G(t)$ which satisfies the It\^o SDE
\begin{equation}
\label{eq:grad_process}
dG(t) = \nabla f(X(t))dt + \sigma(X(t), t) dB(t).
\end{equation}
Here, $B(t) \in \Rbb^n$ is a standard Wiener process with respect to a given filtered probability space $(\Omega, \Fcal, \{\Fcal_t\}_{t\geq t_0}, \mathbb P)$, and $\sigma: (x, t) \mapsto \sigma(x, t) \in \Rbb^{n \times n}$ is a volatility matrix which satisfies the following assumptions:
\begin{assumption}
\label{assumption:volatility}
The volatility matrix $\sigma(x, t)$ is measurable, Lipschitz in $x$ (uniformly in $t$), and continuous in $t$ for all $x$.
\end{assumption}

The resulting stochastic dynamics are given by the SDE system:
\begin{equation}
\SAMD_{\eta, a, s}
\begin{cases}
dZ(t) = - \eta(t) dG(t) = -\eta(t)[\nabla f(X(t))dt + \sigma(X(t), t) dB(t)] \\
dX(t) = a(t) [\nabla \psi^*(Z(t)/s(t)) - X(t)]dt,
\end{cases}
\end{equation}
with initial condition $(X(t_0), Z(t_0)) = (x_0, z_0)$ (we consider deterministic initial conditions for simplicity). The drift term in $\SAMD_{\eta, a, s}$ is identical to the deterministic case, and the volatility term $-\eta(t)\sigma(X(t), t)dB(t)$ represents the noise in the gradient. In particular, we note that the volatility is proportional to $\eta(t)$, to capture the fact that the gradient noise is scaled by the learning rate $\eta$. This formulation is fairly general, and does not assume, in particular, that the different components of the gradient noise are independent, as we can see in the quadratic covariation of the gradient process $G(t)$:
\begin{equation}
\label{eq:quad_cov_G}
d[G_i(t), G_j(t)] = (\sigma(X(t), t) \sigma(X(t), t)^T)_{i, j} dt = \Sigma_{ij}(X(t), t) dt,
\end{equation}
where we defined the infinitesimal covariance matrix $\Sigma(x, t) = \sigma(x, t) \sigma(x, t)^T \in \Rbb^{n \times n}$. Similarly, we have
\begin{equation}
\label{eq:quad_cov}
d[Z_i(t), Z_j(t)] = \eta(t)^2 \Sigma_{ij}(X(t), t) dt.
\end{equation}
We will denote
\begin{equation}
\label{eq:sigma_star}
\sigma_*^2(t) = \sup_{x \in \Xcal} \|\Sigma(x, t)\|_i,
\end{equation}
where $\|\Sigma\|_i = \sup_{\|z\|_* \leq 1}\|\Sigma z\|$ is the induced matrix norm. Contrary to the work of~\citep{raginsky2012continuous,mertikopoulos2016convergence}, we do not assume, a priori, that $\sigma_*(t)$ is uniformly bounded in $t$. Observe that when Assumption~\ref{assumption:compact} holds (i.e. $\Xcal$ is compact), since $\Sigma(x, t)$ is Lipschitz in $x$ and continuous in $t$, $\sigma_*(t)$ is finite for all $t$, and continuous.

\subsection{Illustration of SAMD dynamics}
\label{sec:example}

\begin{figure}[h]
\centering
\includegraphics[width=1.\textwidth]{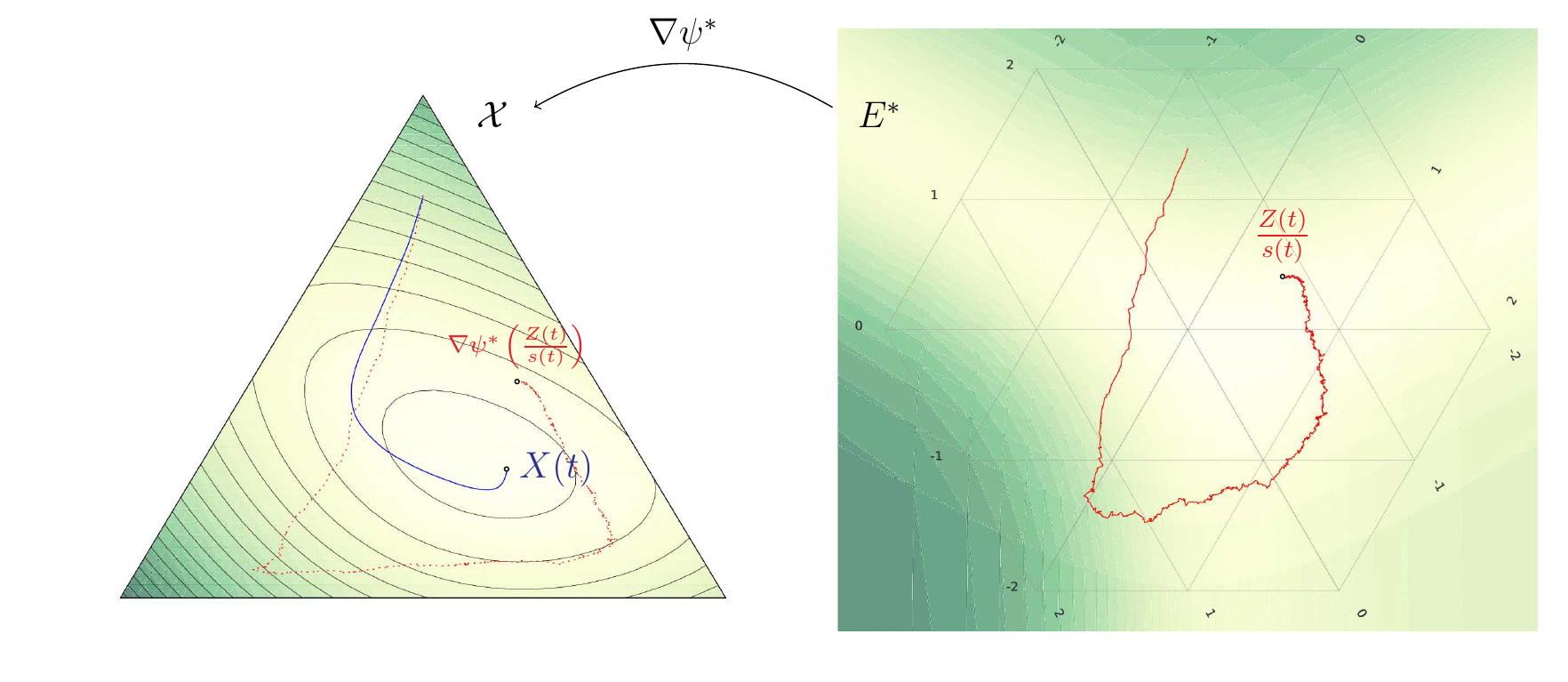}
\caption{Illustration of SAMD dynamics. The dual trajectory $Z(t)$ cumulates negative gradients, $\dot Z(t) = -\eta(t)\nabla f(X(t))$. We visualize the scaled dual trajectory $Z(t) / s(t)$ in the dual space (red), and the corresponding mirror $\nabla \psi^*(\frac{Z(t)}{s(t)})$ in the primal space (dotted red). The primal trajectory $X(t)$ is obtained by averaging the mirror.}
\label{fig:samd}
\end{figure}

We give an illustration of SAMD dynamics in Figure~\ref{fig:samd}, on a simplex-constrained problem in $\Rbb^3$. The feasible set is given by $\Xcal = \{x \in \Rbb^n_+ : \sum_{i = 1}^n x_i = 1\}$, and the mirror map is generated by the negative entropy restricted to the simplex, given by:
\[
\psi(x) = \begin{cases}
-\sum_{i = 1}^n x_i \ln x_i & \text{if $x \in \Xcal$,} \\
+\infty & \text{otherwise,}
\end{cases}
\]
which is strongly convex with respect to the norm $\|\cdot\|_1$ by Pinsker's inequality. Its convex conjugate is given by
\al{
\psi^*(z) 
= \max_{x \in \Xcal} \braket{x}{z} - \sum_{i = 1}^n x_i \ln x_i = \ln \sum_{i = 1}^n e^{z_i}
}
which is differentiable for all $z \in E^*$, and the mirror map is
\[
\nabla \psi^*(z) = \frac{e^{z_i}}{\sum_{j = 1}^n e^{z_j}} \in \Xcal
\]
which is Lipschitz w.r.t. the dual norms $\|\cdot\|_1, \|\cdot\|_\infty$.

Note that for all $z \in E^*$ and all $\alpha \in \Rbb$,
\[
\nabla \psi^*(z) = \nabla \psi^*(z + \alpha \mathbf 1),
\]
where $\mathbf 1$ is the vector of all ones. This can be verified directly using the expression of $\nabla \psi^*$, but can also be seen as a consequence of the duality of sub-differentials (e.g. Theorem 23.5 in \cite{rockafellar1970convex}), which states that $x = \nabla \psi^*(z)$ if and only if $z \in \partial \psi(x)$; and since $\psi$ is the restriction of the negative entropy $-H(x) = -\sum_{i = 1}^nx_i \ln x_i$ to the simplex, its sub-differential at $x$ is
\[
\partial \psi(x) = -\nabla H(x) + n_\Xcal(x)
\]
where $n_\Xcal(x)$ is the normal cone to $\Xcal$ at $x$, which is simply the line $\Rbb \mathbf 1$ (when $x$ is in the relative interior of the simplex).

Since the mirror map is constant along the normal to the simplex, we choose to project the dual variable $Z$ on the hyperplane parallel to the simplex, for visualization purposes. This allows us to visualize the relevant component of the dual dynamics, and ignore a component which does not matter for convergence (but which could have high magnitudes if $\nabla f$ has a large component along the normal). Note that even numerically, projecting $Z$ after each iteration helps improve numerical stability (without affecting the primal trajectory).

Finally in order to visualize the function values, we generate a triangular mesh of the simplex, then map it to the dual space. In other words, the colors in the primal space represent $f(x)$, and in the dual space represent $f(\nabla \psi^*(z))$. It is interesting to observe how the mirror map $\nabla \psi^*$ distorts the space between primal and dual spaces.

The objective function used in this example (which we use as a running numerical example to illustrate our results) is given by the sum of exponentials
\[
f(x) = \sum_{i = 1}^k e^{\braket{c_i}{x}},
\]
where $\{c_i\}_{1 \leq i \leq k}$ are vectors in $\Rbb^n$.

\subsection{Existence and uniqueness of a continuous solution}
We give the following existence and uniqueness result.
\begin{proposition}
Suppose Assumptions~\ref{assumption:lip}, \ref{assumption:compact} and \ref{assumption:volatility} hold. Then for all $T > t_0$, $\SAMD_{\eta, a, s}$ has a unique (up to redefinition on a $\mathbb P$-null set) solution $(X(t), Z(t))$ continuous on $[0, T]$, with the property that $(X(t), Z(t))$ is adapted to the filtration $\{\Fcal_t\}$, and $\int_{t_0}^T \|X(t)\|^2 d_t, \int_{t_0}^T \|Z(t)\|_*^2 dt$ have finite expectations.
\end{proposition}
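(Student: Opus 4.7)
The plan is to recast the coupled SDE as a single It\^o equation for the joint process $Y(t) = (X(t), Z(t)) \in \Rbb^{2n}$ and then invoke the classical existence and uniqueness theorem for SDEs with globally Lipschitz and linearly growing coefficients (e.g.\ Theorem~5.2.1 in \cite{oksendal2003stochastic}). Define the drift
\[
\mu(x, z, t) = \bigl(\,a(t)(\nabla \psi^*(z/s(t)) - x),\ -\eta(t)\nabla f(x)\,\bigr)
\]
and let $\tilde\sigma(x, z, t)$ be the $2n \times n$ block diffusion matrix whose $X$-block is $0$ and whose $Z$-block is $-\eta(t)\sigma(x, t)$, so that the system reads $dY(t) = \mu(Y(t), t)\,dt + \tilde\sigma(Y(t), t)\,dB(t)$ with deterministic initial data $Y(t_0) = (x_0, z_0)$.

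Fix $T > t_0$. The main step is to verify that $\mu$ and $\tilde\sigma$ are globally Lipschitz in $(x, z)$ uniformly in $t \in [t_0, T]$, with linear growth. Since $\eta, a, s$ are positive and continuous on the compact interval $[t_0, T]$, they are bounded above there and $s$ is bounded away from $0$; combining this with the $L_{\psi^*}$-Lipschitz continuity of $\nabla \psi^*$ and the $L_f$-Lipschitz continuity of $\nabla f$ (Assumption~\ref{assumption:lip}), together with the uniform-in-$t$ Lipschitz property of $\sigma(\cdot, t)$ (Assumption~\ref{assumption:volatility}), produces a constant $K = K(T)$ satisfying
\[
\|\mu(x, z, t) - \mu(x', z', t)\| + \|\tilde\sigma(x, z, t) - \tilde\sigma(x', z', t)\|_i \leq K\bigl(\|x - x'\| + \|z - z'\|\bigr)
\]
for all $t \in [t_0, T]$ and all $(x, z), (x', z') \in \Rbb^{2n}$. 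Linear growth then follows from this Lipschitz bound combined with boundedness of $\mu(0, 0, \cdot)$ and $\tilde\sigma(0, 0, \cdot)$ on $[t_0, T]$, which holds by continuity of $t \mapsto \eta(t), a(t), s(t), \sigma(0, t)$ together with the constancy of $\nabla\psi^*(0)$ and $\nabla f(0)$.

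Applying the classical theorem then produces a pathwise unique (up to $\mathbb P$-null modifications) $\{\Fcal_t\}$-adapted process $(X, Z)$, continuous on $[t_0, T]$, satisfying $\Exp \int_{t_0}^T (\|X(t)\|^2 + \|Z(t)\|^2)\,dt < \infty$; since $E^*$ is finite-dimensional, equivalence of norms converts the latter to the claimed bound in $\|\cdot\|_*$. There is no substantive obstacle — only a bookkeeping point worth flagging, namely that the Lipschitz and growth constants depend on $T$ through $\sup_{[t_0, T]}\eta$, $\sup_{[t_0, T]}a$ and $\inf_{[t_0, T]} s > 0$, which is precisely why Assumption~\ref{assumption:volatility} demands Lipschitz continuity in $x$ uniformly in $t$. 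Assumption~\ref{assumption:compact} is not actually required here; it is used subsequently to guarantee that $\sigma_*(t)$ is finite.
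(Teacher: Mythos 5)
Your proof is correct and follows essentially the same route as the paper: both recast the coupled system as a single SDE for $(X,Z)$, verify Lipschitz continuity and linear growth of the coefficients on $[t_0,T]$ using Assumptions~\ref{assumption:lip} and~\ref{assumption:volatility} together with the continuity and positivity of $\eta, a, s$, and then invoke Theorem~5.2.1 of \citep{oksendal2003stochastic}. Your closing observation that Assumption~\ref{assumption:compact} is dispensable also agrees with the paper, which notes immediately after the proposition that compactness can be traded for a linear-growth condition on $\Sigma$; the paper retains compactness only because it reads the Lipschitz property of $\sigma(\cdot,t)$ as holding on $\Xcal$ rather than on all of $E$.
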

\begin{proof}
By assumption, $\nabla \psi^*$ and $\nabla f$ are Lipschitz continuous, thus the function $(x, z) \mapsto (-\eta(t)\nabla f(x), a(t)[\nabla \psi^*(z/s(t)) - x])$ is Lipschitz on $[t_0, T]$ (since $a, \eta, s$ are positive continuous). Additionally, the function $x \mapsto \sigma(x, t)$ is Lipschitz on the compact set $\Xcal$. The function $\sigma_*(t)$ is continuous, since it is by definition the supremum of continuous functions
\[
\sigma_*^2(t) = \sup_{x \in \Xcal} \|\Sigma(x, t)\|_i = \sup_{x \in \Xcal} \sup_{\|z\|_* \leq 1} \|\Sigma(x, t)z\|
\]
and $\Xcal$ and $\{z \in E^* : \|z\|_* \leq 1\}$ are both compact.

Therefore, we can invoke the existence and uniqueness theorem for stochastic differential equations~\citep[Theorem 5.2.1]{oksendal2003stochastic}.
\end{proof}
Note that since $T$ is arbitrary in the previous proposition, we can conclude that there exists a unique continuous solution on $[t_0, \infty)$. In the previous proposition, we assumed that $\Xcal$ is compact to guarantee the conditions of the existence and uniqueness theorem, but this can be relaxed. Instead, we can directly assume additional regularity of the noise, e.g. that $\Sigma(x, t)$ additionally satisfies $\|\Sigma(x, t)\|_i \leq C(1+\|x\|)$ for some $C > 0$, uniformly in $t$.

Next, in order to analyze the convergence properties of the solution trajectories $(X(t), Z(t))$, we will need to bound the time-derivative of the energy function $L$.

\subsection{Energy decay}
In this section, we state and prove two technical lemmas which will be useful in proving our main convergence results. We start by bounding the rate of change of the energy function along solution trajectories of $\SAMD$.
\subsubsection*{Bounding the rate of change of the energy}
\begin{lemma}
\label{lem:lyap_bound_sto}
Suppose Assumption~\ref{assumption:lip} holds, and that the primal rate satisfies $a = \eta / r$, and let $(X(t), Z(t))$ be a continuous solution to $\SAMD_{\eta, \eta/r, s}$. Then for all $t \geq t_0$,
\al{
dL&(X(t), Z(t), t) \leq \\
&\left[(f(X(t)) - f(x^\star))(\dot r(t) - \eta(t)) + \psi(x^\star) \dot s(t) + \frac{n L_{\psi^*}}{2} \frac{\eta^2(t)\sigma_*^2(t)}{s(t)}\right] dt + \braket{V(t)}{dB(t)},
}
where $V(t)$ is the continuous, $n$ dimensional process given by
\begin{equation}
\label{eq:v}
V(t) = -\eta(t) \sigma(X(t), t)^T(\nabla \psi^*(Z(t)/s(t)) - \nabla \psi^*(z^\star))
\end{equation}
\end{lemma}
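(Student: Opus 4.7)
The plan is to apply Itô's formula to $L(X(t), Z(t), t)$, exploiting the fact that only $Z(t)$ carries a diffusion term ($X(t)$ is driven by an ODE in the primal, so it has finite variation). I would split $L = L_1 + L_2$ with
\[
L_1(x,t) = r(t)(f(x)-f(x^\star)), \qquad L_2(z,t) = s(t) D_{\psi^*}(z/s(t), z^\star),
\]
and compute $dL_1$ and $dL_2$ separately.

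For $L_1$, since $X$ has finite variation, the chain rule gives
$dL_1 = \dot r(t)(f(X)-f(x^\star))\,dt + r(t)\langle \nabla f(X), dX\rangle$,
and plugging in the primal SDE $dX = a(t)[\nabla \psi^*(Z/s) - X]dt$ produces the same drift expression as in the deterministic proof of Lemma~\ref{lem:lyap_bound}. For $L_2$, I would compute $\partial_t L_2$, $\nabla_z L_2$, and $\nabla^2_z L_2$ directly from the definition of $D_{\psi^*}$: by the same chain-rule manipulations as in the deterministic proof, $\partial_t L_2 = \dot s[\psi(x^\star) - \psi(\nabla \psi^*(Z/s))]$ (using the Bregman identity~\eqref{eq:Bregman_identity}), $\nabla_z L_2 = \nabla \psi^*(Z/s) - \nabla \psi^*(z^\star)$, and $\nabla_z^2 L_2 = (1/s)\nabla^2 \psi^*(Z/s)$. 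Applying Itô's formula to $L_2$ with $dZ = -\eta \nabla f(X)dt - \eta \sigma(X,t)dB$ and the quadratic covariation~\eqref{eq:quad_cov} gives a drift, an Itô correction $\tfrac{\eta^2}{2s}\Tr\!\bigl(\nabla^2 \psi^*(Z/s)\,\Sigma(X,t)\bigr)\,dt$, and a martingale term $\langle -\eta \sigma^T(\nabla \psi^*(Z/s) - \nabla \psi^*(z^\star)), dB\rangle = \langle V(t), dB(t)\rangle$ as defined by~\eqref{eq:v}.

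The drift parts then combine exactly as in Lemma~\ref{lem:lyap_bound}: the $\psi(\nabla \psi^*(Z/s))$ term is discarded by non-negativity of $\psi$, the assumption $a = \eta/r$ kills the $\langle \nabla f(X), \dot X\rangle$-type cross term, and convexity of $f$ yields $(f(X)-f(x^\star))(\dot r - \eta)$. The new ingredient is the Itô correction, which I would bound using the fact that $\nabla \psi^*$ is $L_{\psi^*}$-Lipschitz, so $\nabla^2 \psi^*$ is a PSD matrix with spectral norm bounded by $L_{\psi^*}$; together with $\Sigma \succeq 0$, this gives
\[
\Tr\!\bigl(\nabla^2 \psi^*(Z/s)\,\Sigma(X,t)\bigr) \leq L_{\psi^*}\Tr(\Sigma(X,t)) \leq n L_{\psi^*}\|\Sigma(X,t)\|_i \leq n L_{\psi^*}\sigma_*^2(t),
\]
yielding the claimed $\tfrac{n L_{\psi^*}}{2}\,\eta^2(t)\sigma_*^2(t)/s(t)$ term.

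The main subtlety, rather than any single calculation, is book-keeping: verifying that $L$ is $C^2$ in $z$ and $C^1$ in $t$ (so Itô applies), that $\nabla^2 \psi^*$ exists and is bounded by $L_{\psi^*}$ (via Assumption~\ref{assumption:lip}), and that the trace inequality uses the PSD structure of both factors. Everything else parallels the deterministic Lemma~\ref{lem:lyap_bound}, with the extra Itô correction showing up as the only genuinely new term in the drift and with $\langle V(t), dB(t)\rangle$ collected as the martingale part.
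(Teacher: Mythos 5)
Your proposal is correct and follows essentially the same route as the paper: apply It\^o's formula to $L$ (the paper does so for the whole function at once rather than splitting $L_1+L_2$, but the computations of $\partial_t L$, $\nabla_z L$, and $\nabla^2_{zz}L$ are identical), reuse the deterministic drift bound of Lemma~\ref{lem:lyap_bound}, collect $\braket{V}{dB}$ as the martingale part, and bound the It\^o correction by $\frac{nL_{\psi^*}}{2}\eta^2\sigma_*^2/s$. The only cosmetic difference is that you bound $\tr(\nabla^2\psi^*\,\Sigma)$ via the PSD inequality $\tr(AB)\le \lambda_{\max}(A)\tr(B)$, whereas the paper uses a generic bound $\tr(PQ)\le n\alpha_P\alpha_Q$ for operators with bounded induced norms; both yield the same constant.
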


\begin{proof}
By definition of the energy function $L$, $\nabla_x L(x, z, t) = r(t) \nabla f(x)$ and $\nabla_z L(x, z, t) = \nabla \psi^*(z / s(t)) - \nabla \psi^*(z^\star)$, which are Lipschitz continuous in $(x, z)$ (uniformly in $t$ on any bounded interval, since $s(t), r(t)$ are continuous positive functions of $t$). Thus by the It\^o formula for functions with Lipschitz continuous gradients~\citep{errami2002ito}, we have
\al{
dL 
&= \partial_t L dt + \braket{\nabla_x L}{dX} + \braket{\nabla_z L}{dZ} + \frac{1}{2}\tr\parenth{\eta\sigma^T\nabla^2_{zz} L \sigma \eta} dt \\
&= \partial_t L dt + \braket{\nabla_x L}{dX} + \braket{\nabla_z L}{-\eta \nabla f(X)} dt + \braket{\nabla_z L}{-\eta \sigma dB} + \frac{\eta^2}{2}\tr\parenth{\Sigma \nabla^2_{zz} L } dt.
}
The first three terms correspond exactly to the deterministic case, and we can bound them by~\eqref{eq:lyap_bound} from Lemma~\ref{lem:lyap_bound}. The last two terms are due to the stochastic noise, and consist of a volatility term
\[
-\eta \braket{\nabla_z L(X, Z, t)}{\sigma dB} = -\eta \braket{\nabla \psi^*(Z/s) - \nabla \psi^*(z^\star)}{\sigma dB} = \braket{V}{dB},
\]
and the It\^o correction term
\[
\frac{\eta^2}{2} \tr\parenth{\Sigma(X, t) \nabla^2_{zz} L(X, Z, t) } dt = \frac{\eta^2}{2s} \tr\parenth{\Sigma(X, t) \nabla^2 \psi^*(Z/s) } dt.
\]
We can bound the last term using the following simple fact: Given two linear operators $P: E \to E^*$ and $Q: E^* \to E$, such that $\|P\|_{*,i} \leq \alpha_P$ and $\|Q\|_{i} \leq \alpha_Q$ (where $\|\cdot\|_{i}$ and $\|\cdot\|_{*,i}$ are the norms induced by the pair of dual norms $\|\cdot\|$ and $\|\cdot\|_*$), then we have $\tr(PQ) \leq n \alpha_P \alpha_Q$ since
\al{
\tr(PQ)
= \sum_{j = 1}^n \braket{P_j}{Q_j}
\leq \sum_{j = 1}^n \|P_j\|_* \|Q_j\|
\leq \sum_{j = 1}^n \|P\|_{*, i} \|Q\|_{*, i}.
}
Now, since $\nabla \psi^*$ is, by assumption, $L_{\psi^*}$-Lipschitz, we have $\|\nabla \psi^*\|_i \leq L_{\psi^*}$, and by definition~\eqref{eq:sigma_star} of~$\sigma^*$, $\|\Sigma(x, t)\|_* \leq \sigma_*^2(t)$ for all $x$, therefore for all $x \in E$, $z \in E^*$, and $t \geq t_0$, 
\[
\tr(\Sigma(x, t) \nabla^2 \psi^*(z)) \leq n L_{\psi^*} \sigma_*^2(t).
\]
Combining the previous inequalities, we obtain the desired bound.
\end{proof}

Comparing the bound of Lemma~\ref{lem:lyap_bound_sto} to its deterministic counterpart of Lemma~\ref{lem:lyap_bound}, we can see that in the stochastic case, we have two additional terms: a drift term proportional to $\sigma_*^2(t)$, which is due to the It\^o correction term, and a volatility term given by $\braket{V(t)}{dB(t)}$.

To illustrate these terms, we generate solution trajectories for the sum-exponential example of Section~\ref{sec:example}, for both the deterministic and stochastic dynamics, and plot the values of the objective function and the energy function. For simplicity, we consider a constant volatility $\sigma(x, t) = \sigma_0 = .1$, a linear energy rate $r(t) = t$, and a sensitivity $\frac{1}{s(t)} = \frac{1}{\sqrt t}$. The primal and dual weights are $\eta(t) = \dot r(t) = 1$ and $a(t) = \frac{\eta(t)}{r(t)} = \frac{1}{t}$. For the stochastic dynamics, we plot the mean and standard deviation of $100$ sample trajectories. The results are given in Figure~\ref{fig:AMD_SAMD} below.

\begin{figure}[h]
\centering
\includegraphics[width=\textwidth]{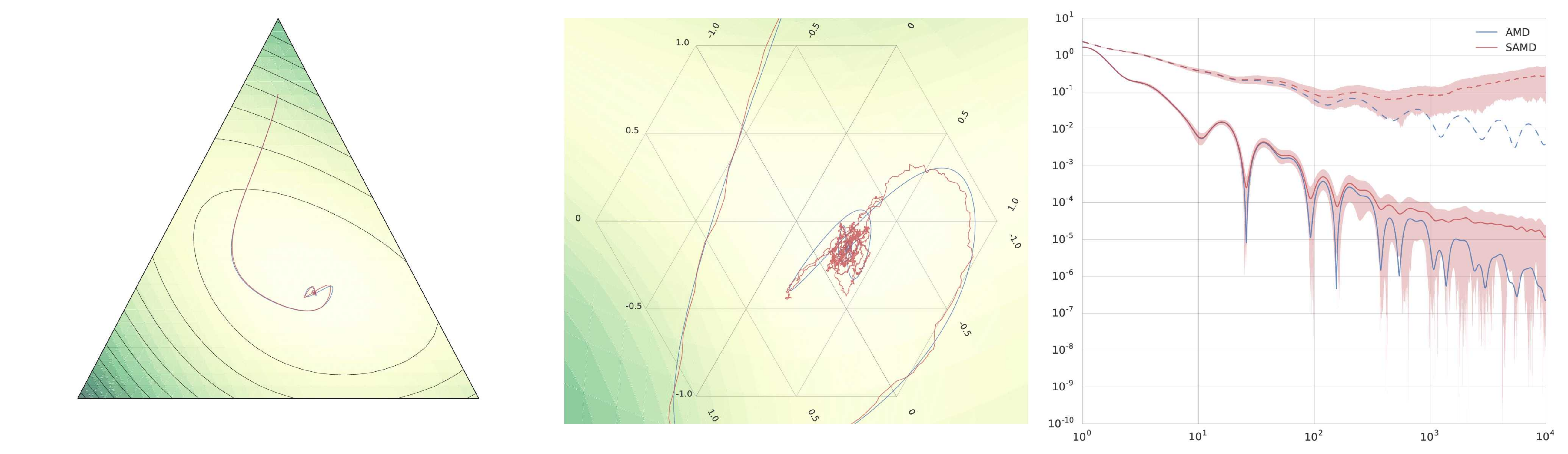}
\caption{Illustration of the differences between the deterministic and stochastic variants of accelerated mirror descent dynamics. The right plot shows the values of the objective function $f(X(t)) - f(x^\star)$ (solid lines) and the energy function $L(X(t), Z(t), t)$ (dashed lines) as a function of $t$. For the stochastic dynamics, we generate $100$ sample trajectories and plot the mean and standard deviation. The difference in mean energy values illustrates the drift term (captured by the It\^o correction term), and the standard deviation illustrates the volatility term. We also plot the primal and dual trajectories (for a single sample) in the left and center figures, respectively.}
\label{fig:AMD_SAMD}
\end{figure}
\begin{figure}[h]
\centering
\includegraphics[width=\textwidth]{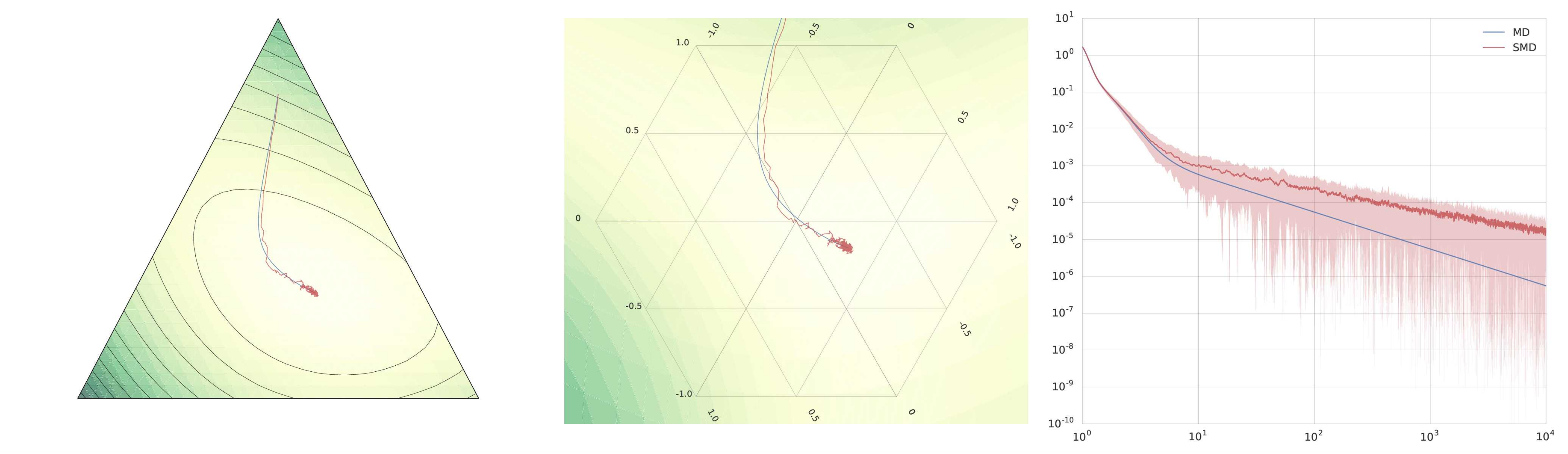}
\caption{Illustration of the differences between the deterministic and stochastic variants of (non-accelerated) mirror descent dynamics. The stochastic dynamics exhibit a slower convergence rate.}
\label{fig:MD_SMD}
\end{figure}

For the sake of comparison, we also generate a similar simulation for (non-accelerated) mirror descent dynamics in Figure~\ref{fig:MD_SMD}, using the same sensitivity $\frac{1}{s(t)} = \frac{1}{\sqrt{t}}$. Comparing Figures~\ref{fig:AMD_SAMD} and~\ref{fig:MD_SMD}, we can already observe some qualitative differences introduced by acceleration: the accelerated dynamics exhibit faster convergence, accompanied with typical oscillations around the minimizer. Note that these oscillations are not due to discretization or noise in the gradient estimate (as can be observed e.g. in discrete gradient descent with large step sizes). The oscillations are rather a property of the continuous-time accelerated dynamics. Besides, acceleration also seems to reduce the effect of noise on the primal trajectory (which appears visually smoother than its non-accelerated counterpart). To give some intuition, consider the following informal argument (which will be formalized in the results of Section~\ref{sec:convergence}). In the case of mirror descent, the primal variable is obtained as the mirror of $Z(t)/s(t)$, where
\al{
\frac{Z(t)}{s(t)} = -\frac{1}{s(t)}\int_{t_0}^t \nabla f(X(\tau))d\tau + \frac{1}{s(t)}\int_{t_0}^t \sigma(X(\tau), \tau) dB(\tau),
}
and the noise is cumulated in the It\^o martingale term $\int_{t_0}^t \sigma(X(\tau), \tau) dB(\tau)$. In the case of accelerated mirror descent, there are two main differences:
\begin{enumerate}
\item First, using a time-varying dual rate $\eta(t)$ in $\SAMD$ leads to a non-linear accumulation of noise in the It\^o martingale $\int_{t_0}^t \eta(\tau)\sigma(X(\tau), \tau)dB(\tau)$.
\item Second, due to the averaging in the primal space (given by the integral~\eqref{eq:primal_integral_form}), the smaller noise in the past trajectory results in a smaller noise of the weighted average (similarly to acceleration by averaging of~\cite{polyak1992acceleration} in discrete time). 
\end{enumerate}
The combined effect of averaging in the dual space (using $\eta(t)$) and in the primal space (using $a(t)$) will be apparent in Section~\ref{sec:convergence} when we derive explicit convergence rates.

\subsubsection*{Bounding the It\^o martingale term}
Integrating the bound of Lemma~\ref{lem:lyap_bound_sto} will allow us to bound changes in energy. This bound will involve the It\^o martingale $\int_{t_0}^t \braket{V(\tau)}{dB(\tau)}$, where $V$ is defined in~\eqref{eq:v}. In order to control this term, we give, in the following lemma, an asymptotic envelope (a consequence of the law of the iterated logarithm).
\begin{lemma}
\label{lem:martingale_bound}
Suppose Assumptions~\ref{assumption:compact} and \ref{assumption:volatility} hold. Let $b(t) = \int_{t_0}^t \eta^2(\tau)\sigma_*^2(\tau) d\tau$. Then
\begin{align}
\int_{t_0}^t \braket{V(\tau)}{dB(\tau)} = \Ocal(\sqrt{b(t) \log\log b(t)}) && \text{a.s. as $t \to \infty$}.
\end{align}
\end{lemma}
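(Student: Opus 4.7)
The plan is to recognize the Itô integral $M(t) := \int_{t_0}^t \langle V(\tau), dB(\tau)\rangle$ as a continuous local martingale and apply the law of the iterated logarithm (LIL) for such martingales, which follows from the Dambis--Dubins--Schwarz representation as a time-changed Brownian motion. Specifically, the LIL yields
\[
\limsup_{t \to \infty} \frac{|M(t)|}{\sqrt{2 [M]_t \log \log [M]_t}} \le 1 \quad \text{a.s.}
\]
on the event $\{[M]_\infty = \infty\}$, while on the event $\{[M]_\infty < \infty\}$ the martingale $M$ converges a.s.\ to a finite limit, making the $\bigO$ bound trivial. So the entire argument reduces to controlling the quadratic variation $[M]_t = \int_{t_0}^t \|V(\tau)\|_2^2\, d\tau$ by $b(t)$ up to a multiplicative constant.

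The next step is to bound $\|V(\tau)\|_2$ pointwise. By definition $V(t) = -\eta(t)\sigma(X(t),t)^T u(t)$ where $u(t) := \nabla\psi^*(Z(t)/s(t)) - \nabla\psi^*(z^\star)$. Since $\nabla\psi^*$ takes values in $\Xcal$ (Proposition~\ref{prop:mirror_map}) and $\Xcal$ is compact (Assumption~\ref{assumption:compact}), we have $\|u(t)\| \le \diam(\Xcal)$. Using the identity $\|\sigma^T u\|_2^2 = u^T \Sigma u$, together with equivalence of norms on $\Rbb^n$ (to pass between $\|\cdot\|_2$ and the dual pair $\|\cdot\|, \|\cdot\|_*$ used to define $\sigma_*^2$), one obtains a constant $C$ depending only on $\diam(\Xcal)$ and the norm-equivalence constants such that
\[
\|V(\tau)\|_2^2 \le C\, \eta^2(\tau)\, \sigma_*^2(\tau).
\]
Integrating gives $[M]_t \le C\, b(t)$ for every $t \ge t_0$.

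Combining the two ingredients: on $\{b(\infty) = \infty\}$, since $[M]_t$ is monotone and dominated by $C b(t)$, either $[M]_\infty < \infty$ (and $M$ converges, giving the bound trivially), or $[M]_\infty = \infty$ and the LIL applies; using $[M]_t \le C b(t)$ and monotonicity of $x \mapsto x \log\log x$ then yields $|M(t)| = \bigO(\sqrt{b(t)\log\log b(t)})$ a.s. On $\{b(\infty) < \infty\}$ the bound is again immediate. The main obstacle is the routine but slightly delicate bookkeeping with the various norms, namely reconciling the Euclidean norm implicit in $[M]_t$ with the dual norm pair used in defining $\sigma_*^2$; Assumption~\ref{assumption:compact} plus finite-dimensionality makes this harmless, which is why compactness of $\Xcal$ is essential to the statement.
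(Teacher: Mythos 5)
Your proposal is correct and follows essentially the same route as the paper: the Dambis--Dubins--Schwarz time change combined with the law of the iterated logarithm, after bounding the quadratic variation $[M]_t$ by a constant multiple of $b(t)$ via compactness of $\Xcal$ and the definition of $\sigma_*^2$. The only difference is cosmetic --- you make the norm-equivalence bookkeeping explicit where the paper works directly with the induced norm bound $\|\Sigma(x,t)\|_i \le \sigma_*^2(t)$ --- and your case split on whether $[M]_\infty$ is finite matches the paper's argument exactly.
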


\begin{proof}
Let us denote the It\^o martingale by $\Vcal(t) = \int_{t_0}^t \braket{V(\tau)}{dB(\tau)} = \sum_{i = 1}^n \int_{t_0}^t V_i(\tau)dB_i(\tau)$, and its quadratic variation by $\beta(t) = [\Vcal(t), \Vcal(t)]$. By definition of $\Vcal$, we have
\[
d\beta = \sum_{i = 1}^n\sum_{j = 1}^n V_iV_jd[B_i, B_j] = \sum_{i = 1}^n V_i^2 dt = \braket{V}{V} dt.
\]
By the Dambis-Dubins-Schwartz time change theorem (e.g. Corollary 8.5.4 in~\citep{oksendal2003stochastic}), there exists a Wiener process $\hat B$ such that
\begin{equation}
\label{eq:time_change}
\Vcal(t) = \hat B(\beta(t)).
\end{equation}
We now proceed to bound $\beta(t)$. Using the expression~\eqref{eq:v} of $V$, we have
\[
\braket{V}{V} = \eta^2(t) \Delta^T(t) \Sigma(X, t) \Delta(t),
\]
where we defined $\Delta(t) = \nabla \psi^*(Z(t)/s(t)) - \nabla \psi^*(z^\star)$. Since the mirror map has values in $\Xcal$ and $\Xcal$ is assumed compact, the diameter $D = \sup_{x, x' \in \Xcal} \|x - x'\|$ is finite, and $\Delta(t) \leq D$ for all $t$. Thus, $d\beta(t) \leq D^2 \eta(t)^2 \sigma_*^2(t) dt$, and integrating,
\begin{align}
\label{eq:covariation_bound}
\beta(t) \leq D^2 b(t) & \text{\ \ a.s.}
\end{align}
Since $\beta(t)$ is a non-decreasing process, two cases are possible: if $\lim_{t \to \infty} \beta(t)$ is finite, then $\limsup_{t \to \infty}|\Vcal(t)|$ is a.s. finite and the result follows immediately. If $\lim_{t \to \infty} \beta(t) = \infty$, then
\al{
\limsup_{t \to \infty} \frac{\Vcal(t)}{\sqrt{b(t) \log \log b(t)}} \leq \limsup_{t \to \infty} \frac{\hat B(\beta(t))}{\sqrt{\frac{\beta(t)}{D^2} \log \log \frac{\beta(t)}{D^2}}} = D \sqrt 2 && \text{a.s.}
}
where the inequality combines~\eqref{eq:time_change} and \eqref{eq:covariation_bound}, and the equality is by the law of the iterated logarithm.
\end{proof}

\section{Convergence results}
\label{sec:convergence}

\subsection{Almost sure convergence}
Equipped with Lemma~\ref{lem:lyap_bound_sto} and Lemma~\ref{lem:martingale_bound}, which bound, respectively, the rate of change of the energy and the asymptotic growth of the martingale term, we are now ready to prove our convergence results.
\begin{theorem}
\label{thm:as}
Suppose that Assumptions~\ref{assumption:lip}, \ref{assumption:compact}, and \ref{assumption:volatility} hold. Suppose that $\eta(t) \sigma_*(t) = o(1/\sqrt{\log t})$, and that $\int_{t_0}^t \eta(\tau)d\tau$ dominates $b(t)$ and $\sqrt{b(t) \log\log b(t)}$ (where $b(t) = \int_{t_0}^t \eta^2(\tau) \sigma_*^2(\tau)d\tau$ as defined in Lemma~\ref{lem:martingale_bound}). Consider $\SAMD$ dynamics with $r = s = 1$ and $a = \eta$. Let $(X(t), Z(t))$ be the unique continuous solution of $\SAMD_{\eta, \eta, 1}$. Then
\al{
\lim_{t \to \infty} f(X(t)) - f(x^\star) = 0 &&\text{a.s.}
}
\end{theorem}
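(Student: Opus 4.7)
The plan is to specialize Lemma~\ref{lem:lyap_bound_sto} to the choice $r = s = 1$, $a = \eta$ (so $\dot r = \dot s = 0$), integrate the resulting bound on $dL$, and then upgrade a weighted time-average convergence to pointwise convergence using the Lipschitz regularity of the primal trajectory. With these parameters the energy is simply $L(x,z) = (f(x)-f(x^\star)) + D_{\psi^*}(z,z^\star) \geq 0$, and Lemma~\ref{lem:lyap_bound_sto} collapses to
\[
dL(X(t),Z(t),t) \leq \left[-\eta(t)(f(X(t)) - f(x^\star)) + \frac{n L_{\psi^*}}{2}\,\eta(t)^2\sigma_*^2(t)\right]dt + \braket{V(t)}{dB(t)}.
\]

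First I would integrate this inequality from $t_0$ to $t$ and invoke $L \geq 0$ to obtain
\[
\int_{t_0}^t \eta(\tau)(f(X(\tau)) - f(x^\star))\,d\tau \;\leq\; L(x_0,z_0,t_0) + \frac{n L_{\psi^*}}{2}\,b(t) + \Vcal(t),
\]
where $\Vcal(t) = \int_{t_0}^t \braket{V(\tau)}{dB(\tau)}$. Next I would apply Lemma~\ref{lem:martingale_bound}, which yields $\Vcal(t) = \Ocal(\sqrt{b(t)\log\log b(t)})$ almost surely. Dividing both sides by $\int_{t_0}^t \eta(\tau)\,d\tau$ and invoking the hypothesis that this integral dominates both $b(t)$ and $\sqrt{b(t)\log\log b(t)}$ (which in particular forces $\int_{t_0}^\infty \eta\,d\tau = \infty$), every term on the right-hand side is $o\bigl(\int_{t_0}^t \eta\,d\tau\bigr)$. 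Since the integrand on the left is non-negative, we obtain the weighted Ces\`aro statement
\[
\frac{\int_{t_0}^t \eta(\tau)(f(X(\tau)) - f(x^\star))\,d\tau}{\int_{t_0}^t \eta(\tau)\,d\tau} \longrightarrow 0 \quad \text{a.s.}
\]

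The final and key step is to promote this statement to pointwise convergence of $h(t) := f(X(t)) - f(x^\star) \geq 0$. The primal SDE of $\SAMD$ contains no Brownian increment, so $X(t)$ is in fact continuously differentiable and $\|\dot X(t)\| \leq \eta(t) D$ with $D = \diam(\Xcal) < \infty$ by Assumption~\ref{assumption:compact}; since $\nabla f$ is continuous on the compact set $\Xcal$, $f$ is Lipschitz there with some constant $M$, so $|\dot h(t)| \leq MD\,\eta(t)$. Introducing the $\eta$-clock $\tau(t) = \int_{t_0}^t \eta(s)\,ds$, the reparametrized function $\tilde h(\tau) := h(t(\tau))$ is $MD$-Lipschitz on $[0,\infty)$, and the Ces\`aro statement above becomes $T^{-1}\int_0^T \tilde h(\tau)\,d\tau \to 0$. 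A standard argument then gives $\tilde h(\tau) \to 0$: any spike $\tilde h(\tau_k) \geq 2\varepsilon$ persists, by the Lipschitz bound, on an interval of length at least $\varepsilon/(MD)$ on which $\tilde h \geq \varepsilon$, and infinitely many disjoint such intervals contradict the vanishing of the Ces\`aro average.

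The main obstacle is precisely this last step. Lemmas~\ref{lem:lyap_bound_sto} and~\ref{lem:martingale_bound} on their own only control a weighted integral of $f(X(t)) - f(x^\star)$ and hence only yield $\liminf_{t \to \infty} h(t) = 0$ almost surely; bridging the gap to $\lim h = 0$ crucially exploits the fact that noise enters $\SAMD$ only through the dual equation, so that the primal trajectory inherits Lipschitz regularity from the compactness of $\Xcal$ and this regularity survives the $\eta$-time reparametrization used to match the weighting in the Ces\`aro statement.
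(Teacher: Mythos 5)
Your computation up to the weighted Ces\`aro statement is correct, and it is in substance step~(iii) of the paper's own argument (the paper phrases it as: the trajectory cannot remain outside a sublevel set of the energy for an unbounded interval of time). The gap is in the last step. For a non-negative $MD$-Lipschitz function $\tilde h$ on $[0,\infty)$, the convergence $T^{-1}\int_0^T \tilde h(\tau)\,d\tau \to 0$ does \emph{not} imply $\tilde h(\tau) \to 0$. Infinitely many disjoint intervals of fixed length $\varepsilon/(MD)$ on which $\tilde h \ge \varepsilon$ only force $\int_0^\infty \tilde h\,d\tau = \infty$, which is perfectly compatible with the Ces\`aro average vanishing: a train of triangular bumps of height $2\varepsilon$ and fixed width centred at $\tau_k = k^2$ is Lipschitz, has Ces\`aro average $\Ocal(T^{-1/2})$, and yet has $\limsup_{\tau\to\infty}\tilde h(\tau) = 2\varepsilon$. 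What your argument actually establishes is that the set $\{\tau : \tilde h(\tau) \ge \varepsilon\}$ has density zero, hence $\liminf_{t\to\infty} (f(X(t)) - f(x^\star)) = 0$ a.s.\ --- exactly the point you flag as the ``main obstacle,'' but the obstacle is not removed by Lipschitz regularity alone.

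A telling symptom is that your proof never uses the hypothesis $\eta(t)\sigma_*(t) = o(1/\sqrt{\log t})$. The paper needs it precisely to close this gap: it is used (via Proposition~4.6 of Bena\"im) to show that $(X(t), Z(t))$ is an asymptotic pseudo-trajectory of the deterministic flow $\AMD_{\eta,\eta,1}$, and it is the combination of this shadowing property with the forward invariance and uniform attractivity of the sublevel sets $V_\epsilon$ of the infimised energy $\bar L$ under the deterministic flow that prevents recurrent excursions: once the process enters $V_{\epsilon/3}$ (guaranteed by the $\liminf$-type step you did prove), it stays in $V_\epsilon$ forever. Note also that the object one must prevent from spiking is the energy, not $f(X(t)) - f(x^\star)$ alone: over an excursion the martingale term $\int \braket{V}{dB}$ and the It\^o correction can inject a non-vanishing amount of energy into the Bregman term (recall $b(t)$ is allowed to be unbounded, e.g.\ $b(t) = \Theta(\log t)$ in the example following the theorem), which can later be converted back into function-value gap; your bound on $\|\dot X\|$ in the $\eta$-clock controls none of this. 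To repair the proof you would need an argument bounding the oscillation of $\bar L$ between successive visits to $V_\epsilon$, which is exactly what the asymptotic-pseudo-trajectory step supplies.
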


The result of Theorem~\ref{thm:as} makes it possible to guarantee almost sure convergence (albeit without an explicit convergence rate) when the noise is persistent ($\sigma_*(t)$ is constant, or even increasing). To give a concrete example, suppose $\sigma_*(t) = \Ocal(t^{\alpha})$ (with $\alpha < \frac{1}{2}$ but can be positive), and let $\eta(t) = t^{-\alpha - \frac{1}{2}}$. Then $\eta(t)\sigma_*(t) = \Ocal(t^{-\frac{1}{2}})$, $\int_{t_0}^t \eta(\tau)d\tau = \Omega(t^{-\alpha+\frac{1}{2}})$, $b(t) = \Ocal(\log t)$, and $\sqrt{b(t) \log \log b(t)} = \Ocal(\sqrt{\log t \log \log \log t})$, and the conditions of the theorem are satisfied. Therefore, with the appropriate choice of learning rate $\eta(t)$ (and the corresponding averaging in the primal space given by $a(t) = \eta(t)$), one can recover almost sure convergence.

We start by giving an outline of the proof, which is similar to that of Theorem~4.1 in \citep{mertikopoulos2016convergence}, with some significant changes (we do not make the assumption that the minimizer is unique, and most importantly, the dynamics and the energy function are different, since averaging is essential in our case to handle the noise, since we do not assume that the volatility bound $\sigma_*(t)$ is vanishing). The argument proceeds in the following steps:

\begin{enumerate}[(i)]
\item The first step is to prove that under the conditions of the theorem, the continuous solution of $\SAMD_{\eta, \eta, 1}$, $(X(t), Z(t))$, is an asymptotic pseudo trajectory (a notion defined and studied by~\cite{benaim1996asymptotic} and \cite{benaim1999dynamics}) of the deterministic flow $\AMD_{\eta, \eta, 1}$. The definition is given below, but intuitively, this means that for large enough times, the sample paths of the process $(X(t), Z(t))$ get arbitrarily close to $(x(t), z(t))$, the solution trajectories of the deterministic dynamics.

\begin{definition}[Asymptotic Pseudo Trajectory]
Let $\Phi_t : \Xcal \times E^* \to \Xcal \times E^*$ be the semi-flow associated to the deterministic dynamics $\AMD_{\eta, \eta, 1}$, that is, $(x(t), z(t)) = \Phi_t(x_0, z_0)$ is the solution of the deterministic dynamics $\AMD_{\eta, \eta, 1}$ with initial condition $(x_0, z_0)$. A continuous function $t \mapsto (X(t), Z(t)) \in \Xcal \times E^*$ is an asymptotic pseudo trajectory (APT) for $\Phi_t$ if for all $T > 0$,
\[
\lim_{t \to \infty} \sup_{0 \leq h \leq T} d((X(t+h), Z(t+h)), \Phi_h(X(t), Z(t))) = 0,
\]
where $d$ is a distance on $\Xcal\times E^*$, e.g. $d((x, z), (x', z')) = \|x - x'\| + \|z - z'\|_*$.
\end{definition}

\item The second step is to show that under the deterministic flow, the energy $L$ decreases enough for large enough times.

\item The third step is to prove that under the stochastic process, $f(X(t))$ cannot stay bounded away from $f(x^\star)$ for all~$t$.
\end{enumerate}
Finally, combining these steps, we argue that by (iii), $f(X(t))$ eventually gets close to $f(x^\star)$, then stays close by virtue of the asymptotic pseudo trajectory property (i), and the decrease of the energy under the deterministic flow (ii).


\begin{proof}[Proof of Theorem~\ref{thm:as}]
We start by specializing the energy function and the bounds on its time derivative to the setting of Theorem~\ref{thm:as}. Under the assumptions of the theorem ($r(t) = s(t) = 1$), $L(x, z, t)$ simplifies to
\[
L_{z^\star}(x, z) = f(x) - f(x^\star) + D_{\psi^*}(z, z^\star),
\]
where we added the subscript $z_\star$ to insist on the fact that the energy function is ``anchored'' at $z^\star$. Note that since the minimizer is not necessarily unique, $L_{z^\star}(x(t), z(t))$ does not necessarily converge to $0$ for arbitrary $z^\star$. Thus, we define and use
\[
\bar L(x, z) = \inf_{z^\star \in \Zcal^\star} L_{z^\star}(x, z),
\]
where $\Zcal^\star = \{z \in E^* : \nabla \psi^*(z^\star) \in \Xcal^\star\} = \cup_{x^\star \in \Xcal^\star} \partial \psi(x^\star)$ (by the fact that $x^\star \in \partial \psi^*(z^\star)$ if and only if $z^\star \in \partial \psi(x^\star)$).

Next, we observe that since $\nabla f$ is $L_f$-Lipschitz and $\nabla \psi^*$ is $L_{\psi^*}$-Lipschitz, we can bound the change of the energy due to small displacements in $(x, z)$: we will use the fact that for any convex function $f$ with $L$-Lipschitz gradient, $f(x+\delta_x) \leq f(x) + \braket{\nabla f(x)}{\delta_x} + \frac{L}{2}\|\delta_x\|^2$. We have
\begin{align}
L_{z^\star}&(x+\delta_x, z + \delta_z) \notag \\
&= f(x+\delta_x) - f(x^\star) + \psi^*(z+\delta_z) - \psi^*(z^\star) - \braket{\nabla \psi^*(z^\star)}{z+\delta z - z^\star} \notag\\
&\leq f(x) + \braket{\nabla f(x)}{\delta_x} + \frac{L_f}{2}\|\delta_x\|^2 - f(x^\star) \notag\\
&\quad + \psi^*(z) + \braket{\nabla \psi^*(z)}{\delta z} + \frac{L_{\psi^*}}{2}\|\delta_z\|_*^2 - \psi^*(z^\star) -\braket{\nabla \psi^*(z^\star)}{z+\delta z - z^\star} \notag\\
&= L_{z^\star}(x, z) + \braket{\nabla f(x)}{\delta_x} + \frac{L_f}{2}\|\delta_x\|^2 + \braket{\nabla \psi^*(z) - \nabla \psi^*(z^\star)}{\delta z} + \frac{L_{\psi^*}}{2}\|\delta_z\|_*^2 \notag\\
&\leq L_{z^\star}(x, z) + G\|\delta_x\| + \frac{L_f}{2}\|\delta_x\|^2 + D\|\delta z\|_* + \frac{L_{\psi^*}}{2}\|\delta_z\|_*^2 \label{eq:lyap_lip}
\end{align}
where in the last inequality, $G = \sup_{x \in \Xcal} \|\nabla f(x)\|_*$ (which is bounded since $\nabla f$ is continuous and $\Xcal$ is compact), and $D$ is the diameter of $\Xcal$.

For the deterministic dynamics, the bound of Lemma~\ref{lem:lyap_bound} becomes
\begin{equation}
\label{eq:lyap_bound}
\frac{d}{dt} L_{z^\star}(x(t), z(t)) \leq -\eta(t)(f(x(t)) - f(x^\star)),
\end{equation}
and for the stochastic dynamics, the bound of Lemma~\ref{lem:lyap_bound_sto} becomes
\begin{equation}
\label{eq:lyap_bound_sto}
dL_{z^\star}(X(t), Z(t)) \leq \left[-\eta(t)(f(X(t)) - f(x^\star)) + \frac{L_{\psi^*}}{2} \eta^2(t)\sigma_*^2(t) \right] dt + \braket{V(t)}{dB(t)}.
\end{equation}

We now proceed according to the steps of the proof outline. We give an illustration of the argument in Figure~\ref{fig:APT_illustration}
\begin{enumerate}[(i)]
\item We start by proving that under the conditions of Theorem~\ref{thm:as}, the stochastic process $(X(t), Z(t))$ (the unique continuous solution of the stochastic dynamics $\SAMD_{\eta, \eta, 1}$) is an APT for the deterministic semi-flow of $\AMD_{\eta, \eta, 1}$. Since the volatility term is $-\eta(t) \sigma(X(t), t) dB(t)$, it suffices, by Proposition~4.6\footnote{Proposition 4.6 in \citep{benaim1999dynamics} is stated in terms of solutions to a martingale problem, which is equivalent to solutions to the SDE, see for example~\citep{stroock1972}.} in~\cite{benaim1999dynamics}, to show that $\int_{t_0}^\infty e^{-\frac{c}{\eta^2(t)\sigma_*^2(t)}}$ is finite for all $c>0$. But we have, by assumption, $\eta(t)\sigma_*(t) = o(1/\sqrt{\log t})$, thus $\eta^2(t)\sigma_*^2(t) = \epsilon(t) / \log t$ with $\lim_{t \to \infty} \epsilon(t) = 0$, and $\int_{t_0}^{\infty} e^{-\frac{c}{\eta^2(t)\sigma_*^2(t)}}dt = \int_{t_0}^{\infty} e^{-\frac{c \log t}{\epsilon(t)}}dt = \int_{t_0}^{\infty} t^{-\frac{c}{\epsilon(t)}}dt$, which is finite.

We also show that by virtue of the APT property (and the fact that the energy function is Lipschitz), we can bound the difference between the energy $\bar L$ along deterministic and stochastic solutions starting at the same point. Indeed, inequality~\eqref{eq:lyap_lip} shows that $L_{z^\star}(x+\delta_x, z + \delta_z) - L_{z^\star}(x, z) \leq \epsilon$ whenever $\max(\|\delta_x\|, \|\delta_z\|_*)$ is small enough. Therefore, by the APT property, for all $\epsilon > 0$ and all $T>0$, there exists $t_T$ such that for all $t \geq t_T$ and all $h \in [0, T]$,
\[
L_{z^\star}(X(t+h), Z(t+h)) - L_{z^\star}(\Phi_h(X(t), Z(t))) \leq \epsilon/2,
\]
and this holds uniformly over $z^\star$. In particular, since $\bar L$ is defined to be the infimum over all $z^\star$, we can find some $z^\star_0$ such that $L_{z^\star_0}(\Phi_h(X(t), Z(t))) \leq \bar L(\Phi_h(X(t), Z(t))) + \epsilon/2$, then
\al{
\bar L(X(t+h), Z(t+h)) 
&\leq L_{z^\star_0}(X(t+h), Z(t+h)) \\
&\leq L_{z^\star_0}(\Phi_h(X(t), Z(t))) + \epsilon/2 \\
&\leq \bar L(\Phi_h(X(t), Z(t))) + \epsilon.
}

\item Next, we prove a stability property of the energy for the deterministic dynamics. Fix $\epsilon > 0$ and let $V_\epsilon = \{(x, z) : \bar L(x, z) \leq \epsilon\}$. Then $\Phi_t(x, z) \in V_\epsilon$ if $(x, z) \in V_\epsilon$ (since $\bar L$ is non-increasing, as the infimum of non-increasing functions). Besides, we claim that there exists $T > 0$ such that for all $t \geq T$,
\[
\bar L(\Phi_t(x, z)) \leq \min(\epsilon, \bar L(x, z) - \epsilon).
\]
Indeed, by continuity of $f$, there exists $c > 0$ such that $f(x) - f(x^\star) > c$ for all $(x, z) \notin V_{\epsilon}$, and integrating the bound~\eqref{eq:lyap_bound}, we have, for all $z^\star$,
\[
L_{z^\star}(\Phi_t(x, z)) \leq L_{z^\star}(x, z) -c\int_{T}^t \eta(\tau)d\tau,
\]
therefore, setting $T = 2\epsilon / c$, we know that either $\Phi_t(x, z) \in V_\epsilon$ for some $t_1 \leq T$, in which case the trajectory remains in $V_\epsilon$ after $t_1$, or $\Phi_t(x, z)$ remains outside of $V_\epsilon$, in which case $L_{z^\star}(\Phi_T(x, z)) \leq L_{z^\star}(x, z) -2\epsilon$ for all $z^\star$. Since $\bar L$ is defined to be the infimum over all $z^\star$, we can find some $z^\star_0$ such that $L_{z^\star_0}(x, z) \leq \bar L(x, z) + \epsilon$. Then
\[
\bar L(\Phi_T(x, z)) \leq L_{z^\star_0}(\Phi_T(x, z)) \leq L_{z^\star_0}(x, z) - 2\epsilon \leq \bar L(x, z) - \epsilon.
\]
\item Next, we prove that the stochastic process cannot stay outside of $V_\epsilon$ for unbounded intervals of time. Indeed, fix $\epsilon > 0$, and $T > 0$, and suppose that with positive probability, $(X(t), Z(t))$ remains outside $V_\epsilon$ for all $t \geq T$. Then by continuity of $f$, there exists $c>0$ such that $f(X(t)) - f(x^\star) \geq c$ for all $t \geq T$, and integrating the bound~\eqref{eq:lyap_bound_sto} gives
\[
L_{z^\star}(X(t), Z(t)) - L_{z^\star}(X(T), Z(T)) \leq -c \int_{T}^t \eta(\tau) d\tau + \Ocal(b(t)) + \Ocal(\sqrt{b(t) \log\log b(t)}),
\]
where the right-hand side converges to $-\infty$ since, by assumption, $\int_{t_0}^t \eta(\tau)d\tau$ dominates $b(t)$ and $\sqrt{b(t)\log\log b(t)}$. This would imply that, with positive probability, $L(X(t), Z(t), t) \to -\infty$, a contradiction. Therefore, for all $\epsilon > 0$ and for all $T$, there exists $t \geq T$ such that $(X(T), Z(T)) \in V_\epsilon$ a.s.
\end{enumerate}

\begin{figure}[h]
\centering
\includegraphics[width=\textwidth]{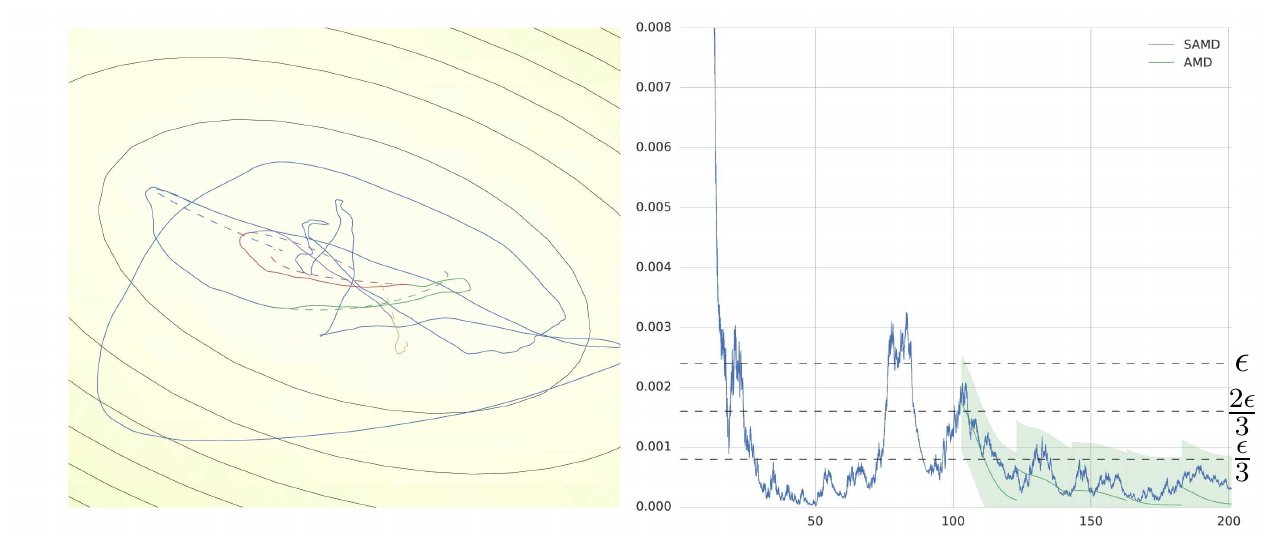}
\caption{Illustration of the proof of Theorem~\ref{thm:as}, with $\epsilon = 2.4 \ 10^{-3}$, and $T_0 = 20$. The right plot shows in blue the value of the energy function $\bar L(X(t), Z(t))$ along one sample trajectory $(X(t), Z(t))$ of the SAMD dynamics; and in green the energy function along solutions of the deterministic ODE $\{(x_k(t), z_k(t)), {t \in [T_2 + kT_0, T_2 + (k+1)T_0]}\}$, initialized at $(X(T_2 + kT_0), Z(T_2 + kT_0))$. We also highlight a cylinder of radius $\frac{\epsilon}{3}$ centered at the deterministic energy. Note that for large enough times, the sample path of the stochastic dynamics remains within the cylinder. The dashed lines show the energy levels $\frac{\epsilon}{3}$, $\frac{2\epsilon}{3}$, and $\epsilon$. Finally, the left plot visualizes these trajectories in the primal space (where we used a different color for each interval $[T_2 + kT_0, T_2 + (k+1)T_0]$).}
\label{fig:APT_illustration}
\end{figure}

We are now ready to put together the different parts of the argument. Fix $\epsilon > 0$.\\
By (ii), there exists $T_0$ such that
\begin{equation}
\label{eq:proof_1}
\bar L(\Phi_{T_0}(x), \Phi_{T_0}(z)) \leq \min(\epsilon/3, \bar L(x, z) - \epsilon/3).
\end{equation}
By (i) there exists $T_1$ such that for $t \geq T_1$ and for all $h \in [0, T_0]$,
\begin{equation}
\label{eq:proof_2}
\bar L(X(t+h), Z(t+h)) \leq \bar L(\Phi_h(X(t), Z(t))) + \epsilon/3,
\end{equation}
By (iii), there exists $T_2 \geq \max(T_0, T_1)$ such that $(X(T_2), Z(T_2)) \in V_{\epsilon/3}$.

Now we show that the trajectory remains in $V_\epsilon$ for all $t \geq T_2$. Indeed, by induction on $k$, we have $\bar L(X(T_2+kT_0), Z(T_2+kT_0)) \leq {2\epsilon/3}$ for all $k \in \Nbb$ (by~\eqref{eq:proof_1} and~\eqref{eq:proof_2}), then for all $h \in [0, T_0]$,
\al{
\bar L(X(T_2+kT_0+h), Z(T_2+kT_0+h)) 
&\leq \bar L(\Phi_h(X(T_2+kT_0), Z(T_2+kT_0))) + \epsilon/3 \\
&\leq 2\epsilon/3 + \epsilon/3.
}

Since $\epsilon$ is arbitrary, this proves that for all $\epsilon$, $(X(t), Z(t))$ remains in $V_\epsilon$ for $t$ large enough, a.s. But by definition of $\bar L$, $(x, z) \in V_\epsilon$ implies that $f(x) - f(x^\star) \leq \epsilon$, which proves that $f(X(t)) - f(x^\star)$ converges to $0$.
\end{proof}

\subsection{Convergence of expected function values}

Next, we derive explicit bounds on function values.
\begin{theorem}
\label{thm:rate_exp}
Suppose that Assumptions~\ref{assumption:lip} and \ref{assumption:volatility} hold. Suppose that $a = \eta / r$ and $\eta \geq \dot r$. Let $(X(t), Z(t))$ be the unique continuous solution to $\SAMD_{\eta, \eta/r, s}$. Then for all $t \geq t_0$,
\al{
\Ebb [f(X(t))] - f(x^\star) \leq \frac{L(x_0, z_0, t_0) + \psi(x^\star) (s(t)-s(t_0)) + \frac{n L_{\psi^*}}{2} \int_{t_0}^t \frac{\eta^2(\tau)\sigma_*^2(\tau)}{s(\tau)} d\tau}{r(t)}.
}
\end{theorem}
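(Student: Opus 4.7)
The proof parallels the deterministic Corollary~\ref{cor:deterministic_rate}, with Lemma~\ref{lem:lyap_bound_sto} replacing Lemma~\ref{lem:lyap_bound}. The plan is to integrate the stochastic energy bound, take expectations to eliminate the It\^o martingale term, and then lower-bound the expected energy by $r(t)\,\Ebb[f(X(t)) - f(x^\star)]$ using the non-negativity of the Bregman divergence. First, since $\eta(t) \geq \dot r(t)$ and $f(X(t)) - f(x^\star) \geq 0$ by optimality of $x^\star$, the leading term on the right-hand side of Lemma~\ref{lem:lyap_bound_sto} is non-positive; dropping it and integrating from $t_0$ to $t$ yields
\begin{equation*}
L(X(t),Z(t),t) \leq L(x_0,z_0,t_0) + \psi(x^\star)(s(t)-s(t_0)) + \frac{n L_{\psi^*}}{2}\int_{t_0}^t \frac{\eta^2(\tau)\sigma_*^2(\tau)}{s(\tau)}\,d\tau + \int_{t_0}^t \braket{V(\tau)}{dB(\tau)}.
\end{equation*}

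Second, taking expectations eliminates the It\^o integral provided $V(\cdot)$ is square-integrable on $[t_0,t]$. Under the compactness Assumption~\ref{assumption:compact}, this is automatic, since $\nabla \psi^*$ takes values in $\Xcal$ and thus $\|V(\tau)\|_*$ is uniformly bounded by $\eta(\tau)\sigma_*(\tau)\diam(\Xcal)$. Without compactness, the same conclusion holds by a standard localization argument: introduce stopping times $\tau_n = \inf\{\tau \geq t_0 : \|Z(\tau)\|_* \geq n\}$, so that $V$ is bounded on $[t_0, t \wedge \tau_n]$, take expectations on that interval, and pass to the limit $n \to \infty$ via Fatou on the left and monotone/dominated convergence on the deterministic terms, using the finite second moments of $Z$ supplied by the existence result. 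Either way,
\begin{equation*}
\Ebb[L(X(t),Z(t),t)] \leq L(x_0,z_0,t_0) + \psi(x^\star)(s(t)-s(t_0)) + \frac{n L_{\psi^*}}{2}\int_{t_0}^t \frac{\eta^2(\tau)\sigma_*^2(\tau)}{s(\tau)}\,d\tau.
\end{equation*}

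Finally, by the definition $L(x,z,t) = r(t)(f(x)-f(x^\star)) + s(t)D_{\psi^*}(z/s(t), z^\star)$ and non-negativity of the Bregman divergence,
\begin{equation*}
\Ebb[L(X(t),Z(t),t)] \geq r(t)\,\Ebb[f(X(t)) - f(x^\star)],
\end{equation*}
and dividing the previous display by $r(t) > 0$ gives the stated inequality. The only non-mechanical step is the justification that the It\^o integral has mean zero; this is where any assumption about the growth of $V$ (and hence of $Z$) enters, and it is handled by the localization argument above. Everything else is direct bookkeeping on the bound of Lemma~\ref{lem:lyap_bound_sto}.
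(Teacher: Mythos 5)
Your proposal is correct and follows essentially the same route as the paper: integrate the bound of Lemma~\ref{lem:lyap_bound_sto}, drop the non-positive term using $\eta \geq \dot r$, take expectations to kill the It\^o martingale, and divide by $r(t)$ after discarding the non-negative Bregman term. Your extra care in justifying that the stochastic integral has zero mean (via boundedness of $V$ under compactness, or localization otherwise) is a welcome addition, since the theorem does not assume Assumption~\ref{assumption:compact} and the paper's proof passes over this point silently.
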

\begin{proof}
Integrating the bound of Lemma~\ref{lem:lyap_bound_sto}, and using the fact that $(f(X(t)) - f(x^\star))(\dot r - \eta) \leq 0$ by assumption on $\eta$, we have
\begin{equation}
\label{eq:lyap_bound_sto_int}
L(X(t), Z(t), t) - L(x_0, z_0, t_0) \leq \psi(x^\star) (s(t) - s(t_0)) + \frac{n L_{\psi^*}}{2} \int_{t_0}^t \frac{\eta^2(\tau)\sigma_*^2(\tau)}{s(\tau)} d\tau + \int_{t_0}^t \braket{V(\tau)}{dB(\tau)} ,
\end{equation}
Taking expectations, the last term vanishes since it is an It\^o martingale, and we conclude by observing that
$\Ebb[f(X(t))] - f(x^\star) \leq \Ebb [L(X(t), Z(t), t)] / r(t)$.
\end{proof}
To give a concrete example, suppose that $\sigma_*(t) = \Ocal(t^{\alpha_\sigma})$ is given, and let $r(t) = t^{\alpha_r}$ and $s(t) = t^{\alpha_s}$, $\alpha_r, \alpha_s > 0$. To simplify, we will take $\eta(t) = \dot r(t) = \alpha_r t^{\alpha_r - 1}$. Then the bound of Theorem~\ref{thm:rate_exp} shows that
$\Ebb[f(X(t))] - f(x^\star) = \Ocal(t^{\alpha_s - \alpha_r} + t^{\alpha_r + 2\alpha_\sigma - \alpha_s - 1})$. To minimize the asymptotic rate, we can choose $\alpha_s - \alpha_r = \alpha_r + 2\alpha_\sigma - \alpha_s - 1$, i.e. $\alpha_r + \alpha_\sigma - \alpha_s - \frac{1}{2} = 0$ (it is always possible to find such $\alpha_r, \alpha_s > 0$), and the resulting rate is $\Ocal(t^{\alpha_\sigma - \frac{1}{2}})$.
\begin{corollary}
\label{cor:sto_rate}
Suppose that Assumptions~\ref{assumption:lip} and \ref{assumption:volatility} hold. Suppose that $\sigma_*(t) = \Ocal(t^{\alpha_\sigma})$, $\alpha_\sigma < \frac{1}{2}$. Consider $\SAMD_{\eta, \eta/r, s}$ dynamics with $r(t) = t^{\alpha_r}$, $\eta(t) = \dot r(t) = \alpha_r t^{\alpha_r - 1}$, and $s(t) = t^{\alpha_s}$, and suppose that
\[
\alpha_r = \alpha_s - \alpha_\sigma + \frac{1}{2}
\]
Then
\al{
\Ebb[f(X(t))] - f(x^\star) = \Ocal(t^{\alpha_\sigma - \frac{1}{2}})  && \text{as $t \to \infty$.}
}
\end{corollary}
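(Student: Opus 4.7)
The plan is to specialize the bound of Theorem~\ref{thm:rate_exp} to the prescribed one-parameter families $r(t)=t^{\alpha_r}$, $\eta(t)=\alpha_r t^{\alpha_r-1}$, $s(t)=t^{\alpha_s}$ (with implicitly $\alpha_r,\alpha_s>0$ from the setup preceding the corollary), and with $\sigma_*^2(\tau)=\Ocal(\tau^{2\alpha_\sigma})$, and then estimate each of the three contributions in the numerator
\[
\Ebb[f(X(t))] - f(x^\star) \leq \frac{L(x_0,z_0,t_0) + \psi(x^\star)(s(t)-s(t_0)) + \frac{nL_{\psi^*}}{2}\int_{t_0}^t \frac{\eta^2(\tau)\sigma_*^2(\tau)}{s(\tau)}d\tau}{r(t)}
\]
separately. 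This is a mechanical calculation rather than a conceptual one; the rate-matching condition $\alpha_r = \alpha_s - \alpha_\sigma + \tfrac12$ is precisely what equates the two dominant exponents.

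First, the initial-condition term gives $L(x_0,z_0,t_0)/r(t) = \Ocal(t^{-\alpha_r})$. Since $\alpha_r = \alpha_s + \tfrac{1}{2}-\alpha_\sigma > \tfrac12 -\alpha_\sigma$, this decays strictly faster than $t^{\alpha_\sigma - 1/2}$ and is therefore absorbed. Second, the $\psi(x^\star)$ term contributes $\Ocal(t^{\alpha_s - \alpha_r})$, and substituting the rate-matching condition immediately yields $\alpha_s - \alpha_r = \alpha_\sigma - \tfrac12$, so this term is $\Ocal(t^{\alpha_\sigma - 1/2})$.

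Third, for the Itô-correction integral, the integrand is of order $\tau^{2(\alpha_r-1)+2\alpha_\sigma-\alpha_s} = \tau^{2\alpha_r+2\alpha_\sigma-\alpha_s-2}$. Plugging the rate-matching condition into the exponent gives $\alpha_s - 1$, which is strictly greater than $-1$ by $\alpha_s > 0$, hence the integral is integrable only at $0$ and grows like $\Ocal(t^{\alpha_s})$. Dividing by $r(t)=t^{\alpha_r}$ gives another $\Ocal(t^{\alpha_s-\alpha_r}) = \Ocal(t^{\alpha_\sigma-1/2})$ contribution. Adding the three estimates completes the proof.

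The only step requiring any care is verifying that the integrand stays in the integrable-at-infinity-or-polynomially-growing regime (not in a boundary logarithmic regime); the choice of $\alpha_r$ relative to $\alpha_s, \alpha_\sigma$ makes this automatic. There is no genuine obstacle: the corollary is essentially a direct bookkeeping consequence of Theorem~\ref{thm:rate_exp} once one balances the bias-like term $s(t)/r(t)$ against the variance-like term $\int \eta^2\sigma_*^2/s / r(t)$, which is exactly what the condition $\alpha_r+\alpha_\sigma-\alpha_s-\tfrac12=0$ encodes.
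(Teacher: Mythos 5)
Your proposal is correct and follows essentially the same route as the paper: the paper likewise specializes the bound of Theorem~\ref{thm:rate_exp} to the power-law choices, obtains the two exponents $\alpha_s - \alpha_r$ and $\alpha_r + 2\alpha_\sigma - \alpha_s - 1$, and balances them via $\alpha_r + \alpha_\sigma - \alpha_s - \frac{1}{2} = 0$. Your version just carries out the bookkeeping (including the integral estimate $\int_{t_0}^t \tau^{\alpha_s - 1}d\tau = \Ocal(t^{\alpha_s})$ and the check that the initial-condition term is subdominant) slightly more explicitly than the paper does.
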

In particular, Corollary~\ref{cor:sto_rate} indicates that it is possible to obtain convergence rates faster than $\Ocal(\frac{1}{t})$ if the gradient noise decays fast enough.

\begin{remark}
This result is reminiscent of the results of~\cite{schmidt2011inexact}, who studied the convergence rates of inexact accelerated methods (in discrete time), in which the gradient is evaluated at step $k$ up to an error term $e_k$. They show that the optimality gap (function values in expectation) at step $k$ is $\Ocal(\frac{\sum_{\kappa = 1}^k \kappa e_\kappa}{k^2})$. Although the settings are different (their analysis is done for deterministic, discrete methods), the results are similar. In particular, when $e_k$ decays as $\Ocal(k^{\alpha_\sigma})$, the resulting convergence rate of Nesterov's accelerated method is $\Ocal(\frac{\sum_{\kappa = 1}^k\kappa \kappa^{\alpha_\sigma}}{k^2}) = \Ocal(k^{\alpha_\sigma})$, which appears slower than the $\Ocal(t^{\alpha_\sigma} - \frac{1}{2})$ of Corollary~\ref{cor:sto_rate}. This is due to the fact that Nesterov's method corresponds to $r(t) = \Theta(t^2), s(t) = \Theta(1)$ (see Appendix~\ref{sec:app:nesterov_ode}), which is not the optimal choice of decay rate according to the corollary.
\end{remark}

\begin{remark}[Dependence on the dimension]
Note that we can potentially scale the rates $\eta(t), r(t)$ in order to improve the dependence of the bound~\eqref{eq:lyap_bound_sto_int} on the dimension $n$. Note that the second term in the bound of Theorem~\ref{thm:rate_exp} involves the quantity $\psi(x^\star)$, which can be bounded by $\sup_{x \in \Xcal} \psi(x)$ when $\Xcal$ is compact, and this supremum is often a non-decreasing function of $n$ (for example, in the case of the negative entropy on the simplex, we have that $\sup_{x \in \Xcal} \psi(x) = \sup_{x \in \Xcal} \sum_{i = 1}^n x_i \ln x_i = \log n$). Thus let us denote the supremum by $M(n)$, and assume that $\eta, r, a$ are given and satisfy the conditions of Theorem~\ref{thm:rate_exp}. Define the rescaled weights $\bar \eta(t) = \eta_0(n) \eta(t)$, $\bar r(t) = \eta_0(n) r(t)$ (note that we scale both $\eta$ and $r$ so that the condition $\eta \geq \dot r$ still holds, and we do not need to scale $a$ since $a = \eta/r = \bar\eta / \bar r$). Then applying Theorem~\ref{thm:rate_exp} to $\SAMD_{\bar \eta, \bar \eta / \bar r, s}$ gives us
\al{
\Ebb &[f(X(t))] - f(x^\star)
\leq \frac{L(x_0, z_0, t_0) + M(n) s(t) + \frac{n L_{\psi^*} \eta_0^2(n)}{2} \int_{t_0}^t \frac{\eta^2(\tau)\sigma_*^2(\tau)}{s(\tau)} d\tau}{\eta_0(n)r(t)}
}
where $L(x_0, z_0, t_0) = \eta_0(n)r(t_0)(f(x_0) - f(x^\star)) + s(t_0) D_{\psi^*}(z_0, z^\star)$ and is dominated by the other terms (assuming that $D_{\psi^*}(z_0, z^\star) = \Ocal(M(n))$ to simplify). Thus we have, asymptotically as $n \to \infty$,
\al{
\Ebb &[f(X(t))] - f(x^\star)
= \Ocal\parenth{ \frac{M(n)}{\eta_0(n)} + n \eta_0(n) }
}
and choosing $\eta_0(n) = \sqrt{\frac{M(n)}{n}}$ minimizes the asymptotic growth rate of the bound, resulting in
\al{
\Ebb [f(X(t))] - f(x^\star)
= \Ocal( \sqrt{nM(n)} ) && \text{as $n \to \infty$}.
}
\end{remark}

We now illustrate the asymptotic rates of Theorem~\ref{thm:rate_exp}, and the optimal choice of primal and dual rates given in Corollary~\ref{cor:sto_rate} on the simplex-constrained sum-exponential example of Section~\ref{sec:example}. We simulate solution trajectories of $\SAMD_{\eta, \eta/r, s}$ under different noise regimes, by taking $\sigma^*(t) = 10^{-1} t^{\alpha_\sigma}$ for different values of $\alpha_\sigma$, and for different configurations of rates $r(t) = t^{\alpha_r}$, $s(t) = t^{\alpha_s}$. To simplify, we took $\eta(t) = \dot r(t)$. We plot the mean and standard deviation over $100$ simulations. In particular, we seek to verify whether the optimal decay rates are given by $\alpha_r = \alpha_s - \alpha_\sigma + \frac{1}{2}$, as predicted by Corollary~\ref{cor:sto_rate}. The results are given in Figure~\ref{fig:samd_rates}.

For each choice of $\alpha_r$, we evaluate the decay rate of expected function values (using the empirical mean over $100$ runs). The optimal decay rate $\alpha_r$, and the resulting decay of function values, seem consistent with the predictions of the Corollary when $\sigma_*(t)$ is non-decreasing (i.e. $\alpha_\sigma \geq 0$). However, for decreasing $\sigma_*(t)$ (i.e. in the vanishing noise regime), the estimates of the Corollary seem to be conservative. The optimal rate $\alpha_r$ which we observed was slower than predicted, and the resulting convergence rate was faster, see Figure~\ref{fig:samd_rates} for examples. It is also interesting to observe the effect of varying the averaging rates on the trajectory: a faster decay of the sensitivity $1/s(t)$, or similarly a faster decay of the dual weight $\eta(t)$, results in a decrease of the period of oscillations of the accelerated dynamics.

\def\wdth{.51\textwidth}
\begin{figure}[h!]
\centering
\includegraphics[width=\wdth]{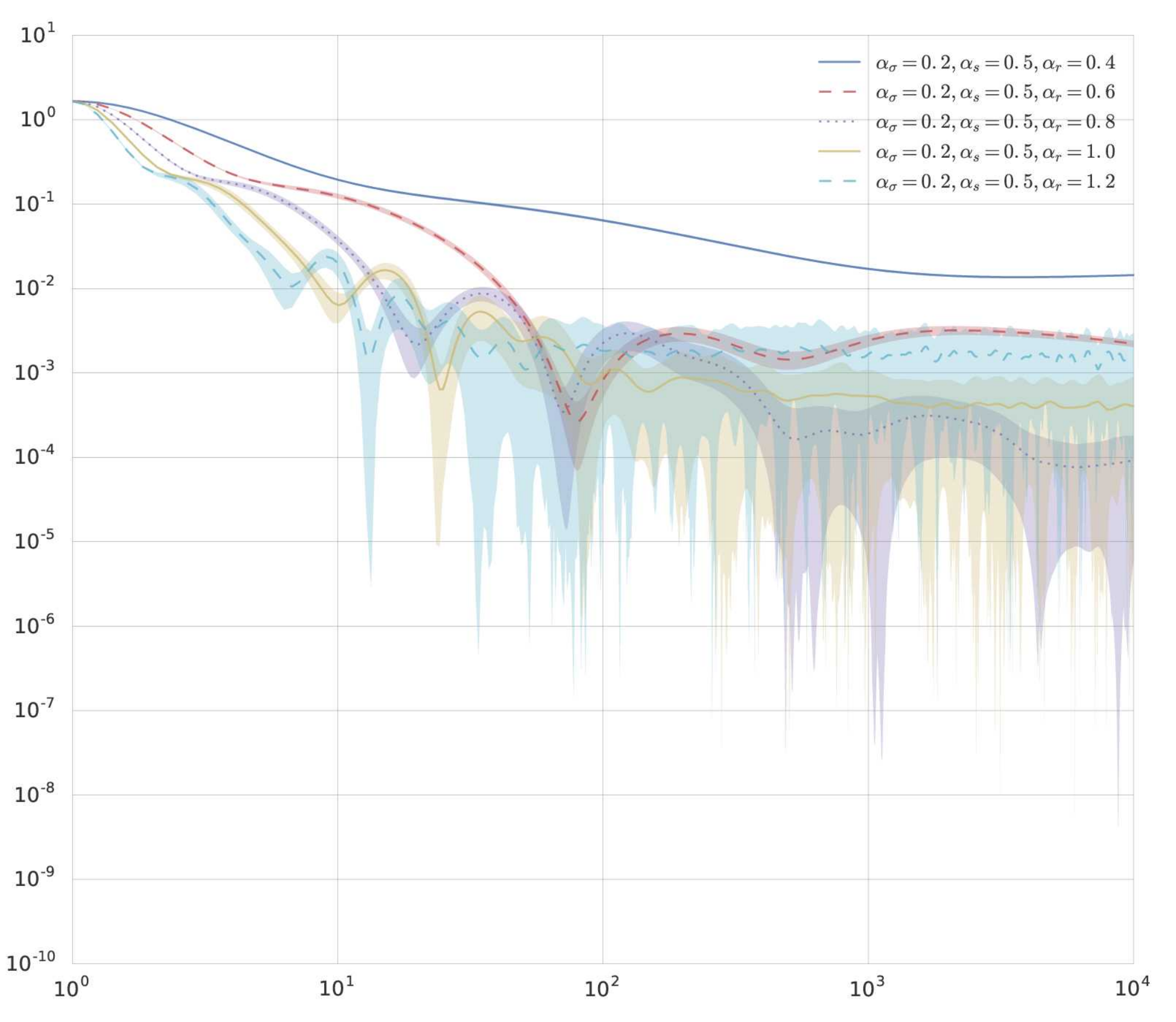}%
\includegraphics[width=\wdth]{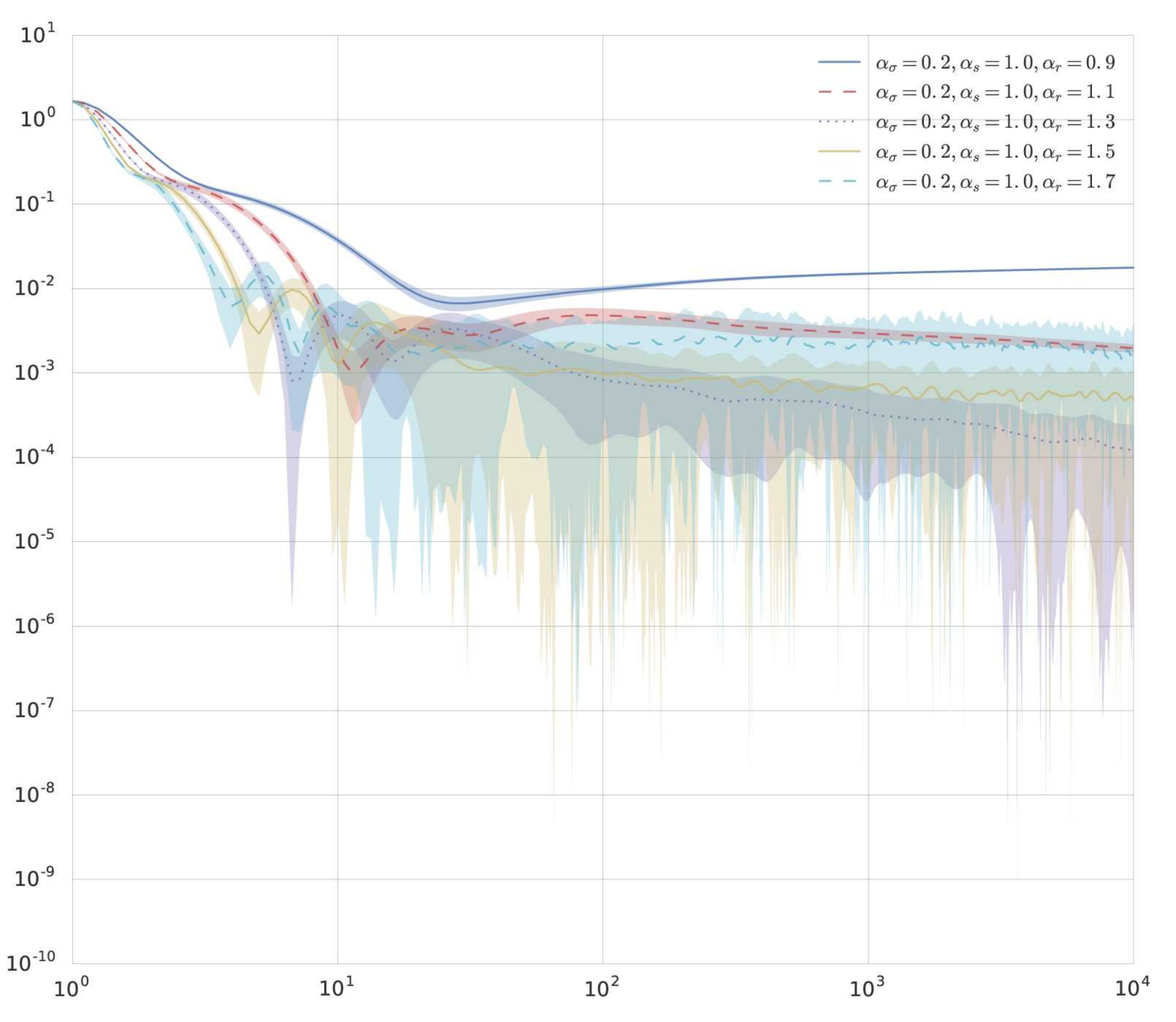}%

\includegraphics[width=\wdth]{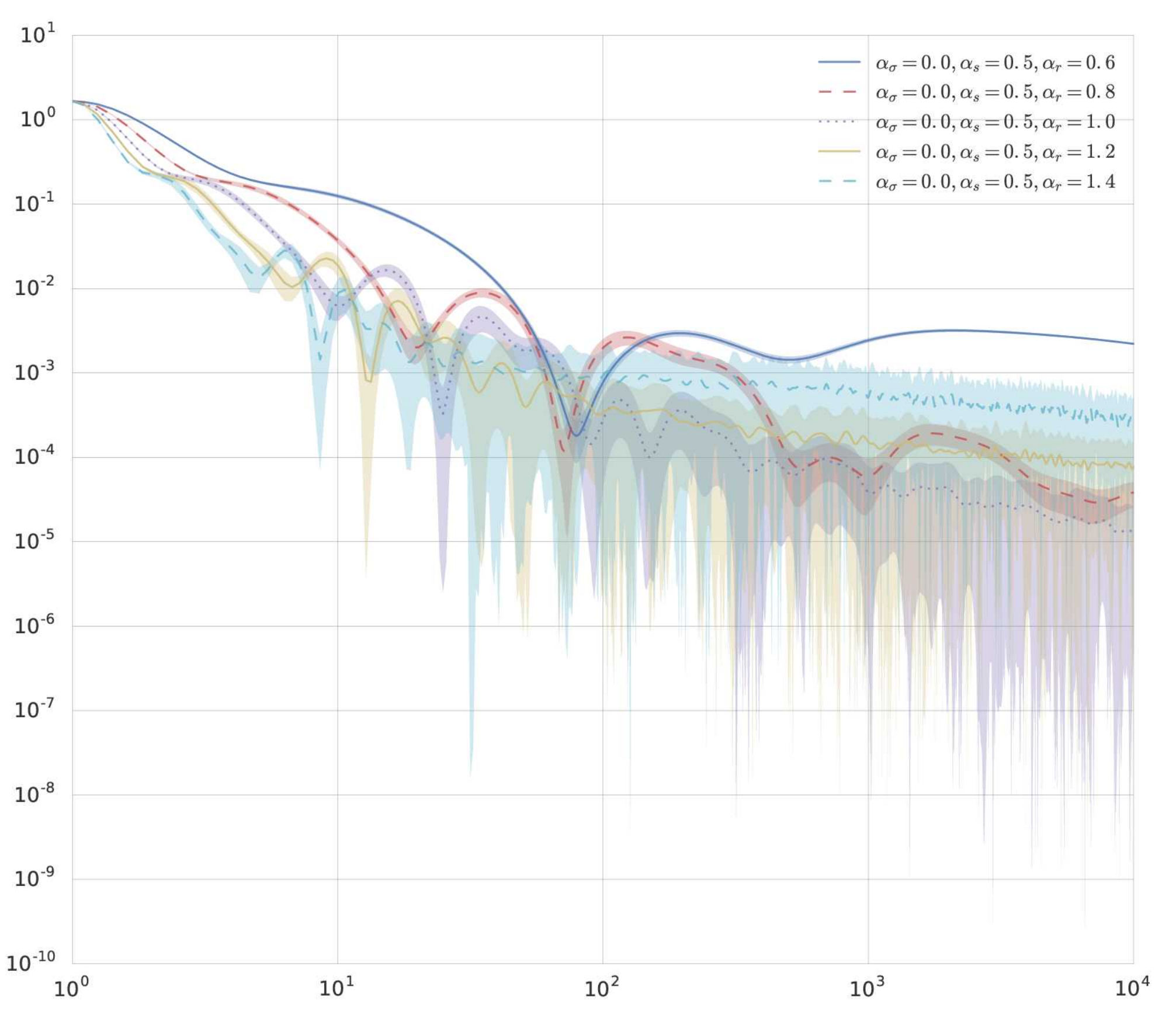}%
\includegraphics[width=\wdth]{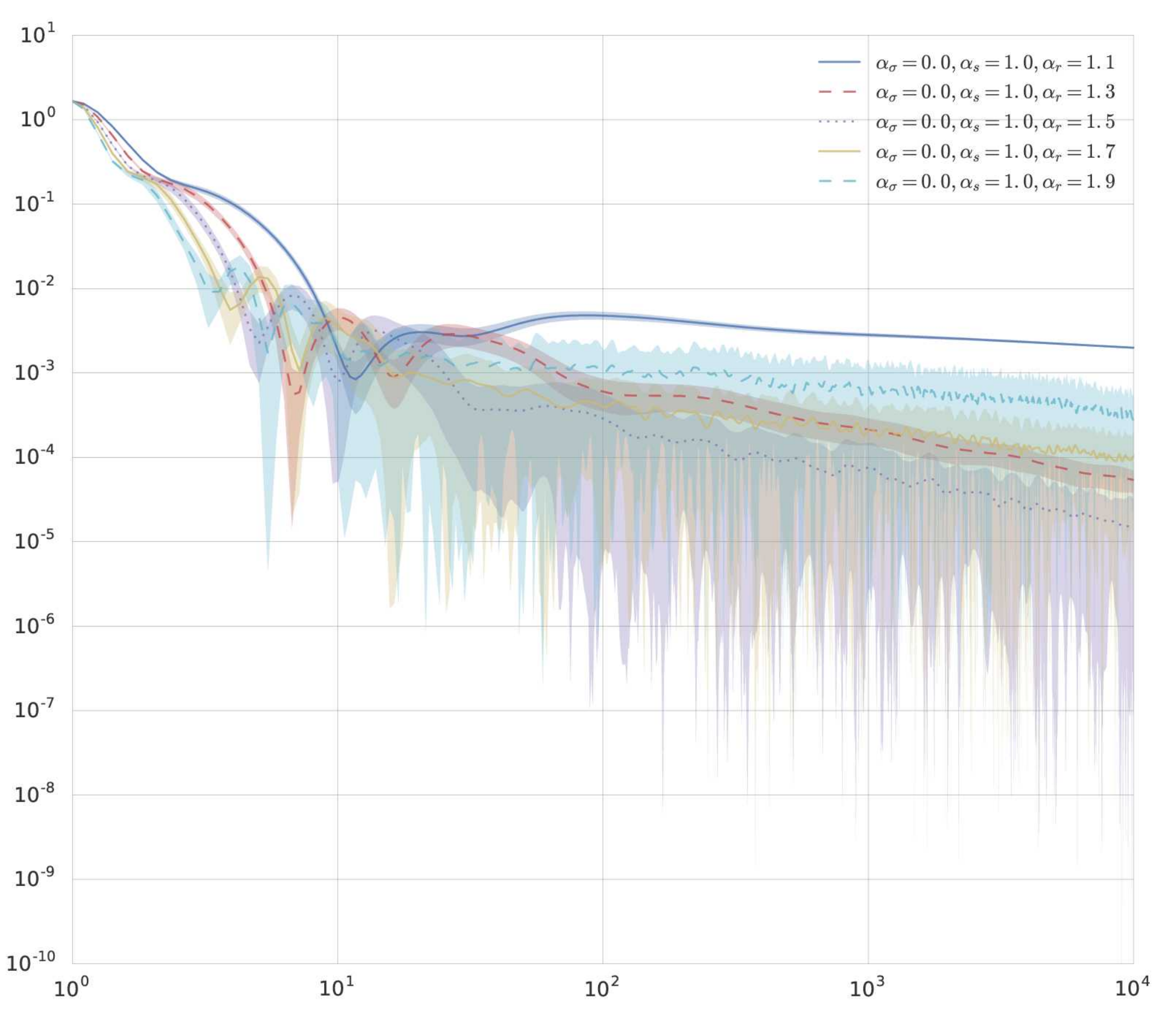}%

\includegraphics[width=\wdth]{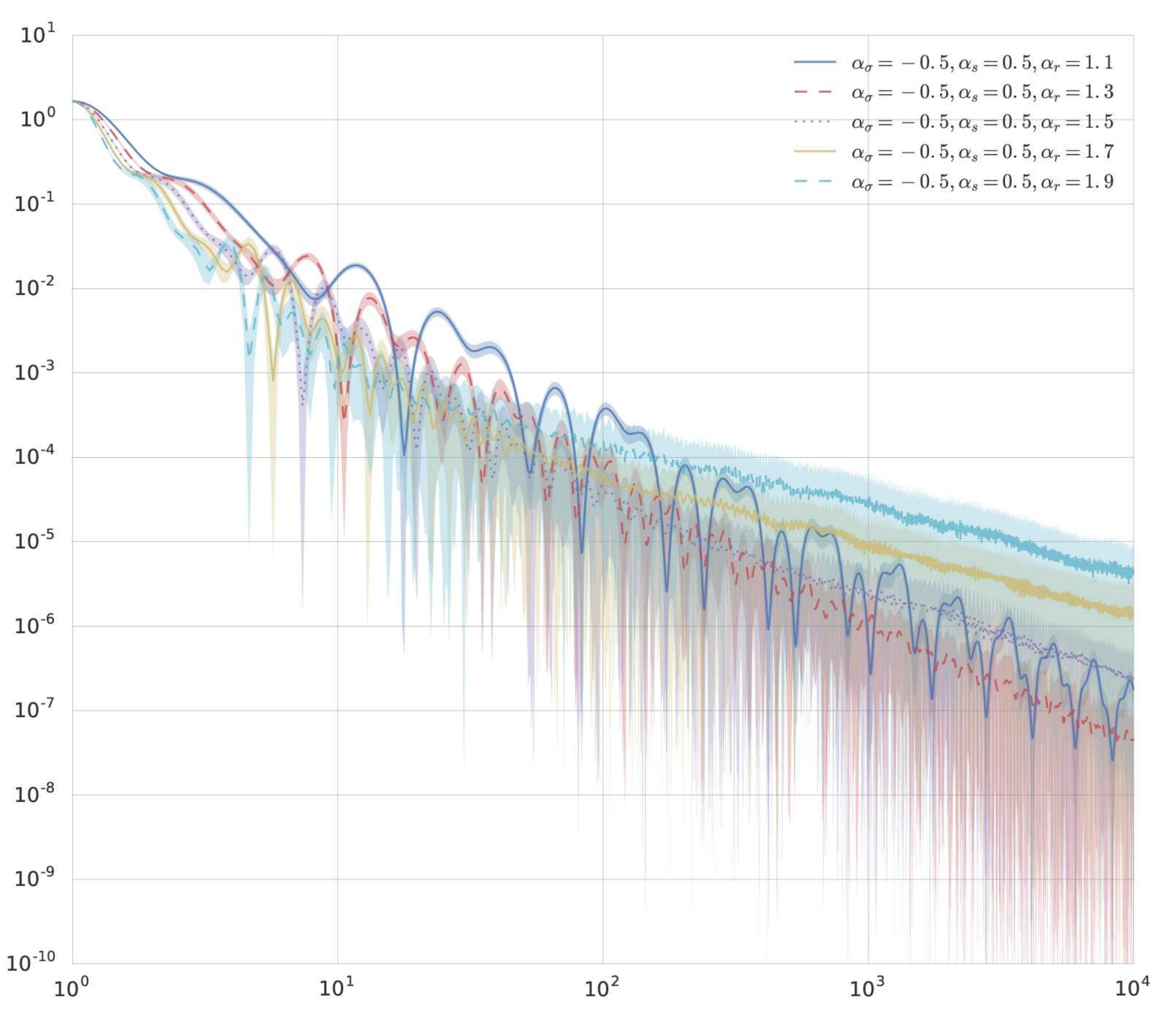}%
\includegraphics[width=\wdth]{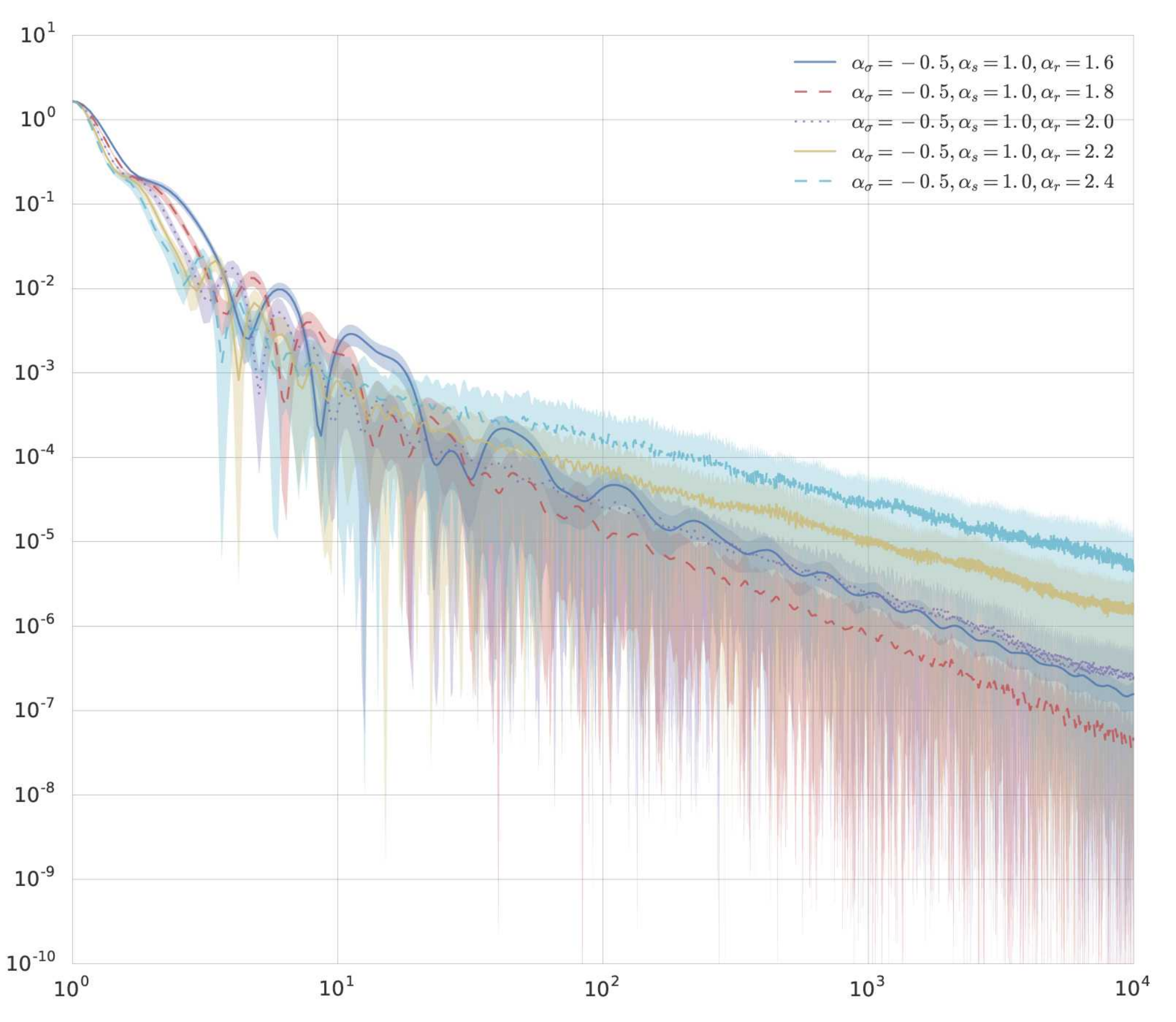}%

\end{figure}
\clearpage
\begin{figure}[h!]\ContinuedFloat
\includegraphics[width=\wdth]{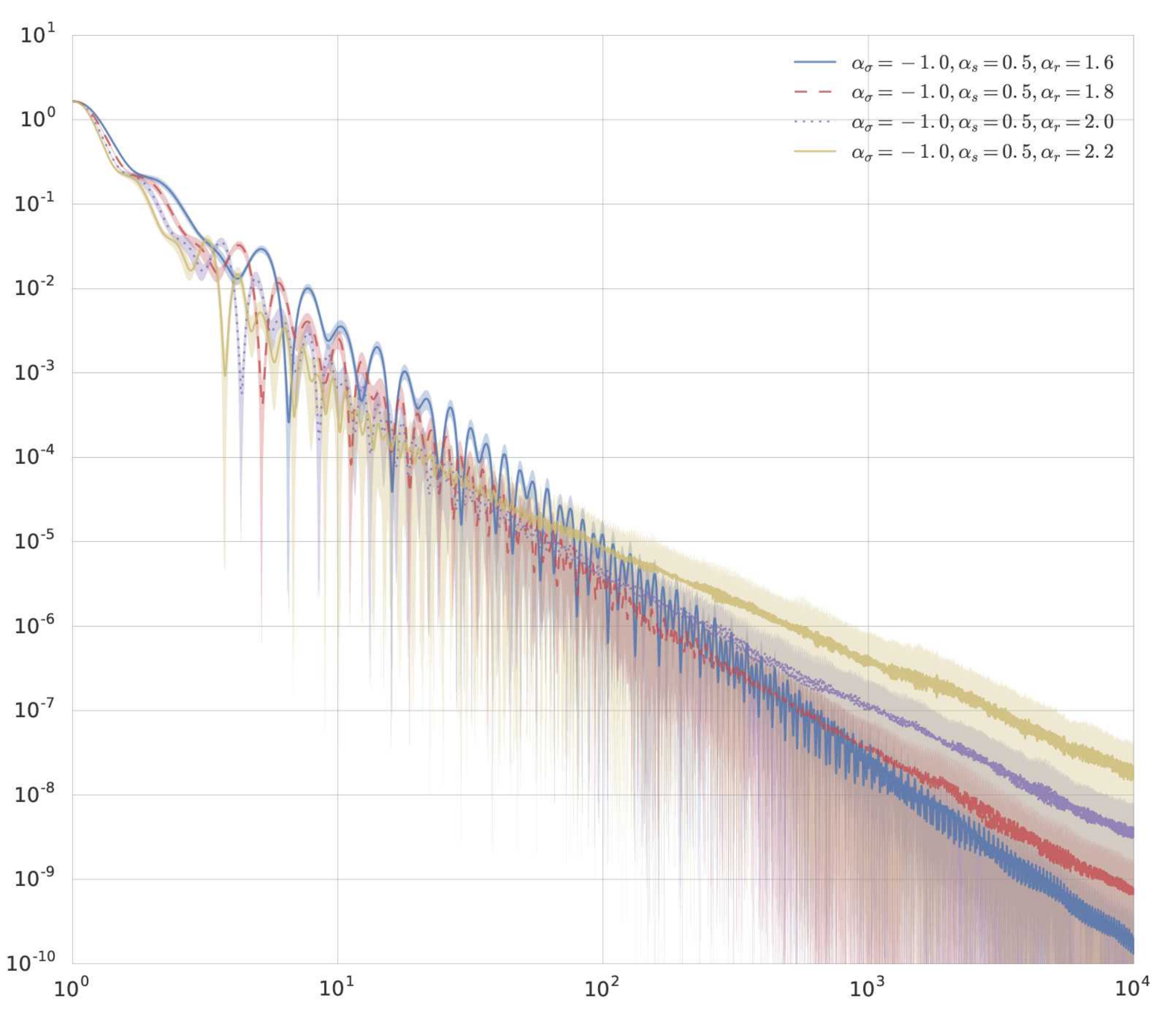}%
\includegraphics[width=\wdth]{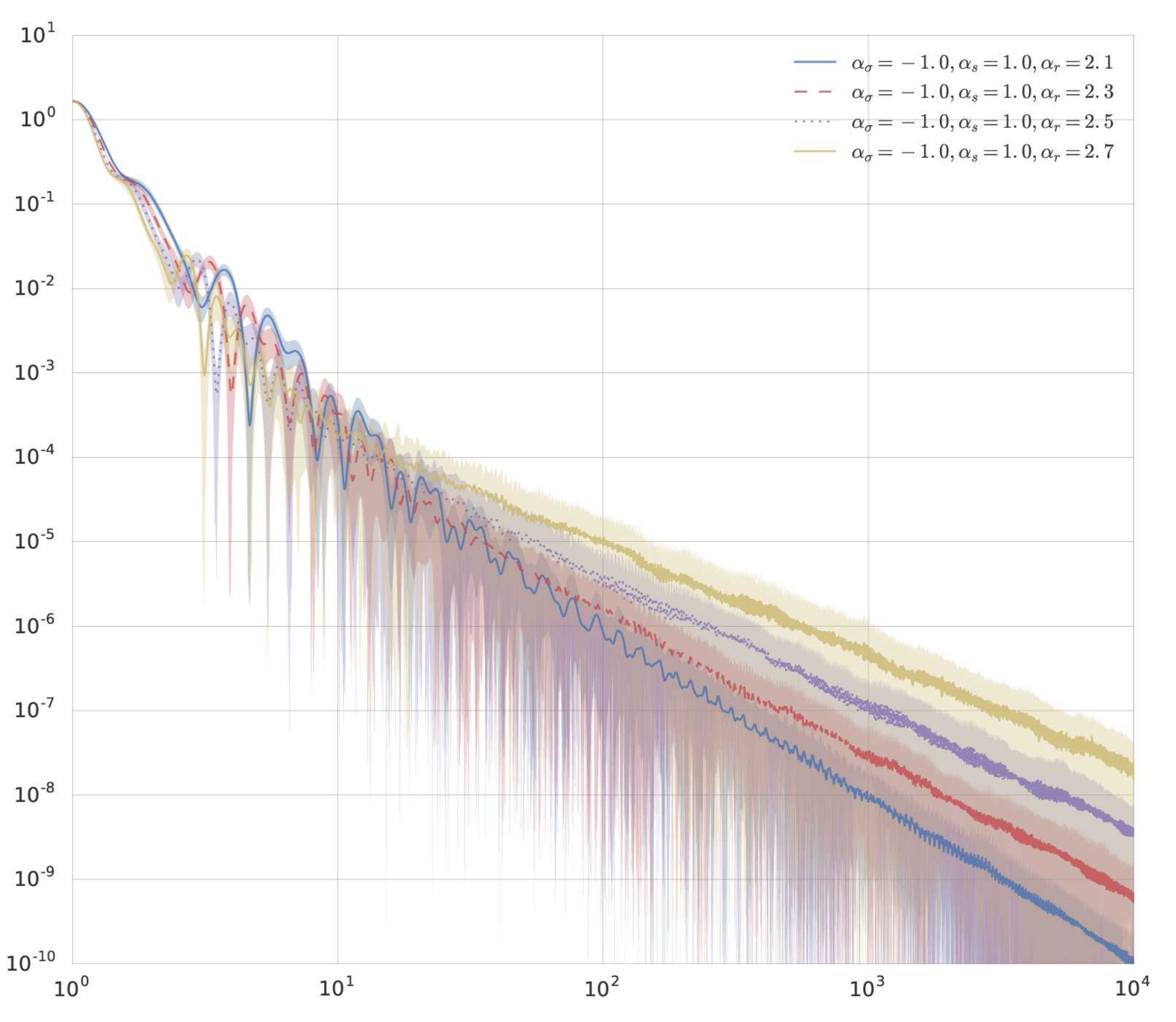}%
\caption{Mean and standard deviation of function values along solution trajectories of $\SAMD_{\dot r, \dot r/r, s}$, for different configurations of rates $r(t) = t^{\alpha_r}, s(t) = t^{\alpha_s}$, and different noise regimes $\sigma_*(t) = 10^{-1} t^{\alpha_\sigma}$. Each row correspond to a different noise regime ($\alpha_\sigma = .2$ for the first row, $0$ for the second, $-.5$ for the third, and $-1$ for the last), and each column corresponds to a different decay rate of the sensitivity ($\alpha_s = .5$ for the first column, $1$ for the second). Each figure contains multiple plots corresponding to different $\alpha_r$. For the first two rows, $\alpha_r = \alpha_s - \alpha_\sigma + .5$ gives the optimal rate, as predicted by Corollary~\ref{cor:sto_rate}. However, for the last two rows, the best decay rate is achieved by $\alpha_r$ is lower, namely $\alpha_r = \alpha_s - \alpha_\sigma + .3$ for the third row, and $\alpha_r = \alpha_s - \alpha_\sigma + .1$ for the last.}
\label{fig:samd_rates}
\end{figure}

\subsection{Almost sure asymptotic rates}
Finally, we give an estimate of the asymptotic convergence rate along solution trajectories, under the additional compactness assumption (to be able to bound the martingale term, using Lemma~\ref{lem:martingale_bound}).
\begin{theorem}
\label{thm:rate_as}
Suppose that Assumptions~\ref{assumption:lip}, \ref{assumption:compact} and \ref{assumption:volatility} hold, and suppose that $a = \eta / r$ and $\eta \geq \dot r$. Let $(X(t), Z(t))$ be the unique continuous solution to $\SAMD_{\eta, \eta/r, s}$. Then
\al{
f(X(t)) - f(x^\star) = \Ocal\parenth{\frac{s(t) + n\int_{t_0}^t \frac{\eta^2(\tau)\sigma_*^2(\tau)}{s(\tau)} + \sqrt{b(t) \log \log b(t)}}{r(t)}}  &&\text{a.s. as $t \to \infty$,}
}
where $b(t) = \int_{t_0}^t \eta^2(\tau)\sigma_*^2(\tau)d\tau$.
\end{theorem}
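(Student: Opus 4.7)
The plan is to mirror the proof of Theorem~\ref{thm:rate_exp}, but rather than taking expectations to eliminate the It\^o martingale term, I would control it pathwise by invoking Lemma~\ref{lem:martingale_bound}. The key ingredients are already assembled: Lemma~\ref{lem:lyap_bound_sto} gives an SDE-level bound on $dL$, Corollary~\ref{cor:deterministic_rate}'s observation $r(t)(f(X(t)) - f(x^\star)) \leq L(X(t), Z(t), t)$ converts a bound on $L$ into a bound on the optimality gap, and Lemma~\ref{lem:martingale_bound} gives the required almost sure envelope on the martingale. Compactness of $\Xcal$ (Assumption~\ref{assumption:compact}) is what makes the latter applicable.

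Concretely, I would first integrate the inequality in Lemma~\ref{lem:lyap_bound_sto} from $t_0$ to $t$. The hypothesis $\eta \geq \dot r$ renders the first drift term $(f(X) - f(x^\star))(\dot r - \eta)$ non-positive and hence droppable, yielding
\begin{equation*}
L(X(t), Z(t), t) - L(x_0, z_0, t_0) \leq \psi(x^\star)(s(t) - s(t_0)) + \frac{nL_{\psi^*}}{2} \int_{t_0}^t \frac{\eta^2(\tau)\sigma_*^2(\tau)}{s(\tau)} d\tau + \int_{t_0}^t \braket{V(\tau)}{dB(\tau)}.
\end{equation*}
Next, I would apply Lemma~\ref{lem:martingale_bound} to conclude that the martingale term is $\Ocal(\sqrt{b(t) \log\log b(t)})$ almost surely as $t \to \infty$ (this is where Assumption~\ref{assumption:compact} enters, since it bounds the diameter of $\Xcal$ and hence $\|V(t)\|$).

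Finally, since $s(t) D_{\psi^*}(Z(t)/s(t), z^\star) \geq 0$, the definition of $L$ gives $f(X(t)) - f(x^\star) \leq L(X(t), Z(t), t)/r(t)$. Dividing the displayed bound by $r(t)$, absorbing the constants $\psi(x^\star)$, $L_{\psi^*}/2$, and the initial term $L(x_0, z_0, t_0) - \psi(x^\star)s(t_0)$ into the $\Ocal(\cdot)$, and combining with the martingale envelope, yields the stated rate.

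I do not expect any serious obstacle: every estimate needed is a direct application of a previously established lemma, and the only conceptual shift from Theorem~\ref{thm:rate_exp} is replacing the vanishing-in-expectation argument for the It\^o term by the law-of-iterated-logarithm bound from Lemma~\ref{lem:martingale_bound}. The one subtlety worth flagging is that the almost-sure big-$\Ocal$ holds only on the event where the quadratic variation $\beta(t)$ diverges; but on the complementary event the martingale is bounded a.s., which is absorbed into the same $\Ocal(\sqrt{b(t)\log\log b(t)})$ statement, so the final bound is valid on a set of full measure.
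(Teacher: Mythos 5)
Your proposal is correct and follows essentially the same route as the paper's proof: integrate the bound of Lemma~\ref{lem:lyap_bound_sto} to obtain inequality~\eqref{eq:lyap_bound_sto_int}, invoke Lemma~\ref{lem:martingale_bound} (valid under Assumption~\ref{assumption:compact}) to control the It\^o martingale term almost surely, and divide by $r(t)$ using the non-negativity of the Bregman term. The subtlety you flag about the bounded-quadratic-variation event is already absorbed into the statement of Lemma~\ref{lem:martingale_bound}, so nothing further is needed.
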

\begin{proof}
Integrating the bound of Lemma~\ref{lem:lyap_bound_sto} once again, we get inequality~\eqref{eq:lyap_bound_sto_int}, where we can bound the It\^o martingale term $\int_{t_0}^t \braket{V(\tau)}{dB(\tau)}$ using Lemma~\ref{lem:martingale_bound}, since $\Xcal$ is now assumed compact. This concludes the proof.
\end{proof}
Comparing the last bound to that of Theorem~\ref{thm:rate_exp}, we have the additional $\frac{\sqrt{b(t) \log\log b(t)}}{r(t)}$ term due to the envelope of the martingale term. This results in a slower a.s. convergence rate. To give a concrete example, suppose again that $\sigma_*(t) = \Ocal(t^\alpha_\sigma)$, and that $r(t) = t^\beta$ and $\eta(t) = \dot r(t) = \beta t^{\beta - 1}$ to simplify. Then $b(t) = \int_{t_0}^t \eta^2(\tau)\sigma_*^2(\tau)d\tau = \Ocal(t^{2\beta+2\alpha_\sigma - 1})$, and the martingale term becomes $\Ocal(\sqrt{b(t) \log \log b(t)} / r(t)) = \Ocal(t^{\alpha_\sigma - \frac{1}{2}} \sqrt{\log \log t})$. Remarkably, the convergence rate of sample trajectories is, up to a $\sqrt{\log \log t}$ factor, the same as the convergence rate in expectation.
\begin{corollary}
Suppose that Assumptions~\ref{assumption:lip}, \ref{assumption:compact} and \ref{assumption:volatility} hold, and suppose that $\sigma_*(t) = \Ocal(t^{\alpha_\sigma})$, $\alpha_\sigma < \frac{1}{2}$. Consider $\SAMD_{\eta, \eta/r, s}$ dynamics with $r(t) = t^{\alpha_r}$, $\eta(t) = \dot r(t) = \alpha_r t^{\alpha_r - 1}$, and $s(t) = t^{\alpha_s}$, and suppose that
\[
\alpha_r = \alpha_s - \alpha_\sigma + \frac{1}{2}.
\]
Then
\al{
f(X(t)) - f(x^\star) = \Ocal(t^{\alpha - \frac{1}{2}} \sqrt{\log \log t}) &&\text{a.s. as $t \to \infty$.}
}
\end{corollary}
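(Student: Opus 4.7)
The plan is to invoke Theorem~\ref{thm:rate_as} with the specified rate functions and then estimate each of the three terms in the numerator asymptotically. Since we have $\eta = \dot r$, the hypothesis $\eta \geq \dot r$ is satisfied, and $a = \eta/r$ by construction, so Theorem~\ref{thm:rate_as} applies directly. It then suffices to show that each of $s(t)/r(t)$, $\int_{t_0}^t \eta^2(\tau)\sigma_*^2(\tau)/s(\tau)\,d\tau/r(t)$, and $\sqrt{b(t)\log\log b(t)}/r(t)$ is $\Ocal(t^{\alpha_\sigma - \frac{1}{2}}\sqrt{\log\log t})$, with the last one providing the extra iterated-logarithm factor.

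First I would compute $s(t)/r(t) = t^{\alpha_s - \alpha_r}$, which under the hypothesis $\alpha_r = \alpha_s - \alpha_\sigma + \frac{1}{2}$ equals $t^{\alpha_\sigma - \frac{1}{2}}$. Next, using $\eta^2(\tau) = \Theta(\tau^{2\alpha_r - 2})$ and $\sigma_*^2(\tau) = \Ocal(\tau^{2\alpha_\sigma})$, the integrand in the second term is $\Ocal(\tau^{2\alpha_r + 2\alpha_\sigma - \alpha_s - 2})$. Substituting $\alpha_r = \alpha_s - \alpha_\sigma + \frac{1}{2}$, the exponent is $\alpha_s - 1$, which is strictly greater than $-1$ (since $\alpha_s > 0$ is implicit in making the system parameters positive), so the integral is $\Ocal(t^{\alpha_s})$. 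Dividing by $r(t) = t^{\alpha_r}$ yields $\Ocal(t^{\alpha_s - \alpha_r}) = \Ocal(t^{\alpha_\sigma - \frac{1}{2}})$.

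For the third term, I would evaluate $b(t) = \int_{t_0}^t \eta^2(\tau)\sigma_*^2(\tau)\,d\tau$ using the exponent computation $2\alpha_r + 2\alpha_\sigma - 2 = 2\alpha_s - 1 > -1$, so $b(t) = \Ocal(t^{2\alpha_s})$. Consequently $\sqrt{b(t)\log\log b(t)} = \Ocal(t^{\alpha_s}\sqrt{\log\log t})$, and after dividing by $r(t)$ we obtain $\Ocal(t^{\alpha_s - \alpha_r}\sqrt{\log\log t}) = \Ocal(t^{\alpha_\sigma - \frac{1}{2}}\sqrt{\log\log t})$. Adding the three bounds, the martingale envelope dominates by a $\sqrt{\log\log t}$ factor, giving the claimed rate.

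There is no real obstacle here; the exercise is purely a bookkeeping of exponents. The only point requiring care is ensuring that the exponent $2\alpha_r + 2\alpha_\sigma - 2$ exceeds $-1$ so that $b(t)$ indeed grows polynomially (as opposed to being convergent, in which case the martingale envelope would be even smaller and the conclusion would still hold trivially), and verifying that the choice $\alpha_r = \alpha_s - \alpha_\sigma + \frac{1}{2}$ exactly equates the exponents of the deterministic decay term $s(t)/r(t)$ and the It\^o correction integral term, which is the very balancing identity derived in the remark preceding Corollary~\ref{cor:sto_rate}.
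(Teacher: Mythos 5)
Your proposal is correct and follows essentially the same route as the paper: invoke Theorem~\ref{thm:rate_as}, note that the first two numerator terms balance to $\Ocal(t^{\alpha_\sigma-\frac{1}{2}})$ exactly as in the discussion preceding Corollary~\ref{cor:sto_rate}, and check that $b(t)=\Ocal(t^{2\alpha_s})$ so the martingale envelope contributes the extra $\sqrt{\log\log t}$ factor (the ``$\alpha$'' in the stated rate is a typo for $\alpha_\sigma$, as you implicitly assumed). Your remark that $\alpha_s>0$ is needed for the integrals to grow polynomially rather than logarithmically is a point the paper leaves implicit, and is worth making explicit.
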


\section{Numerical examples}
\label{sec:numerics}
In this section, we give additional numerical examples to illustrate the differences between $\SMD$ and $\SAMD$ dynamics. In order to compare the performance of the two methods, we derive, in Appendix~\ref{sec:app:smd_rates}, the convergence rates of (non-accelerated) stochastic mirror descent, in particular when $\sigma_*(t)$ is asymptotically vanishing. Corollary~\ref{cor:smd_rate} shows that when $\sigma_*(t) = \Ocal(t^{\alpha_\sigma})$, $\SMD$ with inverse sensitivity rate $s(t) = t^{\max(0, \alpha_\sigma + \frac{1}{2})}$ guarantees convergence of expected function values at the rate $\Ocal(t^{\max(\alpha_\sigma - \frac{1}{2}, 1)})$. So theoretically, when $\alpha_\sigma \in [-\frac{1}{2}, \frac{1}{2})$, both methods are able to achieve the same asymptotic rates. However in practice, the accelerated dynamics exhibit desirable properties that we attempt to illustrate.

We consider again the simplex-constrained example of Section~\ref{sec:example}, so that we can visualize the trajectories and provide intuition. The objective function is given by the sum-exp function
\[
f_1(x) = \sum_{i = 1}^k e^{\braket{c_i}{x}},
\]
and in order to illustrate the non-strongly convex case, we also consider a simple convex quadratic of rank $1$, given by
\[
f_2(x) = \frac{1}{2}\braket{x}{c}^2 = \frac{1}{2} x^T cc^Tx.
\]

The results are given in Figure~\ref{fig:comparison_sum_exp} and~\ref{fig:comparison_quad_over_lin}, respectively. Qualitatively, $\SAMD$ appears to exhibit smoother trajectories and a lower variance of the function values. Due to the oscillations of the accelerated dynamics, $\SAMD$ initially makes slower progress (observe how $\SMD$ trajectories make very fast progress in the initial steps), but makes faster progress in later iterations. A similar behavior is typical in the deterministic counterpart of the dynamics: accelerated mirror descent typically exhibits slower progress initially compared to non-accelerated mirror descent, but eventually makes faster progress.

Another difference which we observed in our numerical experiments is that the $\SAMD$ dynamics seems more robust to much larger discretization step sizes (where we used a simple, constant-step discretization). Thus, even when $\SMD$ theoretically achieves the same asymptotic rates of convergences as $\SAMD$ (this is the case when $\sigma_*(t) = \Theta(t^{\alpha_\sigma})$ with $\alpha_\sigma \in [-\frac{1}{2}, \frac{1}{2})$, according to Corollary~\ref{cor:sto_rate} and Corollary~\ref{cor:smd_rate}), $\SAMD$ can, in practice, reach a target accuracy in fewer iterations.

In order to further illustrate the effect of the magnitude of the noise, we generate, in Figure~\ref{fig:noise}, different trajectories corresponding to $\SMD$ and $\SAMD$ dynamics, with a constant noise covariance given by $\sigma_*(t) \equiv \sigma_*$. It is interesting to observe that for $\SAMD$, the magnitude of the noise seems to affect the amplitude of the oscillations, but not their period (the amplitude becomes smaller as the magnitude of the noise increases).

\def\wdth{\textwidth}
\begin{figure}[h!]
\centering
\includegraphics[width=\wdth]{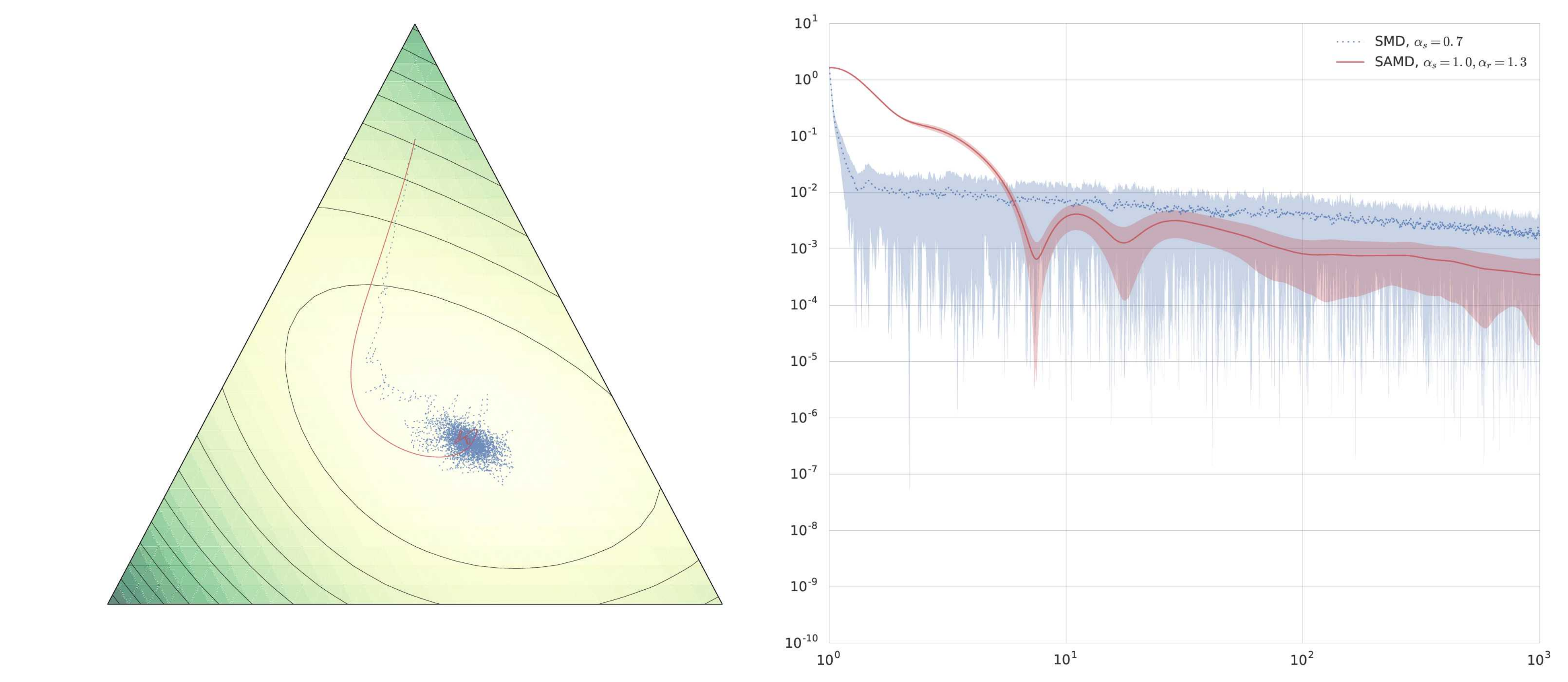}
\includegraphics[width=\wdth]{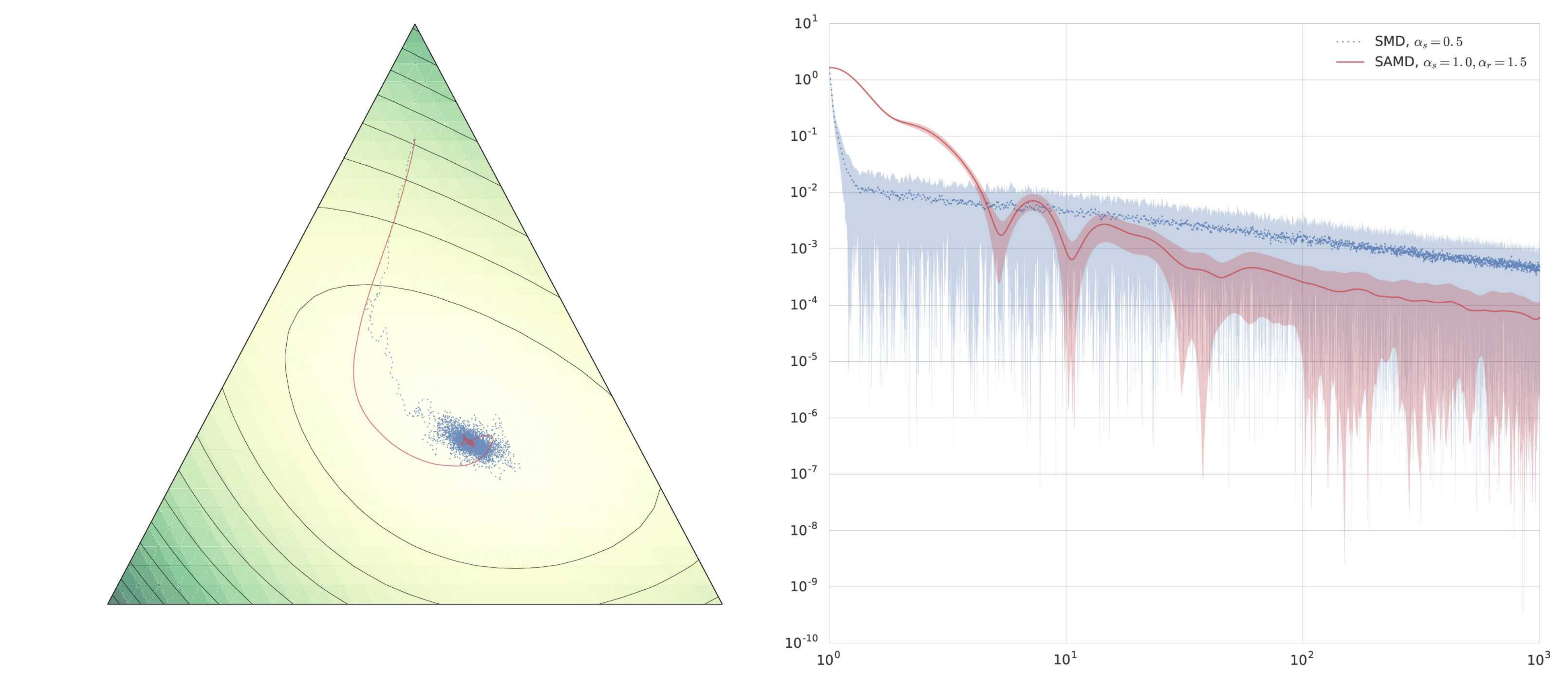}
\includegraphics[width=\wdth]{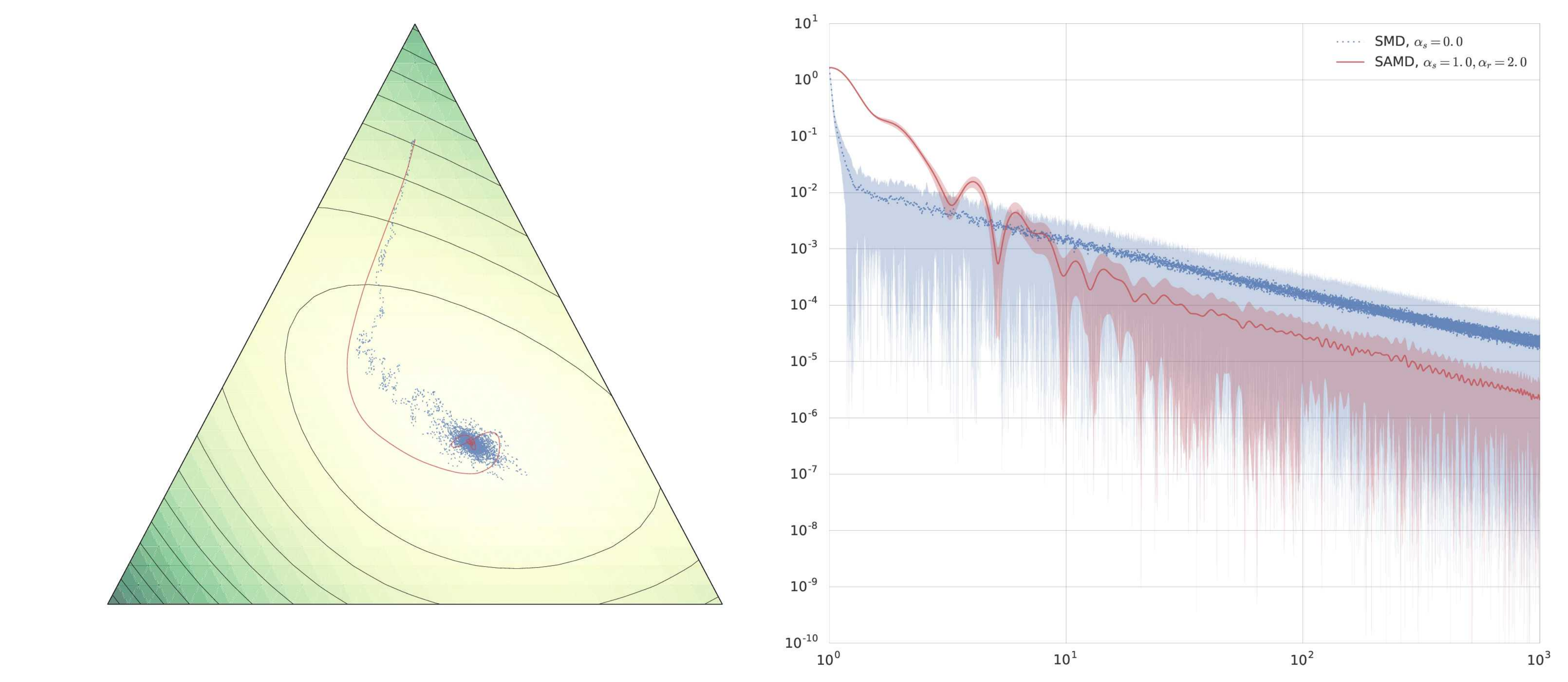}

\caption{Function values $f_1(X(t)) - f_1(x^\star)$, along solution trajectories $X(t)$ of $\SMD$ and $\SAMD$, for different noise regimes $\sigma_*(t) = 10^{-1} t^{\alpha_\sigma}$: $\alpha_\sigma = .2$ for the top figure, $\alpha_\sigma = 0$ for the middle figure, and $\alpha_\sigma = -\frac{1}{2}$ for the bottom figure. The dynamics are configured according to the optimal rates of Corollary~\ref{cor:sto_rate} and Corollary~\ref{cor:smd_rate}, i.e. $\alpha_s = \max(0, \alpha_\sigma + \frac{1}{2})$ for $\SMD$, and $\alpha_r = \alpha_s - \alpha_\sigma + \frac{1}{2}$ for $\SAMD$.}
\label{fig:comparison_sum_exp}
\end{figure}
\clearpage
\begin{figure}[h!]
\centering
\includegraphics[width=\wdth]{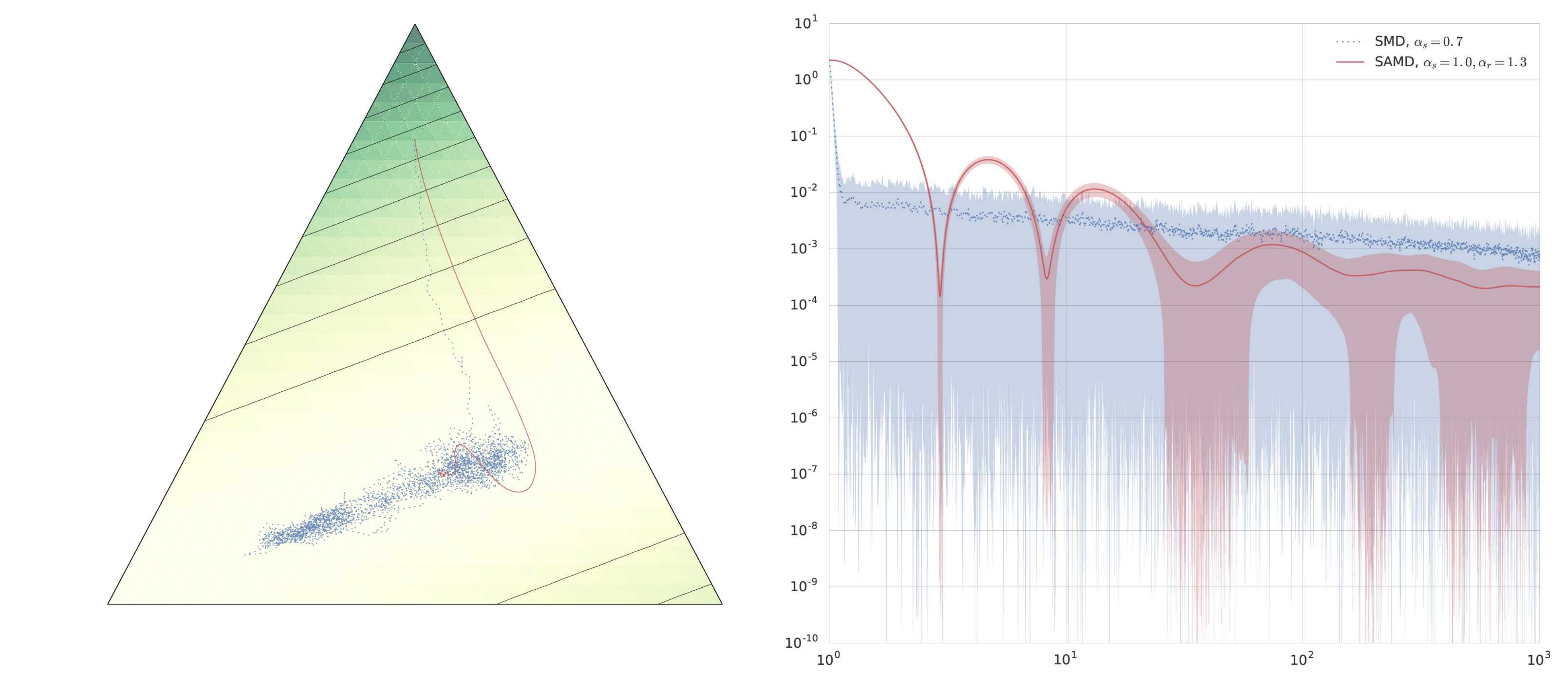}
\includegraphics[width=\wdth]{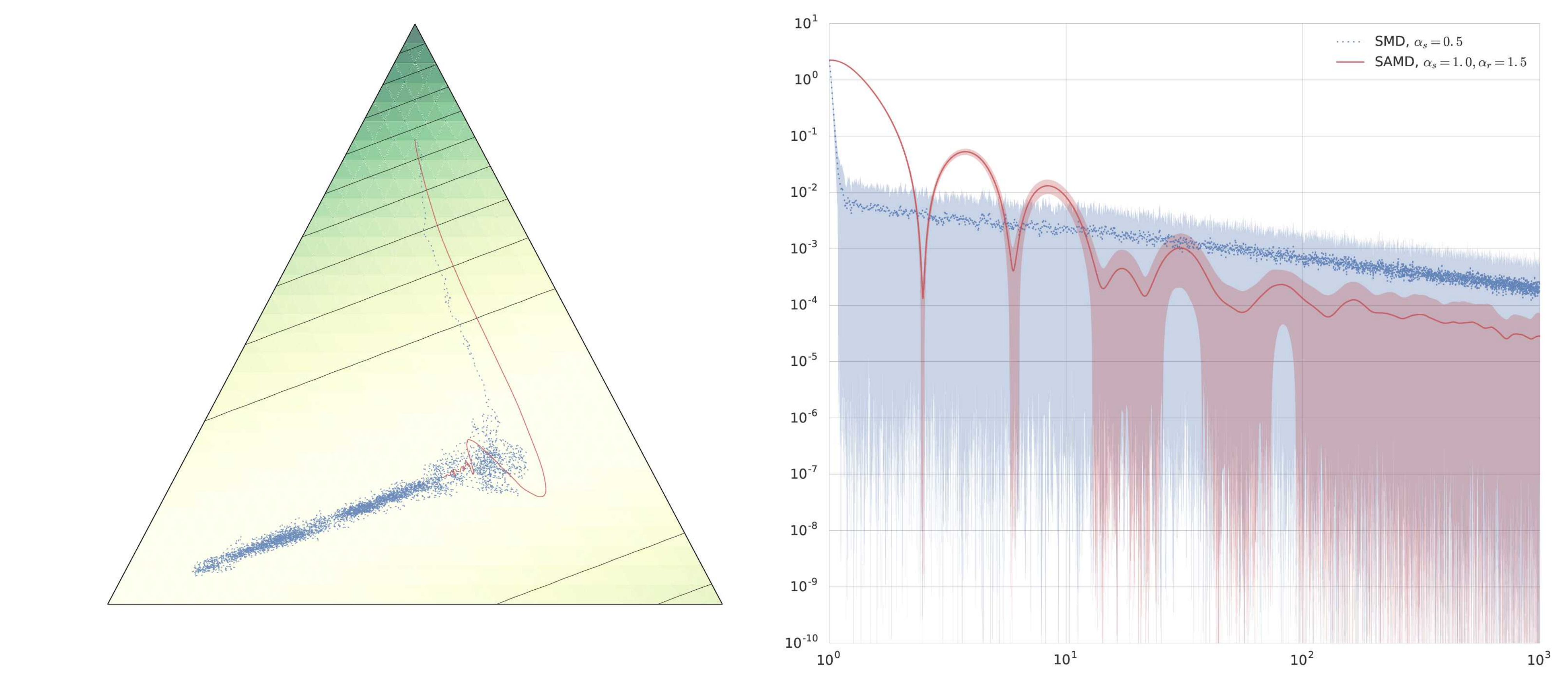}
\includegraphics[width=\wdth]{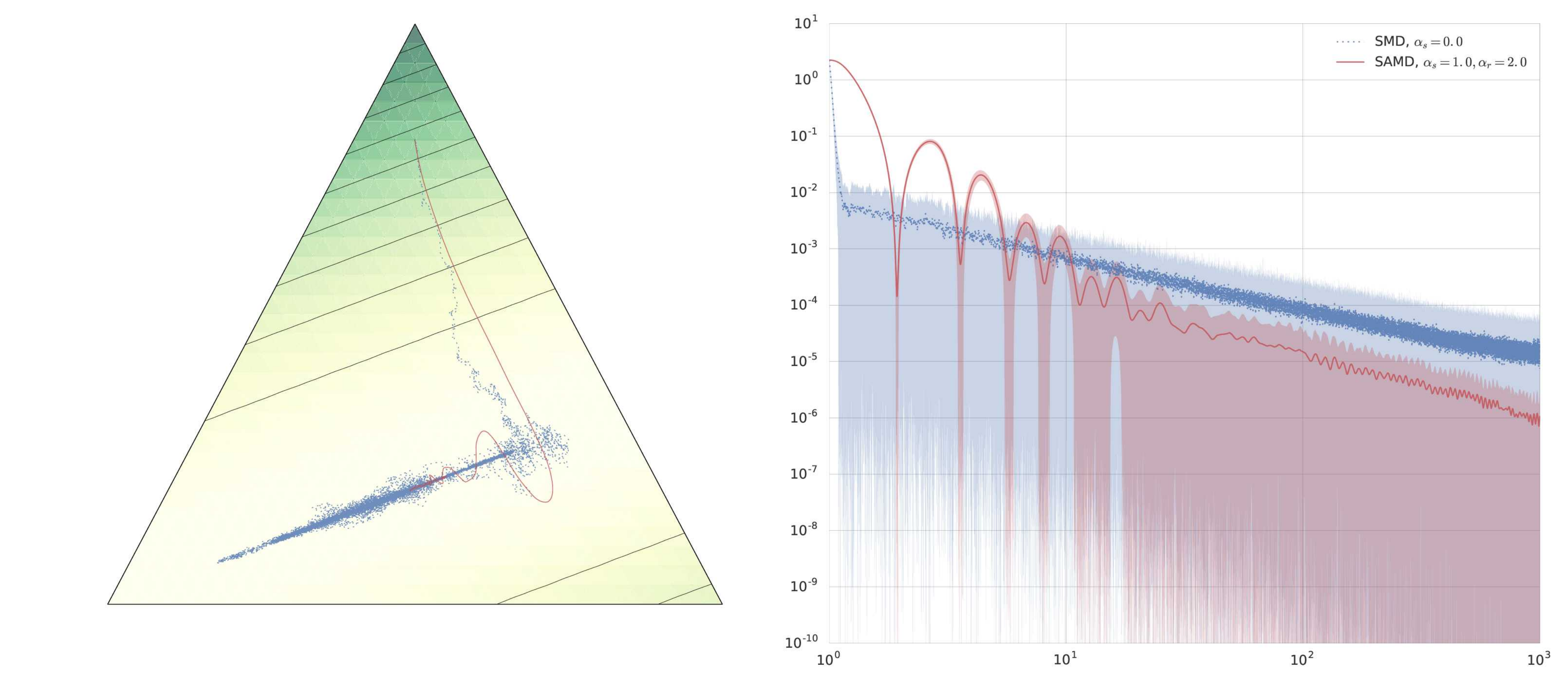}

\caption{Function values $f_2(X(t)) - f_2(x^\star)$, along solution trajectories $X(t)$ of $\SMD$ and $\SAMD$, for different noise regimes $\sigma_*(t) = 10^{-1} t^{\alpha_\sigma}$.}
\label{fig:comparison_quad_over_lin}
\end{figure}
\clearpage

\begin{figure}[h]
\centering
\includegraphics[width=1.\textwidth]{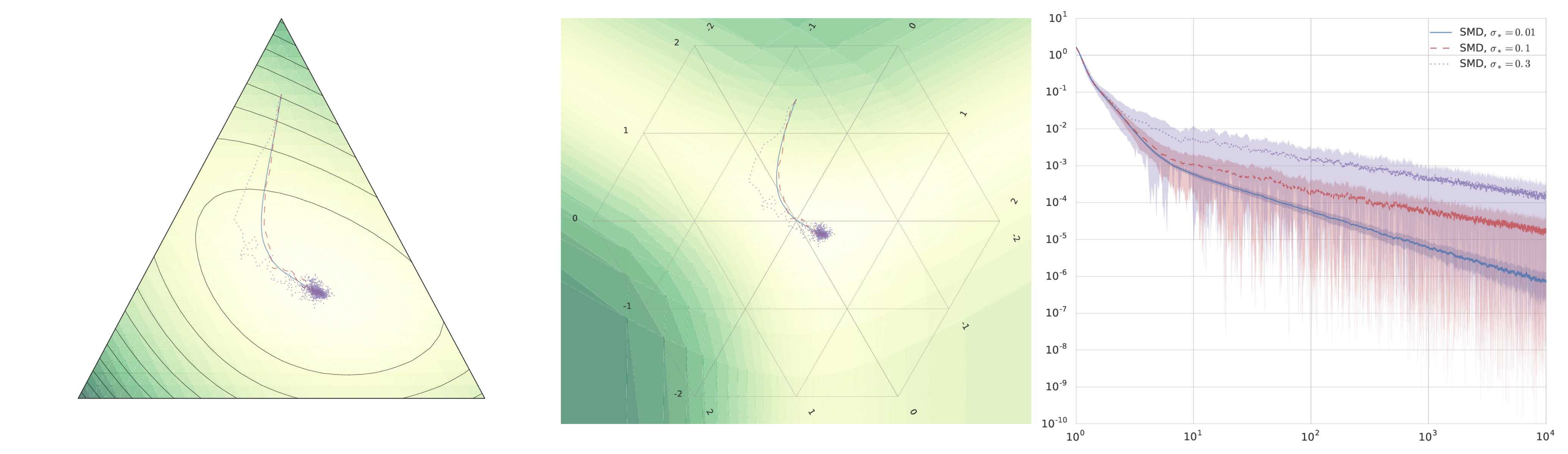}
\includegraphics[width=1.\textwidth]{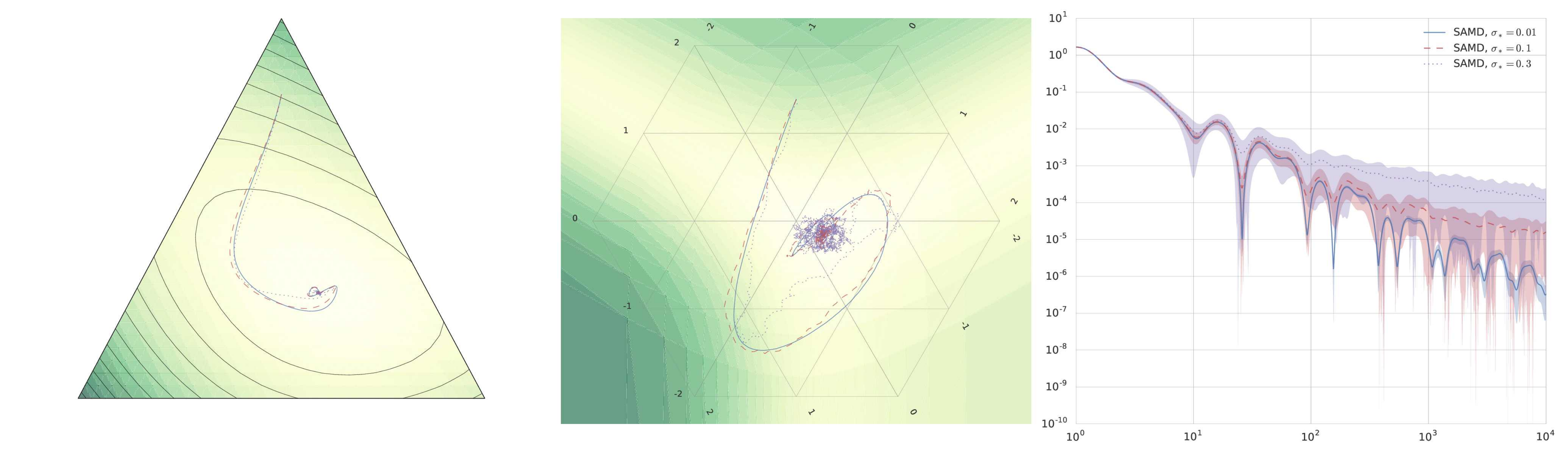}
\caption{Solution trajectories of $\SMD$ (top) and $\SAMD$ (bottom) dynamics for different values of noise covariance $\sigma_*$.}
\label{fig:noise}
\end{figure}

\section{Discussion}
\label{sec:discussion}

Starting from the averaging formulation of accelerated mirror descent in continuous-time, and motivated by stochastic optimization, we formulated a stochastic variant by adding a noise process to the gradient, and studied the resulting SDE. We discussed the role played by each parameter: the dual learning rate $\eta(t)$, the inverse sensitivity parameter $s(t)$, and the noise covariation bound $\sigma_*(t)$. In particular, we showed how the asymptotic bounds on $\sigma_*(t)$ affect the choice of $\eta(t), s(t)$.

Our results show that in the persistent noise regime, thanks to averaging, it is possible to guarantee a.s. convergence, remarkably even when $\sigma_*(t)$ is increasing (as long as $\sigma_*(t) = \Ocal(\sqrt t)$). In the vanishing noise regime, the appropriate choice of $\eta(t), s(t)$ leads to improved convergence rates, e.g. to $\Ocal(t^{\alpha_\sigma - \frac{1}{2}})$ in expectation and $\Ocal(t^{\alpha_\sigma - \frac{1}{2}} \sqrt{\log \log t})$ almost surely, when $\sigma_*(t) = \Ocal(t^{\alpha_\sigma})$ for negative $\alpha_\sigma$. These asymptotic bounds in continuous-time can provide guidelines in setting the different parameters of accelerated stochastic mirror descent.

\subsection*{Effects of time-change}
It is also worth observing that in the deterministic case, one can theoretically obtain arbitrarily fast convergence, through a time change as observed by~\cite{wibisono2016variational} -- a time-change would simply result in using different weights $\eta(t)$ and $a(t)$. In the stochastic dynamics, such a time-change would also lead to re-scaling the noise co-variation, and does not lead to a faster rate. To some extent, adding the noise prevents us from ``artificially'' accelerating convergence using a simple time-change. To illustrate this difference, first consider a time-change in the deterministic case. Let $(x(t), z(t))$ be the unique solution to $\AMD_{\eta, a, 1}$ (where we took $s(t) \equiv 1$ to simplify), and consider a differentiable increasing function of time, $\gamma(t)$. Let $(x', z')$ be defined by $x'(t) = x(\gamma(t))$ and $z'(t) = z(\gamma(t))$. Then $(x', z')$ satisfy the following dynamics:
\[
\begin{cases}
\dot z'(t) = - \dot \gamma(t)\eta(\gamma(t)) \nabla f(x'(t)) \\
\dot x'(t) = \dot\gamma(t)a(\gamma(t)) [\nabla \psi^*(z'(t)) - x'(t)],
\end{cases}
\]
thus $(x', z')$ is the unique solution to $\AMD_{\tilde \eta, \tilde a, 1}$ where $\tilde \eta(t) = \dot \gamma(t)\eta(\gamma(t))$ and $\tilde a(t) = \dot \gamma(t) a(\gamma(t))$, and if $\gamma$ is super linear, $f(x'(t))$ will have a faster convergence rate than $f(x(t))$. Indeed, if $\eta, a, r$ satisfy the conditions of Corollary~\ref{cor:deterministic_rate} (i.e. $a = \eta/r$ and $\eta \geq \dot r$), then $f(x(t)) - f(x^\star) \leq \frac{L(x_0, z_0, t_0)}{r(t)}$; but $\tilde \eta = \dot \gamma \eta \circ \gamma, \tilde a = \dot \gamma a \circ \gamma, \tilde r = r \circ \gamma$ also satisfy the conditions of the corollary, thus
\[
f(x'(t)) - f(x^\star) \leq \frac{L(x_0, z_0, t_0)}{r(\gamma(t))}.
\]

Let us now consider a similar time-change in the stochastic case. Let $(X, Z)$ be the unique (a.s.) continuous solution of $\SAMD_{\eta, \eta/r, s}$, and define $(X', Z')$ by the (differentiable, increasing) time-change $X'(t) = X(\gamma(t))$ and $Z'(t) = Z(\gamma(t))$. Then using the following time-change identity for It\^o martingales (see, e.g. Lemma 2.3 in~\citep{kobayashi2011stochastic}): $\int_{\gamma(t_0)}^{\gamma(t)} \sigma(X(\tau), \tau) dB(\tau) = \int_{t_0}^t \sigma(X(\gamma(\tau)), \gamma(\tau)) dB(\gamma(\tau))$, we have
\[
\begin{cases}
dZ'(t) = -\eta(\gamma(t)) [\nabla f(X'(t)) \dot \gamma(t)dt + \sigma(X'(t), \gamma(t)) dB(\gamma(t))] \\
dX'(t) = a(\gamma(t)) [\nabla \psi^*(Z'(t)/s(\gamma(t))) - X'(t)] \dot \gamma(t) dt,
\end{cases}
\]
which we can rewrite as
\[
\begin{cases}
dZ'(t) = -\tilde \eta(t) d\tilde G(t) \\
dX'(t) = \tilde a(t)[\nabla \psi^*(Z'(t) / \tilde s(t)) - X'(t)]dt.
\end{cases}
\]
where $\tilde \eta, \tilde a$ are as defined in the deterministic case, $\tilde s = s \circ \gamma$, and $\tilde G$ is defined by
\[
d\tilde G(t) = \nabla f(X'(t)) + \frac{\sigma(X'(t), \gamma(t))}{\dot \gamma(t)} dB(\gamma(t)).
\]
In particular, we observe that the noise covariation of $\tilde G$ is
\begin{equation}
\label{eq:quad_cov_Gtilde}
d[\tilde G_i(t), \tilde G_j(t)] = \frac{1}{\dot \gamma(t)^2} \Sigma_{ij}(X'(t), \gamma(t)) \dot \gamma(t) dt
\end{equation}
where we used the fact that the time-changed Brownian motion $B(\gamma(t))$ has quadratic covariation given by  $d[B_i(\gamma(t)), B_i(\gamma(t))] = \dot \gamma(t)dt$.

Comparing the quadratic covariation of $G$ and $\tilde G$ (equations~\eqref{eq:quad_cov_G} and~\eqref{eq:quad_cov_Gtilde} respectively), it becomes apparent that, unless $\gamma$ is the identity, rescaling time also rescales the covariation of the noise (even in the case where $\Sigma(x, t)$ does not depend on $t$, due to the $\dot \gamma(t)$ term). In other words, accelerating time by $\gamma(t)$ would scale down the variance of the gradient by $\dot \gamma(t)$, and $(X', Z')$ would not be a solution to the original problem anymore, unlike in the deterministic case.

\subsection*{Future directions}
Finally, we believe this continuous-time analysis can be extended in several interesting directions. For instance, it will be interesting to carry a similar analysis for other classes of convex problems, such as strongly convex functions, for which we expect faster optimal rates. In the deterministic case, many heuristics have been developed, which are known to empirically improve the convergence rate, such as the restarting heuristics of~\cite{ODonoghue2015adaptive}, the speed restarting of~\cite{su2014differential}, and the adaptive averaging heuristic of~\cite{krichene2016adaptive}. It will be interesting to adapt these heuristics to the stochastic case.

\section*{Acknowledgements}
We gratefully acknowledge the support of the NSF through grant IIS-1619362 and of the Australian Research Council through an Australian Laureate Fellowship (FL110100281) and through the Australian Research Council Centre of Excellence for Mathematical and Statistical Frontiers (ACEMS).


{\bibliographystyle{abbrvnat}
\bibliography{learning}}
\newpage

\appendix
\section{Dynamics of Nesterov's accelerated method}
\label{sec:app:nesterov_ode}
Nesterov's accelerated method~\citep{nesterov1983method} has been shown by~\cite{su2014differential} to be the discretization of the ODE: $\ddot x(t) = -\nabla f(x(t)) - \frac{\alpha}{t} \dot x(t)$ with $\alpha \geq 3$, which describes the motion of a damped non-linear oscillator, driven by the potential $f$, and subject to a viscous friction term $\frac{\alpha}{t} \dot X$. \cite{cabot2009dissipation} had previously studied a general family of such damped oscillators with vanishing friction, but the connection with Nesterov's method was not made until \cite{su2014differential}. Note that the dynamics are unconstrained in this case. This ODE can be recovered as a special case of $\AMD$ by taking the identity as a mirror map (which corresponds to taking $\psi(x) = \frac{1}{2}\|x\|^2_2$): Writing the second equation of $\AMD$ as $\frac{\dot x(t)}{a(t)} = z(t) - x(t)$ (where we took $s \equiv 1$) and taking derivatives, we have
\al{
\frac{1}{a(t)}\ddot x(t) - \frac{\dot a(t)}{a^2(t)} \dot x(t) = \dot z(t) - \dot x(t) = - \eta(t)\nabla f(x(t)) - \dot x(t),
}
i.e.
\[
\ddot x(t) = -\eta(t)a(t) \nabla f(x(t)) - \dot x(t) \frac{a^2(t) - \dot a(t)}{a(t)},
\]
and by taking $r(t) = \frac{t^2}{\beta^2}$, $\eta(t) = \frac{t}{\beta}$, $a(t) = \frac{\beta}{t}$, with $\beta \geq 2$, the ODE becomes
\[
\ddot x(t) = -\nabla f(x(t)) - \dot x(t) \frac{\beta + 1}{t},
\]
which is of the form of Nesterov's ODE up to the reparameterization $\alpha = \beta + 1$. It is easy to verify that the conditions $\eta \geq \dot r$ and $a = \eta / r$ are verified, thus as a consequence of Corollary~\ref{cor:deterministic_rate}, $f(x(t)) - f(x^\star) = \Ocal(1/r(t)) = \Ocal(1/t^2)$, which is analogous to the quadratic rate of Nesterov's accelerated method in discrete time.

\section{Asymptotic rates for (non-accelerated) $\SMD$}
\label{sec:app:smd_rates}
In order to compare the performance of $\SAMD$ to plain (non-accelerated) $\SMD$, we give a brief discussion of the convergence rates for $\SMD$, and derive, in particular, the optimal rate of the sensitivity parameter given a time-varying $\sigma_*(t)$. The results presented in this section are a straightforward extension of the work of~\cite{mertikopoulos2016convergence} to the case where the bound $\sigma_*(t)$ is time-varying. First, we can prove a bound on the rate of change of the energy, similarly to Lemmas~\ref{lem:lyap_bound} and~\ref{lem:lyap_bound_sto}, which leads to the following bound on expected function values.

\begin{theorem}
Suppose that Assumptions~\ref{assumption:lip} and \ref{assumption:volatility} hold. Let $L_{\MD}(z, t) = s(t)D_{\psi^*}(z/s(t), z^\star)$, and let $(X(t), Z(t))$ denote the solution of $\SMD_s$ with initial conditions $(x_0, z_0)$. Then
\al{
\Ebb[ f(\bar X(t))] - f(x^\star)
&\leq \frac{1}{t - t_0} \parenth{L_{\MD}(z_0, t_0) + \psi(x^\star)s(t) + \frac{n L_{\psi^*}}{2} \int_{t_0}^t \frac{\sigma_*^2(\tau)}{s(\tau)}d\tau}.
}
where $\bar X(t) = \frac{1}{t - t_0} \int_{t_0}^t X(\tau)d\tau$.
\end{theorem}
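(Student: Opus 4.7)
The plan is to mirror the Lyapunov analysis of Lemmas~\ref{lem:lyap_bound} and \ref{lem:lyap_bound_sto}, but applied to the simpler energy $L_{\MD}(z,t) = s(t) D_{\psi^*}(z/s(t), z^\star)$ along solutions of $\SMD_s$, then integrate and apply Jensen's inequality to pass from $X(t)$ to its running average $\bar X(t)$. Since there is no primal averaging ODE in $\SMD$, the energy depends on $Z$ only (and $t$), so It\^o's formula produces a clean decomposition into a drift bound, an It\^o correction, and a martingale term.

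First I would compute the three pieces needed by It\^o's formula applied to $L_{\MD}(Z(t), t)$: the gradient $\nabla_z L_{\MD}(z, t) = \nabla \psi^*(z/s(t)) - \nabla \psi^*(z^\star) = X - x^\star$, the Hessian $\nabla^2_{zz} L_{\MD}(z, t) = \tfrac{1}{s(t)} \nabla^2 \psi^*(z/s(t))$, and the partial time derivative $\partial_t L_{\MD}$. For the latter, I would use the Bregman identity \eqref{eq:Bregman_identity} established in the proof of Lemma~\ref{lem:lyap_bound} to obtain
\[
\partial_t L_{\MD}(z,t) = \dot s(t)\bigl(\psi(x^\star) - \psi(\nabla \psi^*(z/s))\bigr) \leq \dot s(t)\, \psi(x^\star),
\]
using non-negativity of $\psi$ and $\dot s \geq 0$. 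This is exactly the step that appeared in \eqref{eq:Bregman_change_bound}, and is the reason the assumptions on $\psi$ and $s$ were imposed in Assumption~\ref{assumption:lip}.

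Next, plugging the $\SMD_s$ dual dynamics $dZ = -\nabla f(X)dt + \sigma(X,t) dB$ into It\^o's formula yields
\[
dL_{\MD} \leq \Bigl[\dot s(t)\psi(x^\star) - \bigl(f(X) - f(x^\star)\bigr) + \frac{n L_{\psi^*}}{2}\,\frac{\sigma_*^2(t)}{s(t)}\Bigr]dt + \langle X - x^\star, \sigma(X,t)\, dB\rangle,
\]
where the $f(X)-f(x^\star)$ drop-out uses convexity $\langle X - x^\star, -\nabla f(X)\rangle \leq -(f(X)-f(x^\star))$, and the trace bound $\tr(\Sigma \nabla^2\psi^*(Z/s)) \leq n L_{\psi^*} \sigma_*^2(t)$ is the same estimate used in Lemma~\ref{lem:lyap_bound_sto}. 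Integrating on $[t_0, t]$, discarding $-L_{\MD}(Z(t), t) \leq 0$ (Bregman divergence is non-negative), taking expectation (the It\^o integral is a martingale, vanishing in expectation), and rearranging gives
\[
\int_{t_0}^t \bigl(\Ebb[f(X(\tau))] - f(x^\star)\bigr) d\tau \leq L_{\MD}(z_0, t_0) + \psi(x^\star) s(t) + \frac{nL_{\psi^*}}{2}\int_{t_0}^t \frac{\sigma_*^2(\tau)}{s(\tau)} d\tau.
\]
Finally, convexity of $f$ and Jensen's inequality give $f(\bar X(t)) \leq \frac{1}{t-t_0}\int_{t_0}^t f(X(\tau))d\tau$, and swapping expectation with the time integral (Fubini) yields the claimed bound after division by $t - t_0$.

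The only non-routine step is the computation of $\partial_t L_{\MD}$, which is the main obstacle: one must carefully expand the derivative of $s(t) D_{\psi^*}(Z/s(t), z^\star)$ and recognize via the Fenchel/Bregman identity $\psi(\nabla\psi^*(z)) + \psi^*(z) = \langle \nabla\psi^*(z), z\rangle$ that the seemingly messy time derivative collapses to the Bregman-type expression $\dot s(\psi(x^\star) - \psi(\nabla\psi^*(z/s)))$. Once this identity is in hand, the remaining steps --- the convexity bound on the drift, the uniform trace bound via $\sigma_*^2$, integration, and Jensen --- are direct adaptations of arguments already used earlier in the paper, and the martingale needs only to vanish in expectation (no iterated-logarithm control is required here, which is why Assumption~\ref{assumption:compact} is not invoked for this statement).
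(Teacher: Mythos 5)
Your proposal is correct and follows essentially the same route as the paper: an It\^o-formula computation for $L_{\MD}(Z(t),t)$ in which the time-derivative term collapses via the Bregman identity~\eqref{eq:Bregman_identity} to $\dot s(\psi(x^\star)-\psi(\nabla\psi^*(z/s)))\leq \dot s\,\psi(x^\star)$, convexity handles the drift, the trace bound handles the It\^o correction, and integration plus taking expectations plus Jensen's inequality finishes the argument. The paper states this more tersely (``following the proof of Lemma~\ref{lem:lyap_bound_sto}''), but the steps you spell out are exactly the ones it relies on.
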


\begin{proof}
Following the proof of Lemma~\ref{lem:lyap_bound_sto}, we have
\al{
dL_{\MD}(Z, t)
&\leq \braket{\nabla \psi^*(Z/s) - x^\star}{ - \nabla f(X)} + \psi(x^\star)\dot s
+ \braket{V_{\MD}}{dB} + \frac{n L_{\psi^*} \sigma_*^2}{2s} \\
&\leq -(f(X) - f(x^\star)) + \psi(x^\star) \dot s + \braket{V_{\MD}}{dB} + \frac{n L_{\psi^*} \sigma_*^2}{2s}.
}
where we defined $V_{\MD}(t) = \sigma(X(t), t)^T(\nabla \psi^*(Z/s) - \nabla \psi^*(z^\star))$. Rearranging and integrating this bound, we have
\al{
\frac{1}{t - t_0} \int_{t_0}^t &(f(X(\tau)) - f(x^\star))d\tau \\
&\leq \frac{1}{t - t_0} \parenth{L_{\MD}(z_0, t_0) + \psi(x^\star)s(t) + \int_{t_0}^t \braket{V(\tau)}{dB(\tau)} + \frac{n L_{\psi^*}}{2} \int_{t_0}^t \frac{\sigma_*^2(\tau)}{s(\tau)}d\tau}.
}
Taking expectations, the martingale term $\int_{t_0}^t \braket{V_{\MD}(\tau)}{dB(\tau)}$ vanishes, and we conclude using the fact that $f(\bar X(t)) \leq \frac{1}{t-t_0} \int_{t_0}^t f(X(\tau))d\tau$ by Jensen's inequality.
\end{proof}

Given this bound, we can adapt the asymptotic rate of $s(t)$ to the rate of $\sigma_*(t)$. For example, if $\sigma_*(t) = \Ocal(t^{\alpha_\sigma})$ is given, and $s(t) = \Theta(t^{\alpha_s})$, $\alpha_s \geq 0$ is to be chosen, we have
\[
\Ebb[ f(\bar X(t))] - f(x^\star) = \Ocal(t^{\alpha_s-1} + t^{2\alpha_\sigma - \alpha_s}),
\]
and the value of $\alpha_s$ which minimizes the asymptotic rate is $\alpha_s = \max(0, \alpha_\sigma+\frac{1}{2})$ (note that $\alpha_s$ is, by assumption, non-negative since the inverse sensitivity $s$ is by assumption non-decreasing), which results in the following rate
\begin{corollary}
\label{cor:smd_rate}
Suppose that Assumptions~\ref{assumption:lip} and \ref{assumption:volatility} hold. Suppose that $\sigma_*(t) = \Ocal(t^{\alpha_\sigma})$, $\alpha_\sigma < \frac{1}{2}$. Then let $s(t) = t^{\max(0, \alpha_\sigma+\frac{1}{2})}$, and $(X(t), Z(t))$ be the unique solution of $\SMD_{s}$. Then
\[
\Ebb[ f(\bar X(t))] - f(x^\star) = \Ocal(t^{\max(\alpha_\sigma - \frac{1}{2}, -1)}).
\]
\end{corollary}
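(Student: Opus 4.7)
The plan is to directly invoke the preceding theorem (the SMD energy bound) and perform elementary asymptotic analysis of its three terms under the prescribed choice $s(t)=t^{\alpha_s}$ with $\alpha_s=\max(0,\alpha_\sigma+\tfrac12)$. Concretely, the theorem gives
\[
\Ebb[f(\bar X(t))] - f(x^\star) \leq \frac{1}{t-t_0}\left(L_{\MD}(z_0,t_0) + \psi(x^\star)\,s(t) + \frac{nL_{\psi^*}}{2}\int_{t_0}^t \frac{\sigma_*^2(\tau)}{s(\tau)}\,d\tau\right),
\]
so it suffices to bound each of the three terms divided by $t-t_0=\Theta(t)$. The first term is trivially $\Ocal(1/t)$. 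The second is $\Ocal(t^{\alpha_s-1})$. For the third, one uses $\sigma_*^2(\tau)=\Ocal(\tau^{2\alpha_\sigma})$, so the integrand is $\Ocal(\tau^{2\alpha_\sigma-\alpha_s})$ and standard integration (splitting on whether the exponent is $>-1$, $=-1$, or $<-1$) controls it.

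First I would treat the case $\alpha_\sigma \geq -\tfrac12$, so $\alpha_s=\alpha_\sigma+\tfrac12$ and the exponent $2\alpha_\sigma-\alpha_s = \alpha_\sigma-\tfrac12$, which is strictly between $-1$ and $0$ (using the hypothesis $\alpha_\sigma<\tfrac12$ and the case assumption). Then $\int_{t_0}^t\tau^{\alpha_\sigma-1/2}d\tau = \Ocal(t^{\alpha_\sigma+1/2})$, which divided by $t$ gives $\Ocal(t^{\alpha_\sigma-1/2})$. The second term yields $\Ocal(t^{\alpha_\sigma-1/2})$ as well, and both dominate the $\Ocal(1/t)$ contribution since $\alpha_\sigma-\tfrac12\geq -1$. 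Hence the total is $\Ocal(t^{\alpha_\sigma-1/2})$, matching the stated $\Ocal(t^{\max(\alpha_\sigma-1/2,-1)})$.

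Next I would handle the case $\alpha_\sigma<-\tfrac12$, so $\alpha_s=0$ and $s\equiv1$. The second term becomes $\Ocal(1/t)$, and the third involves $\int_{t_0}^t\tau^{2\alpha_\sigma}d\tau$, which converges (since $2\alpha_\sigma<-1$), giving an $\Ocal(1)$ contribution that becomes $\Ocal(1/t)$ after division. All three terms are then $\Ocal(1/t)$, and $\max(\alpha_\sigma-\tfrac12,-1)=-1$ in this regime, so again the bound matches. Combining the two cases gives the claim.

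There is no genuine obstacle here: the preceding theorem does all the real analytic work (It\^o formula, Bregman identity, vanishing of the martingale term under expectation, Jensen's inequality), and all that remains is bookkeeping of exponents. The only mild subtlety is the boundary value $\alpha_\sigma=-\tfrac12$, where the integral picks up a $\log t$ factor; this can be absorbed into the $\Ocal(t^{-1})$ bound since $\max(\alpha_\sigma-\tfrac12,-1)=-1$ admits any slower-than-$t^{-1+\epsilon}$ rate, so I would simply note that $\log t / t = \Ocal(t^{-1+\epsilon})$ for any $\epsilon>0$, or verify that the union of both cases covers this boundary cleanly via the $\max$ expression.
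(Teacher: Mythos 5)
Your proposal is correct and follows essentially the same route as the paper: plug the prescribed $s(t)$ into the preceding theorem's bound and track exponents, with the paper simply recording the result as $\Ocal(t^{\alpha_s-1}+t^{2\alpha_\sigma-\alpha_s})$ and optimizing over $\alpha_s$. Your explicit case split on the sign of $2\alpha_\sigma-\alpha_s+1$ and your remark about the $\log t$ factor at the boundary $\alpha_\sigma=-\tfrac12$ are actually more careful than the paper, which glosses over that boundary case entirely.
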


In particular, $\SMD$ cannot adapt to a noise decay that is faster than $\Ocal(t^{-\frac{1}{2}})$, as opposed to $\SAMD$. In the regime where the noise decay is slower than $t^{-\frac{1}{2}}$, comparing this result to Corollary~\ref{cor:sto_rate}, it may appear that $\SAMD$ and $\SMD$ can both achieve the same rate, $\Ocal(t^{\alpha_\sigma - \frac{1}{2}})$, although in practice, $\SAMD$ exhibits more desirable properties, and appears to be numerically more robust to higher levels of noise, and larger discretization step sizes, as discussed in Section~\ref{sec:numerics}.

\end{document}